\newcommand{\arxivorpnas}[2]{#1}
\DeclareMathOperator{\grad}{grad}
\DeclareMathOperator{\divv}{div}
\newcommand{\ee}{\mathrm{e}}
\newcommand{\ii}{\mathrm{i}}
\newcommand{\pair}[1]{\left\langle #1 \right\rangle}
\newcommand{\inner}[1]{\langle\!\langle #1 \rangle\!\rangle}
\providecommand{\norm}[1]{\lVert#1\rVert}
\providecommand{\abs}[1]{\lvert#1\rvert}
\newcommand{\ud}{\mathrm{d}}
\newcommand{\pd}{\partial}
\newcommand{\RR}{{\mathbb R}}
\newcommand{\CC}{{\mathbb C}}
\newcommand{\vol}{\mu}
\newcommand{\met}{\mathsf{g}}
\newcommand{\Diff}{\mathrm{Diff}}
\newcommand{\Xcal}{\mathfrak{X}}
\newcommand{\Diffvol}{{\Diff_\vol}}
\newcommand{\LieD}{\mathcal{L}}
\newcommand{\Dens}{\mathrm{Dens}}
\DeclareMathOperator{\Ad}{Ad}
\newcommand*\SU{\mathrm{SU}}
\newcommand{\Met}{\mathsf{G}}
\newcommand{\MetW}{ {\bar{\mathsf{G}}} }
\newcommand{\MetF}{ {{\mathsf{G}}} }
\renewcommand{\Im}{\mathrm{Im}}
\renewcommand{\Re}{\mathrm{Re}}
\newcommand{\marginnote}[1]
{%\mbox{}\marginpar{\center{\hspace{0pt}\tiny{\bf#1}}}
}
\newcounter{gm}
\newcounter{bk}
\newcommand{\bk}[1]
{\stepcounter{bk}$^{\bf BK\thebk}$%
\footnotetext{\hspace{-3.7mm}$^{\blacksquare\!\blacksquare}$
{\bf BK\thebk:~}#1}}
\newcounter{km}
\title[Geometry of the Madelung transform]{Geometry of the Madelung transform}
\author{Boris Khesin}
\address{Department of Mathematics, University of Toronto, Toronto, ON M5S 2E4, Canada}
\email{khesin@math.toronto.edu}
\author{Gerard \texorpdfstring{Misio\l ek}{Misiolek}}
\address{Department of Mathematics, University of Notre Dame, Notre Dame, IN 46556, USA}
\email{gmisiole@nd.edu}
\author{Klas Modin}
\address{Department of Mathematical Sciences, Chalmers University of Technology and University of Gothenburg, SE-412 96 Gothenburg, Sweden}
\email{klas.modin@chalmers.se}
\begin{document} 
%\date{~} 
\begin{abstract} 
The Madelung transform is known 
to relate Schr\"odinger-type equations in quantum mechanics and the Euler equations for 
baro\-tropic-type fluids. We prove that, more generally, the Madelung transform is a K\"ahler map 
(i.e.\ a symplectomorphism and an isometry) between the space of wave functions and the cotangent bundle to the density space  equipped with the Fubini-Study metric and  the Fisher-Rao information metric, respectively.
We also show that Fusca's momentum map property  of the Madelung transform is a manifestation of the general 
approach via reduction for semi-direct product groups. 
Furthermore, the Hasimoto transform for the binormal equation turns out to be 
the 1D case of the Madelung transform, while its higher-dimensional version is related to 
the problem of conservation of the Willmore energy in binormal flows.  
\end{abstract} 

\maketitle 

\tableofcontents 
%\newpage 

\section{Introduction} % (fold)
\label{sec:introduction}

In 1927 E.\ Madelung~\cite{Ma1927} introduced a transformation, which now bears his name, in order to give 
an alternative formulation of the linear Schr\"odinger equation for a single particle 
moving in an electric field as a system of equations describing the motion of 
a compressible inviscid fluid. 
Since then other derivations have been proposed in the physics literature primarily in connection 
with various models in quantum hydrodynamics and optimal transport, 
cf.\ \cite{Ne2011, Re2012, Mo2015b}. 

In this paper we focus on the geometric aspects of Madelung's construction and prove that 
the Madelung transform possesses a number of surprising properties. 
It turns out that in the right setting it can be viewed as a symplectomorphism, an isometry, a K\"ahler morphism 
or a generalized Hasimoto map. 
%(Sections \ref{sec:madelung},\ref{sub:kahler_properties_of_madelung}). 
Furthermore, geometric properties of the Madelung transform are best understood not in the setting of 
the $L^2$-Wasserstein geometry but (an infinite-dimensional analogue of) 
the Fisher-Rao information geometry---the canonical Riemannian geometry of 
the space of probability densities. 
These results can be summarized in the following theorem (a joint version of Theorems \ref{thm:madelung_symplectomorphism} and \ref{thm:madelung_isometry} below). 

\medskip

\noindent{\bf Main Theorem.} {\it 
The Madelung transform is a K\"ahler morphism between the cotangent bundle of the space of 
smooth probability densities, equipped with the (Sasaki)-Fisher-Rao metric, and an open subset 
of the infinite-dimensional complex projective space of smooth wave functions, 
equipped with the Fubini-Study metric.}
% Let $\Dens(M)$ be the space of smooth probability densities on a compact manifold $M$.
% Then the Madelung transform is a K\"ahler morphism between the cotangent bundle $T^*\Dens(M)$, 
%equipped with the (Sasaki)-Fisher-Rao metric, and an open subset (in the Fr\'echet topology of smooth 
%functions) of  the infinite-dimensional complex projective space $PH^s(M,\CC)$, equipped with 
%the Fubini-Study metric.
%\end{theorem}  
% 
\medskip

The statement is valid in both the Sobolev topology of $H^s$-smooth functions and Fr\'echet topology of $C^\infty$-smooth functions.
In a sense the Madelung transform resembles the passage from Euclidean to polar coordinates 
in the infinite-dimensional space of wave functions, where the modulus is a probability
density and the phase corresponds to fluid's vector field. 
The above theorem shows that, after projectivization, this transform relates not only 
equations of hydrodynamics and those of quantum physics, 
but the corresponding symplectic structures underlying them as well. 
Surprisingly, it also turns out to be an isometry between two well-known Riemannian metrics 
in geometry and statistics.

This result reveals tighter links between hydrodynamics, quantum information geometry 
and geometric quantum mechanics. 
Important in our constructions is a reformulation of Newton's equations on these spaces of diffeomorphisms 
and probability densities. This reformulation can be viewed as an extension of Arnold's formalism 
for the Euler equations of ideal hydrodynamics 
%in which fluid motions correspond to geodesics of 
%a right-invariant $L^2$ metric on the group of volume-preserving diffeomorphisms of the fluid domain 
\cite{Ar1966, ArKh1998}. 
%The metric invariance above and this result originate from Arnold, who used the $L^2$-metric restricted to 
%$\Diffvol(M)$ to show that the geodesic equation on $\Diffvol(M)$, expressed in Eulerian coordinates, is the classical Euler equations of an incompressible fluid \cite{Ar1966}. 
%In turn, this gave rise to the field of \emph{topological hydrodynamics} \cite{ArKh1998}.
%One may think of the framework here (often refereed to as \emph{Otto's calculus}), as being orthogonal to Arnold's viewpoint, namely instead of restricting to the vertical directions, ``parallel to" $\Diffvol(M)$, we consider the horizontal directions and use that any structure on $\Diff(M)$ invariant under the right action of $\Diffvol(M)$ induces the corresponding structure on $\Dens(M)$. 

% \medskip

%%%%%%%% 
%\begin{remark}
\smallskip

Our first motivation comes from hydrodynamics where 
groups of diffeomorphisms arise as configuration spaces for flows of compressible and incompressible fluids 
in a domain $M$ (typically, a compact connected Riemannian manifold with a volume form $\vol$).
When equipped with a metric given at the identity diffeomorphism by the $L^2$ inner product 
(corresponding essentially to the kinetic energy) the geodesics of the group $\Diff(M)$ 
of smooth diffeomorphisms of $M$ describe motions of the gas of noninteracting particles in $M$ 
whose velocity field $v$ satisfies the inviscid Burgers equation 
$$ 
\dot v+\nabla_v v=0. 
$$ 
On the other hand, when restricted to the subgroup $\Diff_\vol(M)$ of volume-preserving diffeomorphisms,
the $L^2$-metric becomes right-invariant, and its geodesics can be viewed as motions of 
an ideal (that is, incompressible and inviscid) fluid in $M$ whose velocity field satisfies 
the incompressible Euler equations 
$$ 
\begin{cases} 
\begin{aligned} 
&\dot v+\nabla_v v=-\nabla p 
\\ 
&{\rm div}\, v=0. 
\end{aligned} 
\end{cases} 
$$ 
Here the pressure gradient $\nabla p$ is defined uniquely  by 
the divergence-free condition on the velocity field $v$ and can be viewed as a constraining force 
on the fluid. 
What we describe below can be regarded as an extension of this framework 
to various equations of compressible fluids, where the evolution of density becomes foremost important.
%Arnold \cite{Ar1966} showed that solutions of the Euler equations can be regarded 
%as an equation of the geodesic flow on the group $\Diffvol(M)$ 
%of volume-preserving diffeomorphisms, while the gradient of pressure,
%being orthogonal to divergence-free fields, can be regarded as the 
%reaction force, constraining  solutions to the submanifold $\Diffvol(M)\subset\Diff(M)$.
%\end{remark}

%\begin{remark}\label{sect:otto}
\smallskip

Our second motivation is to study the geometry of the space of densities. 
Namely, consider the projection $\pi \colon \Diff(M) \to \Dens(M)$ of the full diffeomorphism group $\Diff(M)$
onto the space $\Dens(M)$  of normalized smooth densities on $M$. 
The fiber over  a density $\nu$ consists of all diffeomorphisms $\phi$ that push forward 
the Riemannian volume form $\vol$ to $\nu$, that is, $\phi_*\vol=\nu$. 
It was shown by Otto \cite{Ot2001} that $\pi$ is a Riemannian submersion between $\Diff(M)$ 
equipped with the $L^2$-metric and $\Dens(M)$ equipped with the (Kantorovich-Wasserstein) metric 
used in the optimal mass transport. 
More interesting for our purposes is that a Riemannian submersion arises also when 
$\Diff(M)$ is equipped with a right-invariant homogeneous Sobolev $\dot{H}^1$-metric 
and $\Dens(M)$ with the Fisher-Rao metric which plays an important role in geometric statistics, see~\cite{KhLeMiPr2013}. 

 \smallskip

In the present paper we  prove  the K\"ahler property of the Madelung transform thus establishing 
a close relation of  the cotangent space of the space of densities and the projective space 
of wave functions on $M$. Furthermore, this transform 
also identifies many Newton-type equations on these spaces that are naturally related to 
equations of fluid dynamics. 

\smallskip

As an additional perspective, the connection between equations of quantum mechanics and hydrodynamics described below might shed some light on the hydrodynamical quantum analogs studied in \cite{Co_et_al_2005, Bush2010}: the motion of bouncing droplets in certain vibrating liquids manifests
many properties of quantum mechanical particles.
While bouncing droplets have a dynamical boundary condition with changing topology of the domain every period, apparently a more precise description of the phenomenon should involve a certain averaging procedure for the hydrodynamical system in a periodically changing domain.
Then the droplet--quantum particle correspondence could be a combination of
the averaging and Madelung transform.

\medskip

\textbf{Acknowledgements.} B.K. is grateful to the IHES in Bures-sur-Yvette  and the
Weizmann Institute in Rehovot for their support and kind
hospitality. B.K. was also partially supported by an NSERC research
grant. Part of
this work was done while G.M. held the Ulam Chair visiting
Professorship in University of Colorado at Boulder.
K.M. was supported by EU Horizon 2020 grant No 691070, by the Swedish Foundation for International Cooperation in Research and Higher Eduction (STINT) grant No PT2014-5823, and by the
Swedish Research Council (VR) grant No 2017-05040.

%%%%%%%%%%%%%%%%%%%%%%%%%%%%%%
\section{Madelung transform as a symplectomorphism} 
\label{sec:madelung}

In this section we show that the Madelung transform induces a symplectomorphism between 
the cotangent bundle of smooth probability densities and the projective space of smooth 
non-vanishing complex-valued wave  functions.
\begin{definition}
Let $\vol$ be a (reference) volume form on $M$ such that $\int_M \vol = 1$.
The space of probability densities on a compact connected oriented $n$-manifold $M$ is 
\begin{equation}
	% \Dens^s(M) = \Big\{ \varrho\in \Omega^n(M)\mid \varrho > 0, \; \int_M \varrho = 1 \Big\},
	\Dens^s(M) = \Big\{ \rho\in H^s(M)\mid \rho > 0, \; \int_M \rho\,\vol = 1 \Big\},
\end{equation} 
where $H^s(M)$ denotes the space of real-valued functions on $M$ of Sobolev class $H^s$ with $s > n/2$ 
(including the case $s=\infty$ corresponding to $C^\infty$ functions).\footnote{From 
a geometric point of view it is more natural to define densities as volume forms instead of functions.
	This way, they become independent of the reference volume form $\vol$.
	However, since some of the equations studied in this paper depends on the reference volume form $\vol$ anyway, it is easier to define densities as functions to avoid notational overload.}

% 
% \todo[author=KM]{Actually, we don't need compactness of $M$ to define this, but maybe we should stay with the compact case.}
\end{definition}

The space $\Dens^s(M)$ can be equipped in the standard manner with the structure of a smooth infinite-dimensional 
manifold (Hilbert, if $s<\infty$ or Fr\'echet, if $s=\infty$).
It is an open subset of an affine hyperplane in $H^s(M)$. 
%\todo[author=KM]{I think we need to be more precise here, since it is not $\Dens(M)$ 
%that is a Sobolev manifold. Also, shouldn't we say 'Banach (or Hilbert) manifold'?}
Its tangent bundle is trivial 
\begin{equation*}
T\Dens^s(M)=\Dens^s(M)\times H^s_0(M) 
\end{equation*} 
where 
$H^s_0(M) = \big\{ c\in H^s(M) \mid \int_M c\,\vol = 0 \big\}$. 
Likewise, the (regular part of the) co-tangent bundle is 
\begin{equation*} 
T^*\Dens^s(M)=\Dens^s(M)\times H^s(M)/\RR ,
\end{equation*} 
where $H^s(M)/\RR$ is the space of cosets $[\theta]$ of functions $\theta$ modulo additive constants 
$[\theta]=\{\theta+c~|~c\in \RR\}$. 
The pairing is given by 
\begin{equation*}
	T_\rho\Dens^s(M)\times T_\rho^*\Dens^s(M) \ni (\dot\rho,[\theta]) \mapsto \int_M \theta\dot\rho\,\vol. 
\end{equation*}
It is independent of the choice of $\theta$ in the coset $[\theta]$ since $\int_M\dot\rho \,\vol=0$.

% \todo[author=KM]{Should we specify the pairing?}
% 
% where $H^s(M)$ denotes the space of (real-valued) functions on $M$ of Sobolev class $H^s$.\
% 
% 
\begin{definition}\label{def:madelung} 
The \textit{Madelung transform} is a map ${\bf \Phi}$ which to any pair of functions 
$\rho\colon M\to \mathbb{R}_{>0}$ and $\theta\colon M\to \RR$ 
associates a complex-valued function 
\begin{equation}\label{eq:madelung_def}
	{\bf\Phi}\colon (\rho,\theta)\mapsto \psi \coloneqq \sqrt{\rho \ee^{\ii\theta}} = \sqrt{\rho}\,\ee^{\ii\theta/2} \, .
\end{equation}	
\end{definition} 
\begin{remark} 
The latter expression defines a particular branch 
of the square root $\sqrt{\rho \ee^{\ii\theta}}$. 
The map ${\bf\Phi}$ is unramified, since $\rho$ is strictly positive.
Note that this map is not injective because $\theta$ and $\theta+4\pi k $ have the same image. 
% The would-be inverse map to ${\bf\Phi}$ is rewriting complex-valued function $\psi^2$ in polar coordinates, where the argument is naturally multivalued.
Despite this fact, 
%that the Madelung map \eqref{eq:madelung_def} is not pointwise injective, 
there is, as we shall see next, a natural geometric setting in which the Madelung transform  \eqref{eq:madelung_def} becomes invertible. 
\end{remark} 
% %%%%%%%%%%%%%%%%%%%%%%%%%%%%%%%%%%%%%

\begin{figure}
	\includegraphics[width=0.8\textwidth]{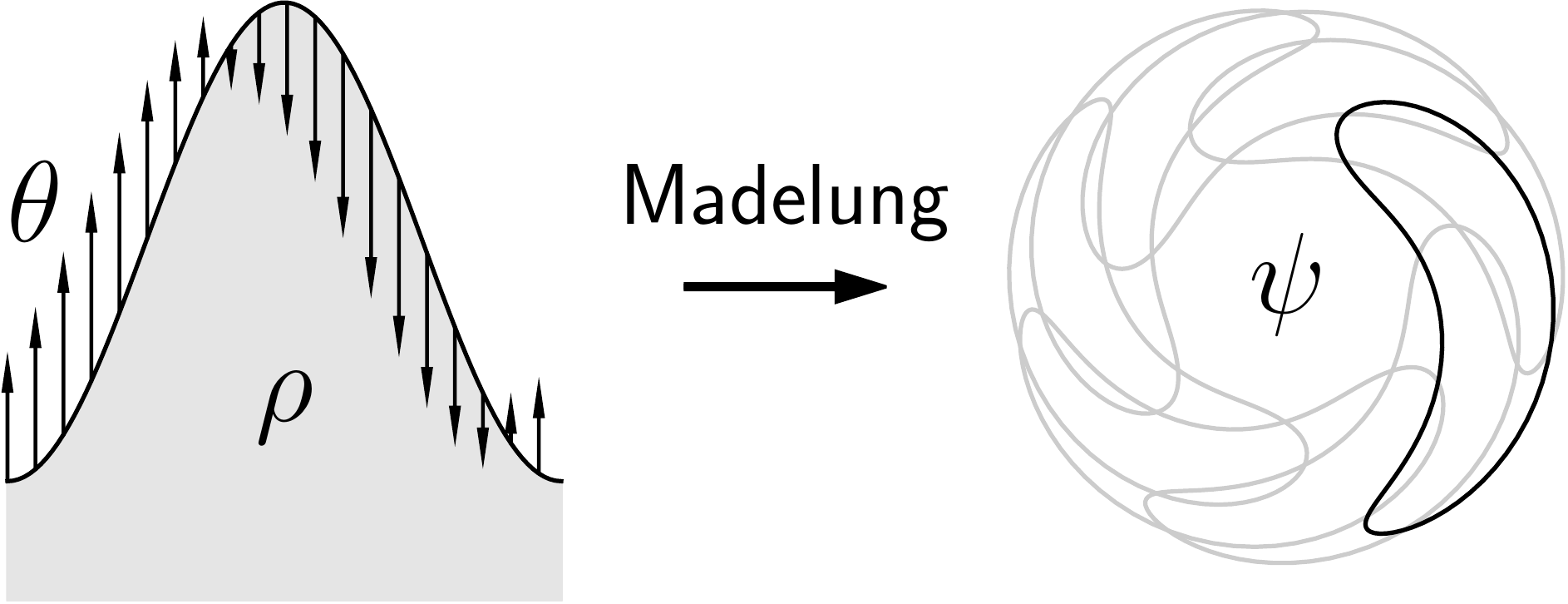}
	\caption{Illustration of the Madelung transform $\bf\Phi$ on $S^1$. For $x\in S^1$, a probability density $\rho(x)>0$ and a dual infinitesimal probability density $\theta(x)$ are mapped to a wave function $\psi(x)=\sqrt{\rho \ee^{\ii \theta}}\in\CC$, which is defined up to rigid rotations of the complex plane.}
\end{figure}

%%%%%%%%%%%%%%%%%%%%%%%%%%%%%%%%%%%
\subsection{Symplectic properties} %of the Madelung transform} 
Let $H^s(M,\CC)$ denote the space of complex-valued functions of Sobolev class on a compact connected manifold $M$
and let $\mathbb PH^s(M,\CC)$ denote the corresponding complex projective space. 
% Recall that $\mathbb PH^s(M,\CC)$ is an infinite dimensional (weak) Kähler manifold.
Its elements can be represented as cosets of the unit $L^2$-sphere of complex functions 
$$ 
[\psi]=\Big\{\ee^{\ii \tau}\psi ~|~\psi\in  H^s(M,\CC),\, \norm{\psi}_{L^2}=1 ~\text{and~} \tau\in\RR \Big\}. 
$$ 

If $\tilde{\psi}\in [\psi]$ is nowhere vanishing then every other representative in the coset $[\psi]$
is nowhere vanishing as well. 
In particular, $\mathbb PH^s(M,\CC\backslash \{0\})$ is an open subset and hence a submanifold of 
$\mathbb PH^s(M,\CC)$. 
\begin{theorem}\label{thm:madelung_symplectomorphism}
% Let $M$ be closed connected and oriented.
The Madelung transform \eqref{eq:madelung_def} induces a map 
\begin{equation}\label{eq:madelung_geometric} 
{\bf\Phi}\colon T^*\Dens^s(M)\to \mathbb PH^{s}(M,\CC\backslash \{0\}) 
\end{equation} 
% \todo[inline,author=KM]{Maybe say that $M$ is connected. Otherwise it is not true.}
% 
which, up to scaling by $4$, is a symplectomorphism\footnote{In the Fr\'echet topology of smooth functions if $s=\infty$.} 
with respect to the canonical symplectic structure of $T^*\Dens^s(M)$ %(scaled by $1/4$) 
and the symplectic form of the K\"ahler structure on $\mathbb PH^s(M,\CC)$.
	% For a connected manifold $M$  the Madelung transform \eqref{eq:madelung_def} is a symplectomorphism between $T^*\Dens(M)$ and $\mathbb PH^s(M,\CC\backslash\{0\})$.
	% The inclusion $\mathbb PH^s(M,\CC\backslash\{0\})\hookrightarrow PL^2(M,\CC)$ is a Poisson map, i.e., 
	%  (???).\todo{We would like to discuss symplectic structures on Sobolev spaces. When a smaller space is inside a larger space, we just restrict the symplectic structure to that subspace.}
\end{theorem}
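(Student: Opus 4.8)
The plan is to compute both symplectic forms explicitly in the natural coordinates and verify that the Madelung transform intertwines them up to the factor $4$. First I would write down the canonical symplectic form on $T^*\Dens^s(M)$. Using the trivialization $T^*\Dens^s(M) = \Dens^s(M) \times H^s(M)/\RR$, a point is a pair $(\rho,[\theta])$ and the tautological one-form is $\alpha_{(\rho,[\theta])}(\dot\rho,[\dot\theta]) = \int_M \theta\,\dot\rho\,\vol$, so the canonical symplectic form is
\begin{equation*}
\omega_{\mathrm{can}}\big((\dot\rho_1,[\dot\theta_1]),(\dot\rho_2,[\dot\theta_2])\big) = \int_M \big(\dot\theta_1\,\dot\rho_2 - \dot\theta_2\,\dot\rho_1\big)\,\vol.
\end{equation*}
This is well-defined modulo additive constants in $\theta$ precisely because the tangent vectors $\dot\rho_i$ have zero mean.

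Next I would set up the Fubini–Study structure on $\mathbb P H^s(M,\CC)$. The ambient Hermitian $L^2$ inner product on $H^s(M,\CC)$ is $\langle\!\langle \psi_1,\psi_2\rangle\!\rangle = \int_M \psi_1\overline{\psi_2}\,\vol$, whose imaginary part gives a symplectic form on the space of wave functions, and the Fubini–Study form $\omega_{FS}$ on the projectivization is obtained by restricting to the unit sphere and reducing by the $S^1$-action $\psi\mapsto \ee^{\ii\tau}\psi$. Concretely, at a representative $\psi$ with $\norm{\psi}_{L^2}=1$, horizontal tangent vectors are those $L^2$-orthogonal to $\ii\psi$, and on such vectors $\omega_{FS}(\delta\psi_1,\delta\psi_2) = \Im\int_M \delta\psi_1\,\overline{\delta\psi_2}\,\vol$. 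The key reduction point is that $\bf\Phi$ lands in the nonvanishing wave functions, which after normalization sit inside the unit sphere since $\int_M \abs{\psi}^2\vol = \int_M \rho\,\vol = 1$.

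The central computation is then to push the coordinate variations through $\bf\Phi$. Differentiating $\psi = \sqrt\rho\,\ee^{\ii\theta/2}$ gives
\begin{equation*}
\delta\psi = \Big(\frac{\delta\rho}{2\rho} + \frac{\ii}{2}\,\delta\theta\Big)\psi,
\end{equation*}
so that for two variations one finds $\delta\psi_1\,\overline{\delta\psi_2} = \big(\tfrac{\delta\rho_1}{2\rho}+\tfrac{\ii}{2}\delta\theta_1\big)\overline{\big(\tfrac{\delta\rho_2}{2\rho}+\tfrac{\ii}{2}\delta\theta_2\big)}\abs{\psi}^2$, and I would extract its imaginary part. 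Since $\abs{\psi}^2 = \rho$, the real cross-terms cancel and the imaginary part contributes $\tfrac{1}{4}(\delta\theta_1\,\delta\rho_2 - \delta\theta_2\,\delta\rho_1)/\rho \cdot \rho$, so that $\Im\int_M \delta\psi_1\overline{\delta\psi_2}\,\vol = \tfrac14\int_M(\delta\theta_1\,\delta\rho_2 - \delta\theta_2\,\delta\rho_1)\,\vol = \tfrac14\,\omega_{\mathrm{can}}$, which is exactly the claimed relation $\bf\Phi^*\omega_{FS} = \tfrac14\,\omega_{\mathrm{can}}$ (equivalently $4\bf\Phi$ is a symplectomorphism).

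I expect the main obstacle to be not the pointwise algebra but the careful bookkeeping of the two quotient constructions, making sure that the map is genuinely well-defined at the level of the cotangent \emph{bundle} and the \emph{projective} space. Specifically I would need to check that the $4\pi$-ambiguity in $\theta$ (noted in the remark) and the additive-constant ambiguity $[\theta]$ both map consistently into the single $S^1$-phase ambiguity $\psi\mapsto\ee^{\ii\tau}\psi$ defining $\mathbb P H^s$, and that the horizontality/normalization conditions used to define $\omega_{FS}$ are compatible with the variations coming from $(\delta\rho,\delta\theta)$ — in particular that a shift $\delta\theta\mapsto\delta\theta+\mathrm{const}$ produces a vertical variation proportional to $\ii\psi$ and hence drops out of $\omega_{FS}$, matching the fact that $\omega_{\mathrm{can}}$ is insensitive to constants in $\theta$. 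Establishing that $\bf\Phi$ is a diffeomorphism onto the stated open subset (so that it is a symplectomorphism and not merely a symplectic immersion) would complete the argument, and this follows from the explicit inverse recovering $\rho = \abs{\psi}^2/\norm{\psi}_{L^2}^2$ and $\theta$ from the phase.
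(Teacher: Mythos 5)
Your proposal follows essentially the same route as the paper's proof: the same canonical form on $T^*\Dens^s(M)$, the same Fubini--Study form $\Im\int_M \dot\psi_1\overline{\dot\psi_2}\,\vol$ on horizontal (coset) vectors, the identical tangent-map computation $\delta\psi = \tfrac12\big(\delta\rho/\rho + \ii\,\delta\theta\big)\psi$ yielding the factor $\tfrac14$, and the same bookkeeping of the two quotients (additive constants in $\theta$ versus the $S^1$-phase) together with invertibility via polar coordinates. The argument is correct and matches the paper's in both structure and substance.
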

\begin{proof}
We need to establish the following three steps: 
(i) ${\bf\Phi}$ is well-defined, (ii) ${\bf\Phi}$ is smooth, surjective and injective and (iii) ${\bf\Phi}$ is symplectic.

(i) Let $\rho\in\Dens^s(M)$. 
%Since $\Dens(M)$ is an open subset of an affine hyperplane in the space of $n$-forms we have 
%$T_\varrho\Dens(M)=\{ \dot{\varrho}\in \Omega^{n}(M) | \int_{M}\dot{\varrho} = 0\}$. 
%The dual of $\Omega^{n}(M)$ is $\Omega^{0}(M) = \Fcal(M)$, so the constant functions in $\Fcal(M)$ constitute the orthogonal complement of $T_{\varrho}\Dens(M)$. 
Recall that 
the elements of $T^{*}_{\rho}\Dens^s(M)$ are cosets of $H^s$ functions on $M$ modulo constants 
%such that $\tilde\theta\in[\theta]$ if and only if $\tilde\theta = \theta + \tau$ for some $\tau\in\RR$ 
and given any $\theta \in H^s(M,\mathbb{R})$ and any $\tau \in \mathbb{R}$ 
the Madelung transform maps $(\rho,\theta+\tau)$ to $\sqrt{\rho}\ee^{\ii(\theta+\tau)/2}$. 
If $s>n/2$ then standard results on products and compositions of Sobolev functions 
(cf. e.g., \cite{Pa1968})
show that it is smooth as a map to $H^{s}(M,\CC)$. 
Furthermore, we have 
$$
\big\| \sqrt{\rho}\ee^{\ii(\theta+\tau)/2} \big\|_{L^2} 
= 
\norm{\sqrt{\rho}\ee^{\ii\theta/2}}_{L^2} = \norm{\sqrt{\rho}}_{L^2} = 1
$$ 
so that that cosets $({\rho}, [\theta])$ are mapped to cosets $[\psi]$, i.e.,  the map is well-defined.

\smallskip

(ii) Surjectivity and smoothness of ${\bf\Phi}$ are evident. To prove injectivity for the cosets 
recall that inverting the Madelung map amounts essentially to rewriting of a non-vanishing 
complex-valued function in polar coordinates. Since preimages for a given $\psi$ 
differ by a constant polar argument 
$\tilde\theta=\theta+2\pi k$, they define the same coset $[\theta]$. 
Similarly, changing $\psi$ by a constant phase does not affect the argument coset $[\theta]$, 
which implies injectivity of the map between the cosets 
$({\rho}, [\theta])$ and $[\psi]$.\footnote{Note that the injectivity would not hold for $L^2$ functions, 
or even for smooth functions if $M$ were not connected. Indeed, the arguments of the preimages 
could then have incompatible integer jumps at different points of $M$. 
For continuous functions on a connected $M$ it suffices to fix the argument at one point only.} 
	
\smallskip	
(iii) The canonical symplectic form on $T^*\Dens^s(M)$ is given by 
\begin{equation}\label{eq:scaled_can_sympl} 
\Omega^{T^*\Dens}_{(\rho,[\theta])}\big((\dot\rho_1,[\dot\theta_1]),(\dot\rho_2,[\dot\theta_2])\big) \vol
= 
\int_M \left( \dot\theta_1\dot\rho_2 - \dot\theta_2\dot\rho_1\right)\vol. 
\end{equation} 
Since $\int_M \dot\rho_k\,\vol = 0$ it follows that it is well-defined on the cosets $[\dot\theta_i]$. 
The symplectic form on $\mathbb PH^s(M,\CC)$ is given by 
\begin{equation} \label{eq:wPH} 
\Omega^{\mathbb PH^s}_{[\psi]}\big([\dot\psi_1],[\dot\psi_2]\big) 
= 
\int_M \Im\big(\dot\psi_1\overline{\dot\psi_2} \big)\vol. 
\end{equation} 
The tangent vectors can be described as cosets 
$[\dot\psi_k] = \{ \ii c \psi + \dot\psi_k \mid c\in \RR \}$ 
obtained by differentiating 
$ [\psi]= \{ \psi\ee^{\ii\tau}\mid \tau\in \RR \}$.
One can see that $\Omega^{\mathbb PH^s}$ is well-defined on the coset vectors which follows from
 $\int_M \Re \big( \psi \dot {\overline{\psi}}_k \big) \vol = 0$ and a straightforward calculation.
	% We need to prove that
	% \begin{multline*}
	% 	\Omega^{\mathbb PH^s}_{{\bf\Phi}(\varrho,[\theta])}\left(T_{(\varrho,[\theta])}{\bf\Phi}(\dot\varrho_1,[\dot\theta_1]),T_{(\varrho,[\theta])}{\bf\Phi}(\dot\varrho_2,[\dot\theta_2])\right) =\\ \Omega^{T^*\Dens(M)}_{(\varrho,[\theta])}\Big((\dot\varrho_1,[\dot\theta_1]),(\dot\varrho_2,[\dot\theta_2])\Big).
	% \end{multline*} 
Finally, for $\psi = {\bf\Phi}(\rho,[\theta])$ the tangent vector is 
$T_{(\rho,[\theta])}{\bf\Phi}(\dot\rho,[\dot\theta]) 
=1/2 ( \dot\rho/\rho + \ii\dot\theta ){\bf\Phi}(\rho,[\theta]) $. 
%\end{equation*} 
% 
Then \eqref{eq:wPH} gives 
\begin{align*} 
&\Omega^{\mathbb PH^s}_{{\bf\Phi}(\rho,[\theta])}\left(T_{(\rho,[\theta])}{\bf\Phi}(\dot\rho_1, [\dot\theta_1]),T_{(\rho,[\theta])}{\bf\Phi}(\dot\rho_2,[\dot\theta_2])\right) 
= \\ &= 
\frac{1}{4}\int_M \Im\left( 
\Big( \frac{\dot\rho_1}{\rho} + \ii \dot\theta_1 \Big)
\Big( \frac{\dot\rho_2}{\rho} - \ii \dot\theta_2 \Big) 
\psi\overline{\psi}  \right)\vol 
= \\ &= 
\frac{1}{4}\int_M \left(\dot\theta_1\frac{\dot\rho_2}{\rho} 
-  
\dot\theta_2\frac{\dot\rho_1}{\rho} \right)\rho\,\vol
= 
\frac{1}{4}\int_M \left( \dot\theta_1\dot\rho_2 - \dot\theta_2\dot\rho_1 \right) \vol
\\ &= 
\frac{1}{4}\Omega^{T^*\Dens}_{(\rho,[\theta])} 
\big((\dot\rho_1,[\dot\theta_1]),(\dot\rho_2,[\dot\theta_2])\big)\,, 
\end{align*} 
which completes the proof. 
\end{proof} 
\begin{remark}
In \autoref{sec:momentum} the inverse Madelung transform is defined for any $C^1$ function 
with no restriction on strict positivity of $\abs{\psi}^2$. 
It can be defined similarly in a Sobolev setting. 
%  as
% $	{\bf M} : \psi\mapsto  ({\rm Im} \, \bar \psi \nabla \psi ,\bar \psi \psi )$.\todo[author=KM]{So far we didn't use any Riemannian structure on $M$. Here, it seems we do since we use the gradient. Maybe we should mention something about this.}\
% It takes a wave function  $\psi = \sqrt{\rho e^{\ii\theta}}$ to the pair $(\rho \nabla \theta/2 , \rho)$, from 
% which the pair $(\rho, [\theta])$ can be reconstructed.
% This map 	 is well-defined whenever the wave function $\psi$ is $C^1$ with no restriction of strict possitivity of 
% $\abs{\psi}^2$. Similarly,  Madelung can be extended to a smooth maps between the appropriate 
% Sobolev spaces. 
% \todo[inline,author=KM]{To me, the notation $\mathbf{M}$ for the inverse of ${\bf\Phi}$ seems a bit strange here.}
Furthermore, extending \citet{Fu2017}, we will also show  that it can be understood
as a momentum map for a natural action of a certain semi-direct product group. 
%and then reveal the origin of that group action. % $S$. 
Thus the Madelung transform relates 
the standard symplectic structure on the space of wave functions and the linear Lie-Poisson 
structure on the corresponding dual Lie algebra.% $\mathfrak{s}^*$. 
\end{remark} 
\begin{remark}
The fact that the Madelung transform is a symplectic submersion between the cotangent bundle 
of the space of densities and the unit sphere $S^\infty\subset H^s(M,\CC\backslash \{0\})$ of non-vanishing wave functions was proved by \citet{Re2012}. 
The stronger symplectomorphism property proved in \autoref{thm:madelung_symplectomorphism} 
is achieved by considering projectivization $\mathbb PH^s(M,\CC\backslash \{0\})$. 
\end{remark}
% 

%%%%%%%%%%%%%%%%%%%%%%%%%%%%%%%%
\subsection{Example: linear and nonlinear Schr\"odinger equations} \label{sub:nl_schrodinger} 
Let $\psi$ be a wave function on a Riemannian manifold $M$ and consider the family of Schr\"odinger 
(or Gross-Pitaevsky) equations with Planck's constant $\hbar=1$ and mass $m=1/2$ 
\begin{equation}\label{eq:schrodinger} 
 	\mathrm{i}\dot\psi = - \Delta\psi +  V\psi + f(\abs{\psi}^2)\psi, 
\end{equation} 
where $V\colon M\to \RR$ and $f\colon (0,\infty) \to \RR$. 
If $f\equiv 0$ we obtain the linear Schr\"odinger equation with potential~$V$. 
If $V\equiv 0$ we obtain the family of non-linear Schr\"odinger equations (NLS); 
two typical choices are $f(a) = \kappa a$ 
%another model example for the Gross-Pitaevsky equation is 
and $f(a) = \frac 12(a-1)^2$. 

Note that Equation \eqref{eq:schrodinger} is Hamiltonian with respect to the symplectic structure 
induced by the complex structure of $L^2(M,\CC)$.
Indeed, recall that the real part of a Hermitian inner product defines a Riemannian structure 
and the imaginary part defines a symplectic structure, so that 
\begin{equation*} 
\Omega(\psi_1,\psi_2): = \Im \inner{ \psi_1,\psi_2 }_{L^2} 
= 
\Re \inner{ \mathrm{i}\psi_1,\psi_2 }_{L^2} 
\end{equation*} 
defines a symplectic form $\Omega$ corresponding to the complex structure $J(\psi)=\ii\psi$. 
The Hamiltonian function for the Schrödinger equation~ \eqref{eq:schrodinger} is
\begin{equation} \label{eq:ham_NLS_psi} 
H(\psi) 
= 
\frac{1}{2}\norm{\nabla\psi}_{L^2}^{2} + \frac{1}{2}\int_M \left( V \abs{\psi}^2 + F(\abs{\psi}^{2}) \right)\vol, 
% + 
% \frac{1}{2}\int_M  F(\abs{\psi(x)}^{2}) \vol(x), 
	%\underbrace{\frac{\kappa}{4}\int_M \abs{\psi}^{4} \vol}_{U(\abs{\psi}^{2})}.
\end{equation} 
where $F\colon (0,\infty) \to \RR$ is a primitive function of $f$, namely $F'=f$. 

Observe that the $L^2$-norm of any solution $\psi$ of \eqref{eq:schrodinger} is conserved in time. 
Furthermore, the Schr\"odinger equation is also equivariant with respect to a constant phase shift 
$\psi(x)\mapsto e^{\ii\tau}\psi(x)$ 
and therefore descends to the projective space $\mathbb PH^s(M,\CC)$. 
It can be viewed as an equation on the complex projective space, 
a point of view first suggested in \cite{Ki1979}. 

\begin{proposition}[cf.\ \cite{Ma1927,Re2012}] 
The Madelung transform \eqref{eq:madelung_geometric} maps the family of Schr\"odinger equations \eqref{eq:schrodinger} to the following system on $T^*\Dens^s(M)$ 
\begin{equation}\label{eq:ham_eq} 
\left\{ 
\begin{aligned}
&\dot\theta +  \frac{1}{2}\abs{\nabla\theta}^2 - \frac{4 \Delta\sqrt{\rho}}{\sqrt{\rho}} + 2V + 2f(\rho) = 0,
\\
&\dot\rho + \divv(\rho {\nabla\theta}) = 0.
\end{aligned}\right. 
\end{equation} 
Equation \eqref{eq:ham_eq} has a hydrodynamic formulation as an equation for 
a barotropic-type fluid  
\begin{equation}\label{eq:barotropic2} 
\left\{ 
\begin{aligned} 
&\dot v + \nabla_v v + \nabla\Big(2V + 2f(\rho) - \frac{4 \Delta\sqrt{\rho}}{\sqrt{\rho}} \Big) = 0 
\\ 
&\dot\rho +\divv(\rho v) = 0 
\end{aligned} \right. 
\end{equation} 
with potential velocity field $v=\nabla \theta$. 
\end{proposition}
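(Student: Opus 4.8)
The plan is to verify the stated correspondence by a direct substitution of the polar form $\psi={\bf\Phi}(\rho,[\theta])=\sqrt{\rho}\,\ee^{\ii\theta/2}$ into the Schr\"odinger equation \eqref{eq:schrodinger}, followed by a separation into real and imaginary parts. Writing $\psi=R\,\ee^{\ii S}$ with $R=\sqrt{\rho}$ and $S=\theta/2$, I would first record the two elementary identities $\ii\dot\psi=(\ii\dot R-R\dot S)\,\ee^{\ii S}$ and $\Delta\psi=\big(\Delta R-R\abs{\nabla S}^2+\ii(2\nabla R\cdot\nabla S+R\,\Delta S)\big)\ee^{\ii S}$, where $\Delta$ is the Laplace--Beltrami operator. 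Substituting these into \eqref{eq:schrodinger}, using $\abs{\psi}^2=\rho$, and cancelling the common nowhere-vanishing factor $\ee^{\ii S}$ reduces the single complex equation to the system formed by its real and imaginary parts.

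The imaginary part is $\dot R=-(2\nabla R\cdot\nabla S+R\,\Delta S)$; substituting $R=\sqrt\rho$, $S=\theta/2$ and multiplying by $2\sqrt\rho$ turns it into the continuity equation $\dot\rho+\divv(\rho\nabla\theta)=0$, which is the second line of \eqref{eq:ham_eq}. The real part is $-R\dot S=-\Delta R+R\abs{\nabla S}^2+VR+f(R^2)R$; dividing by $\sqrt\rho>0$ and clearing the factors of $\tfrac12$ produced by $S=\theta/2$ yields the quantum Hamilton--Jacobi equation for $\dot\theta$, namely the first line of \eqref{eq:ham_eq}, in which the Bohm-type quantum potential enters through $\Delta\sqrt\rho/\sqrt\rho$. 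Here it is essential to track the half-angle relation $S=\theta/2$ with care, since it is precisely this factor that fixes every numerical constant in \eqref{eq:ham_eq} and, in particular, is responsible for the rescaled potential terms $2V$ and $2f(\rho)$.

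To obtain the hydrodynamic reformulation \eqref{eq:barotropic2} I would set $v=\nabla\theta$ and apply $\nabla$ to the first line of \eqref{eq:ham_eq}; the continuity equation is then already the second line of \eqref{eq:barotropic2}. The only step requiring a comment is the identity $\nabla_v v=\tfrac12\nabla\abs{v}^2$ for a gradient field $v=\nabla\theta$, which holds on an arbitrary Riemannian manifold: pairing $\nabla_{\nabla\theta}\nabla\theta$ against an arbitrary vector field $X$ gives $\mathrm{Hess}\,\theta(\nabla\theta,X)$, and the symmetry of the Hessian rewrites this as $\mathrm{Hess}\,\theta(X,\nabla\theta)=\tfrac12\,X\abs{\nabla\theta}^2$, i.e.\ as the pairing of $\tfrac12\nabla\abs{\nabla\theta}^2$ with $X$. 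Replacing $\tfrac12\nabla\abs{\nabla\theta}^2$ by $\nabla_v v$ then converts the gradient of the Hamilton--Jacobi equation into the momentum equation of \eqref{eq:barotropic2}.

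I expect the principal difficulty to be bookkeeping rather than conceptual: assembling the constants correctly through the half-angle substitution and confirming that the resulting system genuinely lives on $T^*\Dens^s(M)$. For the latter it suffices to observe that both lines of \eqref{eq:ham_eq} involve $\theta$ only through $\nabla\theta$ and $\dot\theta$, so that adding a constant to $\theta$ leaves the system unchanged; hence it descends to the cosets $[\theta]\in H^s(M)/\RR$ and thus defines an evolution equation on the cotangent bundle, in accordance with \autoref{thm:madelung_symplectomorphism}.
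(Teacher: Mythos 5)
Your argument takes a genuinely different route from the paper's. The paper never substitutes into the PDE at all: having already established in \autoref{thm:madelung_symplectomorphism} that ${\bf\Phi}$ is a symplectomorphism up to the factor $4$, it pulls back the Schr\"odinger Hamiltonian \eqref{eq:ham_NLS_psi} to $T^*\Dens^s(M)$ via $\norm{\nabla\psi}_{L^2}^2=\norm{\nabla\sqrt\rho}_{L^2}^2+\tfrac14\inner{\rho\nabla\theta,\nabla\theta}_{L^2}$, computes the variational derivatives of the resulting functional, and reads the system off Hamilton's equations $\dot\theta=-4\,\delta H/\delta\rho$, $\dot\rho=4\,\delta H/\delta\theta$ for the rescaled symplectic form \eqref{eq:scaled_can_sympl}. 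Your classical Madelung computation---substitute $\psi=R\ee^{\ii S}$, split into real and imaginary parts---is more elementary, does not invoke the symplectomorphism theorem, and produces the hydrodynamic form \eqref{eq:barotropic2} along the way; your identity $\nabla_v v=\tfrac12\nabla\abs{v}^2$ for $v=\nabla\theta$ via symmetry of the Hessian is valid on any Riemannian manifold, and your remark on descent to the cosets $[\theta]$ is adequate. What the paper's route buys is that the Hamiltonian nature of \eqref{eq:ham_eq} (used in the corollary that follows) comes for free, whereas in your approach it requires the symplectic property as separate input.

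There is, however, one step that fails as stated: the claim that the real part ``yields the first line of \eqref{eq:ham_eq}.'' If you assemble the very constants you insist must be tracked, then from $-R\dot S=-\Delta R+R\abs{\nabla S}^2+VR+f(R^2)R$ with $R=\sqrt\rho$, $S=\theta/2$ you obtain
\begin{equation*}
\dot\theta+\frac12\abs{\nabla\theta}^2-\frac{2\,\Delta\sqrt{\rho}}{\sqrt{\rho}}+2V+2f(\rho)=0\,,
\end{equation*}
i.e.\ coefficient $2$, not $4$, on the quantum-potential term. This is not a defect of your method but a discrepancy with the printed proposition: the factor $4$ in \eqref{eq:ham_eq} is a slip, traceable in the paper's own proof to the variational derivative, where the contribution of $\tfrac12\int_M\abs{\nabla\sqrt\rho}^2\vol$ to $\delta H/\delta\rho$ is $-\tfrac12\Delta\sqrt\rho/\sqrt\rho$ rather than the stated $-\Delta\sqrt\rho/\sqrt\rho$; with the corrected derivative, the paper's own Hamilton equations give exactly your coefficient $2$. (Sanity check: for the 1D oscillator ground state $V=\tfrac14\omega^2x^2$, $\rho\propto\ee^{-\omega x^2/2}$, $\theta=-\omega t$, one has $\Delta\sqrt\rho/\sqrt\rho=-\tfrac{\omega}{2}+\tfrac{\omega^2x^2}{4}$, and the Hamilton--Jacobi equation balances only with coefficient $2$.) So your derivation, carried to completion, is correct, but it proves the proposition with $2\Delta\sqrt\rho/\sqrt\rho$ in place of $4\Delta\sqrt\rho/\sqrt\rho$; you should display this constant and flag the disagreement rather than assert agreement with the printed equation.
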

\begin{remark}
Note  that \eqref{eq:ham_eq} only makes sense for $\rho>0$, whereas the NLS equation 
makes sense even when $\rho\geq 0$. In particular, the properties of the Madelung transform 
imply that if one starts with a wave function such that $\abs{\psi}^2 >0$ everywhere, 
then it remains strictly positive for all $t$ for which the solution to equation~\eqref{eq:ham_eq} is defined, since this holds for $\rho=\abs{\psi}^2$ by the continuity equation.
Thus, $\abs{\psi}^2$ can become non-positive only if $v=\nabla\theta$ stops being a $C^1$ vector field (so that the continuity equation breaks).
\end{remark}
%
% \todo[inline, author=G.]{\textcolor{red}{STOPPED HERE...XXX}} 
% 

% \begin{remark}
% As we will see below\todo[author=KM]{I'm not sure we'll see this below, since we're not discussing Wasserstein geometry so much in this paper. Should we instead give reference to our forthcoming Wasserstein geometry paper?}, this is a family of Newton's equations on $\Dens^s(M)$ 
% equipped with the Wasserstein-Otto metric \eqref{eq:otto_metric}  for the potential function
% \begin{equation}
%  U(\varrho) = 4 I(\varrho) + \int_M V\varrho + \int_M F(\rho)\vol
% \end{equation}
% on $\Dens^s(M)$,
% where $I$ is Fisher's information functional \eqref{eq:Fisher_info_func}. 
% Furthermore, the extension  of \eqref{eq:barotropic2}   from potential to arbitrary velocities 
% $v\in \Xcal(M)$ gives a general system  for a barotropic-type fluid on $\Xcal(M)\times\Dens^s(M)$.
% \end{remark} 

\begin{proof}
% this equation can be described as Newton's equation on $\Dens(M)$. 
%To see this, we derive the equations~\eqref{eq:Hamiltons_eq_downstairs} expressed in $\psi$.
Since the transformation $(\rho,[\theta])\mapsto \psi$ is symplectic, it is enough to work out the Hamiltonian~\eqref{eq:ham_NLS_psi} expressed in $(\rho,[\theta])$.
First, notice that 
\begin{equation}\label{eq:grad_psi}
	\nabla \psi = \ee^{\ii\theta/2} \Big( \nabla\sqrt{\rho} + \frac{\ii}{2}\sqrt{\rho}\,\nabla \theta \Big),
\end{equation}
so that 
\begin{equation}\label{eq:norm_psi}
\begin{split}
\norm{\nabla\psi}_{L^2}^{2} 
&= 
\inner{ \nabla \sqrt{\rho}+\frac{\ii}{2} \sqrt{\rho}\nabla\theta,\nabla \sqrt{\rho} 
+ 
\frac{\ii}{2} \sqrt{\rho}\nabla\theta }_{L^2} 
\\
&= 
\inner{ \nabla\sqrt{\rho},\nabla\sqrt{\rho}}_{L^2} + 
\frac{1}{4}\inner{\rho \nabla\theta,\nabla\theta }_{L^2} 
%\\
		% &= -\frac{1}{4}\inner{\Delta\log\rho,\rho}_{L^2} + \frac{1}{4}\inner{\rho \nabla\theta,\nabla\theta}_{L^2} %\\
		% &= -\pair{\Delta\sqrt{\rho},\sqrt{\rho}}_{L^2} +\pair{\rho \nabla\theta,\nabla\theta}_{L^2} 
		% = -\pair{\rho,\frac{\Delta\sqrt{\rho}}{\sqrt{\rho}}} +\pair{\rho \nabla\theta,\nabla\theta}_{L^2}.
\end{split}
\end{equation}
Thus, the Hamiltonian on $T^*\Dens^s(M)$ corresponding to 
the Schr\"odinger Hamiltonian \eqref{eq:ham_NLS_psi} is 
\begin{equation*}
H(\rho,[\theta]) 
= 
\frac{1}{2}\int_M \left(\frac{1}{4}\abs{\nabla\theta}^2\rho + \abs{\nabla\sqrt\rho}^2 \right)\vol 
+ 
\frac{1}{2}\int_M \left( V\rho+ F(\rho)\right)\vol. 
%\inner{\rho \nabla\theta,\nabla\theta}_{L^2}  - \frac{1}{8}\inner{\Delta\log{\rho},\rho}_{L^2}.
\end{equation*}
Since
\begin{equation*}
\frac{\delta H}{\delta \rho} = \frac{1}{8}\abs{\nabla\theta}^2 - \frac{\Delta\sqrt\rho}{\sqrt\rho} 
+ 
\frac{1}{2}V + \frac{1}{2} f(\rho)
\quad \text{and} \quad 
\frac{\delta H}{\delta \theta} = -\frac{1}{4}\divv(\rho\nabla\theta) 
\end{equation*}
the result now follows from Hamilton's equations 
\begin{equation*}
\dot\theta = -4\frac{\delta H}{\delta \rho}, \quad \dot\rho = 4\frac{\delta H}{\delta\theta}
\end{equation*}
for the canonical symplectic form \eqref{eq:scaled_can_sympl} scaled by $1/4$.
% This means that the Hamiltonian $\frac{1}{2}\norm{\nabla\psi}^{2}$ for the Schr\"odinger equation on $L^2(M,\CC)$ corresponds to the Hamiltonian \eqref{eq:H_standard_downstairs} with potential $ -\frac{1}{2}\pair{\rho,\frac{\Delta\sqrt{\rho}}{\sqrt{\rho}}}$, while the non-linear Schr\"odinger equation with the full potential $U(\abs{\psi}^2) = \int_M F(\abs{\psi}^2) \vol$ becomes Newton's equation on  $\Dens(M)$ with potential $\bar V(\rho) $ in  \eqref{eq:V_potential}.
\end{proof}
\begin{corollary}
The Hamiltonian system (\ref{eq:ham_eq}) on $T^*\Dens^s(M)$ for potential solutions of 
the barotropic equation \eqref{eq:barotropic2} is mapped symplectomorphically to 
the Schr\"odinger equation (\ref{eq:schrodinger}). 
%~\eqref{eq:madelung_symplectic}. 
\end{corollary}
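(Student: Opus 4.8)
The plan is to obtain the statement as an essentially immediate consequence of \autoref{thm:madelung_symplectomorphism} and the preceding proposition, rather than by any fresh computation. \autoref{thm:madelung_symplectomorphism} tells us that ${\bf\Phi}$ is, up to the constant factor $4$, a symplectomorphism onto the non-vanishing part of the projective space, and the proposition tells us exactly which Hamiltonian and which equations on $T^*\Dens^s(M)$ correspond to the Schr\"odinger data. The corollary is the intrinsic repackaging of these two facts, using the elementary principle that a symplectomorphism conjugates Hamiltonian flows.

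First I would record that principle in the form needed here. If $F\colon(P_1,\omega_1)\to(P_2,\omega_2)$ is a diffeomorphism with $F^*\omega_2 = c\,\omega_1$ for a nonzero constant $c$, then for any smooth Hamiltonian $H$ on $P_2$ the map $F$ conjugates the flow of the $\omega_2$-Hamiltonian vector field of $H$ to the flow of the $\omega_1$-Hamiltonian vector field of $H\circ F$, the latter rescaled by $1/c$. In our setting $F={\bf\Phi}$, and the computation at the end of the proof of \autoref{thm:madelung_symplectomorphism} gives precisely ${\bf\Phi}^*\Omega^{\mathbb PH^s} = \tfrac14\,\Omega^{T^*\Dens}$, so $c=\tfrac14$. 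This is exactly the origin of the factor $4$ in the Hamilton equations $\dot\theta = -4\,\delta H/\delta\rho$ and $\dot\rho = 4\,\delta H/\delta\theta$ invoked in the proof of the proposition.

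Next I would confirm that the Schr\"odinger dynamics actually lives where ${\bf\Phi}$ is defined. As recalled in \autoref{sub:nl_schrodinger}, equation \eqref{eq:schrodinger} is the Hamiltonian flow of \eqref{eq:ham_NLS_psi}; its $L^2$-norm is conserved, so it preserves the unit sphere, and it is equivariant under the phase shift $\psi\mapsto\ee^{\ii\tau}\psi$, so it descends to $\mathbb PH^s(M,\CC)$. Since the Hamiltonian \eqref{eq:ham_NLS_psi} is itself phase-invariant, it defines a function on $\mathbb PH^s(M,\CC)$ whose flow is this descended Schr\"odinger flow, and on the open submanifold $\mathbb PH^s(M,\CC\backslash\{0\})$ we remain in the range of ${\bf\Phi}$. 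Pulling this Hamiltonian back by ${\bf\Phi}$ reproduces the density-space Hamiltonian $H(\rho,[\theta])$ computed in the proposition, so by the principle above with $c=\tfrac14$ the ${\bf\Phi}$-image of the system \eqref{eq:ham_eq} --- equivalently the barotropic system \eqref{eq:barotropic2} for potential $v=\nabla\theta$ --- is the Schr\"odinger flow, and since ${\bf\Phi}$ is a bijection on cosets (step (ii) of the proof of \autoref{thm:madelung_symplectomorphism}) this is a genuine symplectomorphic correspondence of the two flows.

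The only points requiring care, and the place I would expect the real (if modest) obstacle, are bookkeeping and domain. The scaling constant $c=\tfrac14$ and the sign conventions in Hamilton's equations must be tracked consistently between the two symplectic forms. More substantively, ${\bf\Phi}$ and the system \eqref{eq:ham_eq} are defined only where $\rho=\abs{\psi}^2>0$, so the conjugation of flows is valid only on the open set of non-vanishing wave functions and only for as long as the solution stays there; this is precisely the caveat made in the remark following the proposition, namely that the correspondence persists exactly until $v=\nabla\theta$ fails to be $C^1$ and the continuity equation breaks down. Once these two points are handled, nothing further is needed beyond the two results already established.
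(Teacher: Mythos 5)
Your proposal is correct and follows essentially the same route as the paper: the corollary is stated there as an immediate consequence of \autoref{thm:madelung_symplectomorphism} together with the proposition (whose own proof already uses exactly your conjugation principle, via Hamilton's equations scaled by $4$ for the pulled-back Hamiltonian). Your explicit statement of the flow-conjugation lemma and the domain caveat about $\rho>0$ merely spell out what the paper leaves implicit, so nothing is missing and nothing genuinely new is added.
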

\begin{remark}
Conversely, classical PDE of hydrodynamic type can be expressed as NLS-type equations. 
For example, potential solutions $v=\nabla \theta$ of the compressible Euler equations of a barotropic fluid
are Hamiltonian on $T^*\Dens^s(M)$ with the Hamiltonian given as the sum of the kinetic energy 
$K=\frac 12\int_M |\nabla \theta|^2\rho\,\vol$
and the potential energy $U=\int_M e(\rho)\,\rho\,\vol$, where $e(\rho)$ is the fluid internal energy, 
see \cite{KhMiMo2018}. 
They can be formulated as an NLS equation with the Hamiltonian
\begin{equation}\label{eq:compressible_Euler_NLS_Hamiltonian}
H(\psi) 
= 
\frac{1}{2}\norm{\nabla\psi}_{L^2}^{2} 
- 
\frac{1}{2}\norm{\nabla \abs{\psi}}_{L^2}^{2} 
+ 
\int_M e(\abs{\psi}^{2})\abs{\psi}^{2}\vol. 
\end{equation} 
The choice $e=0$ gives a Schr\"odinger formulation for potential solutions of Burgers' equation, 
which describe geodesics in the $L^2$-type Wasserstein metric on $\Dens^s(M)$.
Thus, the geometric framework links the optimal transport for cost functions with potentials with 
the compressible Euler equations and the NLS-type equations described above.
\end{remark} 
\medskip

%%%%%%%%%%%%%
\subsection{Madelung transform as a Hasimoto map in 1D} \label{sec:hasimoto}

The celebrated {\it vortex  filament   equation} 
$$
\dot\gamma=\gamma'\times \gamma''
$$
is an evolution equation on a (closed or open) curve 
$\gamma\subset \RR^3$, where  $\gamma=\gamma(x,t)$ and  $\gamma':=\partial \gamma/\partial x$
and $x$ is an arc-length parameter along $\gamma$. 
(An equivalent {\it binormal form} of this  equation $\dot \gamma=k(x,t)\bf b$ 
is valid in any parametrization, 
where $\bf b=t\times n$ is the binormal unit vector to the curve at a point $x$, 
$\bf t$ and $\bf n$ are, respectively, the unit tangent and the normal vectors 
and $k(x,t)$ is the curvature of the curve at the point $x$ at moment $t$). 
This equation describes a localized induction approximation of the 3D Euler equation 
of an ideal fluid in $\RR^3$, where the vorticity of the initial velocity field is supported on a curve $\gamma$. 
(Note that the corresponding evolution of the vorticity 
is given by the hydrodynamical Euler equation, which becomes nonlocal in terms of vorticity. By considering the ansatz that keeps only local terms, it reduces to the filament  equation above.)
\smallskip

The vortex filament equation is known to be Hamiltonian with respect to 
the Marsden-Weinstein symplectic structure on the space of curves in $\RR^3$ and with Hamiltonian 
given by the length functional, see, e.g., \cite{ArKh1998}.
\begin{definition}
The   Marsden-Weinstein symplectic structure $\Omega^{MW}$
assigns to a pair of two variations $V,W$ of a curve $\gamma$ 
(understood as vector fields on $\gamma\subset \RR^3$) 
the value 
$\Omega^{MW} (V,W):=\int_\gamma i_V i_W\vol$, 
where $\vol$ is the Euclidean volume form in $\RR^3$. 
\end{definition}
It turns out that the vortex filament equation becomes the equation of the 1D barotropic-type fluid \eqref{eq:barotropic2} 
with $\rho=k^2$ and $v=2\tau$, where $k$ and $\tau$ denote curvature and torsion of the curve $\gamma$, respectively.
% \smallskip

% %%%%%%%%%%%%%%%%%%%%%

In 1972 Hasimoto~\cite{Ha1972} introduced the following surprising transformation.
\begin{definition} \label{def:hasimoto} 
The {\it Hasimoto transformation} assigns 
to a curve $\gamma$, with curvature $k$ and torsion $\tau$, a wave function $\psi$ 
according to the formula 
$$ 
(k(x),\tau(x))\mapsto \psi(x)=k(x)e^{\ii\int_{x_0}^x\tau(\tilde x)\ud\tilde x}. 
$$ 
\end{definition} 
This map takes the vortex filament  equation to the 1D NLS equation 
$\ii\dot \psi+\psi''+\frac 12 |\psi|^2\psi=0\,.$ 
(A change of the initial point $x_0$ in $\int_{x_0}^x \tau(\tilde x)d\tilde x$ 
leads to a multiplication of $\psi(x)$ by an irrelevant  constant phase $e^{\ii\alpha}$). 
In particular, the filament equation becomes a completely integrable system 
%(since the 1D NLS is one) 
whose first integrals are obtained by pulling back those of the NLS equation. 
The first integrals for the filament equation can be written in terms of 
the total length $\int dx$, the torsion $\int \tau\,\ud x$, the squared curvature $\int k^2\,\ud x$, 
followed by $ \int \tau k^2\,\ud x$ etc. 
% 
%\todo[author=G]{The DaRios-Levi Civita sentence does not seem relevant}
% 
%It is worth mentioning that originally the binormal equation was discovered by Da Rios, a student of Levi-Civita,
%exactly in the form of an evolution of the curve curvature $k$ and torsion $\tau$. %, see \cite{RR}.

%%%%%%%%%%%%%%%%%%

\begin{figure}
	\centering
	\begin{tikzpicture}
		\node[anchor=south west, inner sep=0] (image) at (0,0) {\includegraphics[width=0.5\textwidth]{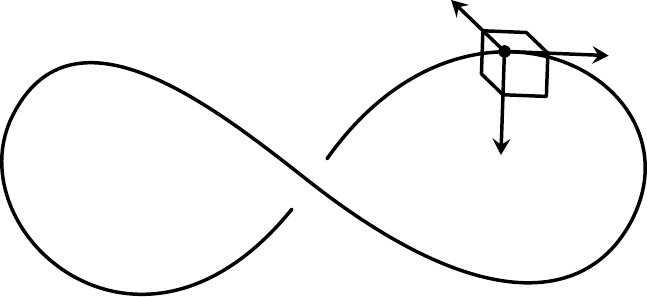}};
		% \node[anchor=south west, inner sep=0] (image) at (0.3\textwidth,0) {\includegraphics[width=0.3\textwidth]{part_sphere.pdf}};
		\begin{scope}[x={(image.south east)},y={(image.north west)}]
			\coordinate (gdot) at (0.69,0.99) {};
			\coordinate (gprime) at (0.93,0.76) {};
			\coordinate (gbis) at (0.73,0.52) {};
			\coordinate (g) at (0.45,0.2) {};
			\node[left, rotate=0] at (gdot) {$\dot\gamma = \gamma'\times\gamma''$};
			\node[above right, rotate=0] at (gprime) {$\gamma'$};
			\node[below right, rotate=0] at (gbis) {$\gamma''$};
			\node[right, rotate=0] at (g) {$\gamma$};
		\end{scope}
	\end{tikzpicture}
	\caption{Vortex filament flow: each point of the curve $\gamma$ moves in the direction of the
	binormal. If $k(x)$ and $\tau(x)$ are the curvature and torsion at $\gamma(x)$, then the wave
	function  $\psi(x) = k(x) \ee^{\ii \int_{x_0}^x \tau(\tilde x)\ud \tilde x}$ 
		satisfies the NLS equation. Moreover, the pair
	of functions $v=2\tau$ and $\rho=k^2$ satisfies the equation of the 1D barotropic fluid, which
	is a manifestation of the 1D Madelung transform.}\label{fig:binormal}
\end{figure}

%%%%%%%%%%%%%%%%%%%%

\begin{remark} \label{rem:LP}
Each of the three forms of the above equations has a natural  symplectic or Poisson structure: 
the Marsden-Weinstein  symplectic structure on nonparametrized curves $\{\gamma\}$ for
the binormal equation, the linear Lie-Poisson structure on (the dual of) the semidirect product 
$\mathfrak s= \Xcal(S^1)\ltimes H^s(S^1)\ni (v,\rho)$
for the 1D compressible Euler equation on $v=2\tau$ and $\rho=k^2$, 
and the standard constant symplectic structure on wave functions $\{\psi\}$ for the NLS.

Langer and Perline \cite{LaPe1991} established symplectic properties of the Hasimoto transform. It turns out that the 
Marsden-Weinstein symplectic structure expressed in terms of the curvature and torsion is mapped by the Hasimoto 
transform to the constant symplectic structure on wave functions. (The original statement in \cite{LaPe1991} is more complicated, since
the passage from the curve $\gamma$ to its curvature and torsion requires taking two extra derivatives.)
This symplectic property has the following heuristic explanation.
The Marsden-Weinstein symplectic structure $\Omega^{MW}$ on curves in $\RR^3$ is, essentially, averaging of the 
standard symplectic structures in all normal planes to the curve $\gamma$. Furthermore, one can regard the 
curvature magnitude $|k(x)|$ as the radial coordinate in each normal plane, while  (twice) the integral of torsion 
$\theta(x)=2\int \tau(x)\,\ud x$ as the angular coordinate (since torsion is by definition the angular velocity of the rotation of the normal vector). 
This means that the passage from affine coordinates in normal planes to the polar ones $(k^2, \theta/2)$ is a symplectic map: $dx\wedge dy=(1/2)d(k^2)\wedge d\theta$.
On the other hand, $k$ and $\theta$ are (adjusted) polar coordinates of the wave function $\psi(x)=k(x)e^{\ii\theta(x)}$.
So one arrives at the standard symplectic structure on the wave functions, regarded as complex-valued functions.
\end{remark}
The following proposition relates the Hasimoto transform to the classical Madelung transform, 
see Section \ref{sec:madelung}. 
\begin{proposition} 
The Hasimoto transformation is the Madelung transform in the 1D case.
\end{proposition}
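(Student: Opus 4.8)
The plan is to prove the statement by a direct substitution, making explicit the dictionary $\rho = k^2$, $v = 2\tau$ already recorded above and then checking that the Madelung formula \eqref{eq:madelung_def} reproduces Hasimoto's formula of Definition \ref{def:hasimoto} on the nose. Since the assertion is an identity between two closed-form expressions, the work is bookkeeping of the factor $2$ and of the several phase and base-point ambiguities rather than any analytic estimate.

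First I would fix the one-dimensional setting $M = S^1$ with arc-length parameter $x$ and translate the curve data into $T^*\Dens^s(S^1)$. A curve $\gamma$ with curvature $k$ and torsion $\tau$ gives the barotropic variables $\rho = k^2$ and $v = 2\tau$; because the velocity field is potential, $v = \theta'$, so the phase is
$$\theta(x) = 2\int_{x_0}^x \tau(\tilde x)\,\ud\tilde x$$
up to an additive constant, that constant being exactly the freedom in choosing the base point $x_0$. Thus $(k,\tau)$ determines a point $(\rho,[\theta])$, the coset $[\theta]$ absorbing the $x_0$-ambiguity; the implicit caveats ($k$ nowhere zero so that $\rho>0$, and normalization $\int_{S^1} k^2\,\ud x = 1$) are the natural conditions placing the data in $\Dens^s(S^1)$ and do not affect the formula.

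Next I would substitute this dictionary into the Madelung transform. Choosing the branch $\sqrt{\rho}=k$ (curvature taken nonnegative), we obtain
$${\bf\Phi}(\rho,\theta) = \sqrt{\rho}\,\ee^{\ii\theta/2} = k\,\ee^{\ii\int_{x_0}^x \tau(\tilde x)\,\ud\tilde x},$$
where the factor $2$ in $\theta = 2\int\tau$ cancels against the $1/2$ in the Madelung exponent and yields precisely the Hasimoto wave function. This cancellation is the only point that needs genuine care, and it is exactly the reason the dictionary carries $v = 2\tau$ rather than $v=\tau$; I expect this factor-tracking to be the main (and essentially the sole) obstacle.

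Finally I would match the residual ambiguities to confirm the identification descends to the correct quotients: a change of $x_0$ shifts $\theta$ by a constant and multiplies $\psi$ by a constant phase $\ee^{\ii\alpha}$, which on the Madelung side is exactly the passage to the coset $[\theta]$ and to the projective class $[\psi]$, in agreement with Hasimoto's remark that $x_0$ is irrelevant up to constant phase. As a consistency check one may note that under the same dictionary the 1D barotropic system \eqref{eq:barotropic2} is the vortex filament equation written in $(k,\tau)$ while the family \eqref{eq:schrodinger} specializes to the cubic NLS, so the commuting square of equations is realized concretely; and the symplectic compatibility discussed in Remark \ref{rem:LP} then shows the identification is not merely formal. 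The content of the proposition, however, is the equality of the two maps, which the substitution above already establishes.
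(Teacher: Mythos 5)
Your proposal is correct and follows essentially the same route as the paper: the paper's argument is precisely the comparison of Definitions \ref{def:madelung} and \ref{def:hasimoto} via the dictionary $\rho=k^2$, $v=2\tau=\theta'$, so that $\sqrt{\rho}=k$ and $\theta/2=\int\tau\,\ud x$, which makes the Madelung formula reproduce Hasimoto's. Your additional bookkeeping of the base-point, coset, and projective-phase ambiguities only makes explicit what the paper leaves as a remark.
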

This can be seen by comparing Definitions \ref{def:madelung} and \ref{def:hasimoto} which make the Hasimoto transform seem much less surprising. 
Alternatively, one may note that for $\psi(x)=\sqrt{\rho(x)}e^{\ii\theta(x)/2}$ the pair $(\rho, v)$ 
with $v=\nabla\theta$ satisfies the compressible Euler equation, while in the one-dimensional case 
these variables are expressed via the curvature $\sqrt{\rho}=\sqrt{k^2}=k$ 
and the (indefinite) integral of torsion $\theta(x)/2=\int v( x)d  x=\int \tau( x)d  x$. 
\begin{remark} 
The filament equation  has a higher-dimensional analog for membranes 
(i.e., compact oriented surfaces $\Sigma$ of co-dimension 2 in $\RR^n$) 
as a skew-mean-curvature flow
$ 
\dot q={\bf J}({\bf MC}(q)), 
$ 
where $q\in \Sigma$ is any point of the membrane, ${\bf MC}(q)$ is the mean curvature vector to $\Sigma$ 
at the point $q$ and $\bf J$ is the operator of rotation by $\pi/2$ in the positive direction in every normal space 
to $\Sigma$. 
This equation is again Hamiltonian with respect to the Marsden-Weinstein structure $\Omega^{MW}$
on membranes of co-dimension 2 
and with a Hamiltonian function given by the $(n-2)$-dimensional volume of the membrane, see e.g.\ \cite{Sh2012}.
\end{remark}
An intriguing problem in this area is the following.
% \smallskip

\begin{question} Find an analogue of the Hasimoto map, which sends a 
skew-mean-curvature flow to an NLS-type equation for any $n$.
\end{question}

% \smallskip

The existence of the Madelung transform and its symplectic property in any dimension 
is a strong indication that such an analog should exist. 
Indeed, in any dimension by means of the Madelung transform one can pass from the wave function 
evolved according to an NLS-type equation to the polar form of $\psi$, i.e. to its magnitude $\sqrt{\rho}$ 
and the phase $\theta$, so that the pair $(\rho, v)$ with $v=\nabla\theta$ will evolve according to 
the compressible Euler equation. 
Thus for a surface $\Sigma$ of co-dimension 2 moving according to
the skew-mean-curvature flow, the problem boils down to interpreting the corresponding characteristics 
$(\rho, \nabla\theta)$ similarly to the one-dimensional curvature and torsion.
(Note that both the pair $(\rho, \theta)$ and the co-dimension 2 surface $\Sigma$ in $\RR^n$
can be encoded by two functions of $n-2$ variables). 
% \smallskip

In any dimension the square of the mean curvature vector can be regarded as a natural
analog of the density $\rho=\|{\bf MC}\|^2$. In this case an analog of the total mass of the fluid, 
i.e. 
$\int_\Sigma\rho\,\ud \sigma$, is the {\it Willmore energy} 
$\mathcal{W}(\Sigma)=\int_\Sigma \|{\bf MC}\|^2\,\ud \sigma$.
An intermediate step in finding a higher-dimensional Hasimoto map is then the following 

% \medskip

\begin{conjecture}
For a co-dimension 2 surface $\Sigma\in \RR^n$ moving by the skew-mean curvature flow 
$\dot q={\bf J}({\bf MC}(q))$ the following equivalent properties hold:

{\rm i)} its Willmore energy $\mathcal{W}(\Sigma)$ is invariant, 

{\rm ii)}
 its square mean curvature $\rho=\|{\bf MC}\|^2$ evolves according to the continuity equation $\dot \rho+{\rm div}(\rho v)=0$ for some vector field $v$ on $\Sigma$.
\end{conjecture}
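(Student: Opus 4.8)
The plan is to prove the equivalence of (i) and (ii) first, reducing both to a single analytic assertion, and then to attack that assertion by a first-variation computation that exploits the special algebraic structure of the normal velocity ${\bf J}({\bf MC})$. The kinematic fact that makes everything fit together is that the skew-mean-curvature flow preserves the induced Riemannian volume form $\ud\sigma$ on $\Sigma$. Indeed, under a purely normal deformation with velocity $W$ the induced metric evolves by $\partial_t g_{ij}=-2\langle W,{\bf II}_{ij}\rangle$, so that $\partial_t(\ud\sigma)=-\langle {\bf MC},W\rangle\,\ud\sigma$; since ${\bf J}$ rotates each (two-dimensional) normal plane by $\pi/2$ we have $\langle {\bf MC},{\bf J}({\bf MC})\rangle=0$ pointwise, whence $\partial_t(\ud\sigma)=0$. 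Identifying the moving $\Sigma_t$ with a fixed abstract manifold via the flow, $\ud\sigma$ thus becomes a time-independent reference volume, exactly mirroring the role of $\vol$ on the fluid side of the Madelung picture with $\rho=\|{\bf MC}\|^2$. With $\ud\sigma$ fixed the equivalence (i)\,$\Leftrightarrow$\,(ii) is then immediate: (ii)\,$\Rightarrow$\,(i) follows by integrating the continuity equation over the closed $\Sigma$, while (i)\,$\Rightarrow$\,(ii) follows because invariance of $\mathcal{W}(\Sigma)=\int_\Sigma\rho\,\ud\sigma$ gives $\int_\Sigma\dot\rho\,\ud\sigma=0$, so that the flux $\rho v=\nabla f$ defined by $\Delta_{\ud\sigma}f=-\dot\rho$ (solvable since $\dot\rho$ has zero mean) produces a $v$ satisfying $\dot\rho+\divv(\rho v)=0$.

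The substance of the conjecture is therefore the invariance of the Willmore energy, and here I would compute $\tfrac{d}{dt}\mathcal{W}$ directly. Because $\ud\sigma$ is preserved, $\tfrac{d}{dt}\mathcal{W}=2\int_\Sigma\langle\nabla^\perp_t{\bf MC},{\bf MC}\rangle\,\ud\sigma$, where $\nabla^\perp_t$ is the time derivative in the normal bundle. The decisive structural observation is that in codimension $2$ the normal bundle is an oriented rank-$2$ bundle, hence a complex line bundle on which ${\bf J}$ is the parallel, skew-adjoint almost complex structure ``multiplication by $\ii$''. Inserting the standard first-variation formula $\nabla^\perp_t{\bf MC}=\Delta^\perp W+Q({\bf II},W)$ for the mean curvature vector under normal velocity $W={\bf J}({\bf MC})$ (the ambient curvature terms drop out since $\RR^n$ is flat, leaving only the rough normal Laplacian $\Delta^\perp$ and a term $Q$ quadratic in the second fundamental form), the principal contribution is $\int_\Sigma\langle\Delta^\perp({\bf J}\,{\bf MC}),{\bf MC}\rangle\,\ud\sigma$. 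Using that ${\bf J}$ is parallel, so it commutes with $\Delta^\perp$, is skew-adjoint, and that $\Delta^\perp$ is self-adjoint, a one-line integration by parts shows this integral equals its own negative and hence vanishes.

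The main obstacle is the remaining lower-order term $\int_\Sigma\langle Q({\bf II},{\bf J}\,{\bf MC}),{\bf MC}\rangle\,\ud\sigma$, which couples ${\bf J}$ to the full second fundamental form rather than to ${\bf MC}$ alone. I expect that this too integrates to zero, and the route I would take is to expand $Q$ in a local frame of the complex normal line bundle, trace out the tangential indices, and repeatedly apply the Codazzi equations (which in flat $\RR^n$ relate tangential derivatives of ${\bf II}$ to the normal curvature) together with the parallelism and skew-adjointness of ${\bf J}$, aiming to force the integrand into a total divergence. Verifying this cancellation is precisely the delicate codimension-$2$ computation that keeps the statement a conjecture; the curve case $n=3$ is the degenerate instance in which ${\bf II}$ reduces to curvature and torsion and the cancellation becomes the classical fact that $\int k^2\,\ud x$ is an NLS invariant. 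As an independent check and a possible alternative proof, one could instead argue on the symplectic side: the flow is Hamiltonian for the Marsden-Weinstein structure $\Omega^{MW}$ with Hamiltonian the $(n-2)$-volume, so invariance of $\mathcal{W}$ is equivalent to the Poisson commutation $\{\mathcal{W},\mathrm{Vol}\}=0$, which may be more transparent to establish directly and would at the same time illuminate the conjectural higher-dimensional Hasimoto map.
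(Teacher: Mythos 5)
The statement you are proving is labelled a \emph{conjecture} in the paper: the authors do not prove it in general. Their treatment consists of exactly two things: a one-sentence justification of the equivalence of (i) and (ii) via Moser's theorem (preservation of total mass implies the density evolution is realized by the flow of a time-dependent vector field), and a separate proposition proving the 1D (vortex filament) case by a direct integration by parts. Your handling of the equivalence is correct and in fact sharper than the paper's: the observation that the skew-mean-curvature flow preserves the induced volume form $\ud\sigma$ (because $\langle {\bf J}({\bf MC}),{\bf MC}\rangle=0$ pointwise) is exactly what is needed to make the continuity equation for the \emph{function} $\rho=\|{\bf MC}\|^2$ well posed against a fixed reference volume, and your Poisson-equation construction of the flux is the standard proof of the time-dependent Moser argument the paper invokes. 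One small caveat you share with the paper: recovering $v=\nabla f/\rho$ requires $\rho>0$, i.e.\ a nowhere-vanishing mean curvature vector, which should be stated as a hypothesis.

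Where your proposal stops is precisely where the paper stops: the actual invariance of the Willmore energy. Your first-variation computation is genuine partial progress --- the cancellation of the principal term $\int_\Sigma\langle\Delta^\perp({\bf J}\,{\bf MC}),{\bf MC}\rangle\,\ud\sigma$ using that ${\bf J}$ is a parallel, skew-adjoint complex structure on the oriented rank-2 normal bundle is correct, and your remark that the remaining term $\int_\Sigma\langle Q({\bf II},{\bf J}\,{\bf MC}),{\bf MC}\rangle\,\ud\sigma$ vanishes pointwise in the curve case (where ${\bf II}$ reduces to ${\bf MC}$ itself) recovers the paper's 1D proposition. But the cancellation of the $Q$-term for general codimension-2 membranes is not established, and you say so yourself; this is not a flaw in your reasoning so much as the open content of the conjecture, which no argument in the paper resolves either. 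In short: you have proved everything the paper proves (the equivalence, plus in effect the 1D case), by essentially the same route but with more explicit detail, and the remaining gap in your proposal coincides with the gap that makes the statement a conjecture rather than a theorem.
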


% \medskip

The equivalence of the two statements is 
a consequence of Moser's theorem: 
if the total mass on a surface is preserved, the corresponding evolution of density can be realized 
as a flow of a time-dependent vector field. 
\begin{proposition}
The conjecture is true in dimension 1. 
\end{proposition}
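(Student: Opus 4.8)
The plan is to reduce the statement to the one-dimensional dictionary already established in \autoref{sec:hasimoto}. In dimension one we have $n=3$, so a co-dimension $2$ surface $\Sigma$ is simply a curve $\gamma\subset\RR^3$. For an arc-length parametrized curve the (vector-valued) second fundamental form is $\gamma''=k\,\mathbf n$, hence the mean curvature vector is $\mathbf{MC}=k\,\mathbf n$, and the operator $\mathbf J$ of rotation by $\pi/2$ in the normal plane sends $\mathbf{MC}=k\mathbf n$ to $k\mathbf b=\gamma'\times\gamma''$. Thus the skew-mean-curvature flow $\dot q=\mathbf J(\mathbf{MC}(q))$ is exactly the binormal (vortex filament) equation $\dot\gamma=\gamma'\times\gamma''$, while the square mean curvature is $\rho=\|\mathbf{MC}\|^2=k^2$, so that the Willmore energy specializes to $\mathcal W(\gamma)=\int_\gamma k^2\,\ud x$.

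Next I would invoke the identification, recalled in \autoref{sec:hasimoto}, of the vortex filament equation with the one-dimensional barotropic-type fluid equation \eqref{eq:barotropic2} under the correspondence $\rho=k^2$ and $v=2\tau$. The second component of that system is precisely the continuity equation $\dot\rho+\divv(\rho v)=0$. Taking $v=2\tau$ therefore yields property (ii) at once. This is the substantive step, and since the correspondence $\gamma\leftrightarrow(\rho,v)=(k^2,2\tau)$ is already in hand, the verification amounts to reading the continuity equation off the barotropic system rather than recomputing the evolution of $k^2$ directly from the Frenet--Serret relations.

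Property (i) then follows from (ii) by integration: for a closed curve (or one with suitable decay) integrating the continuity equation over $\gamma$ annihilates the divergence term, so that
\[
\frac{\ud}{\ud t}\,\mathcal W(\gamma)=\frac{\ud}{\ud t}\int_\gamma k^2\,\ud x=\frac{\ud}{\ud t}\int_\gamma\rho\,\ud x=-\int_\gamma\divv(\rho v)\,\ud x=0,
\]
which is the invariance of the Willmore energy. Equivalently, one may simply note that $\int k^2\,\ud x$ is among the first integrals of the filament equation pulled back from the completely integrable NLS hierarchy. The reverse implication (i)$\Rightarrow$(ii) is the one-dimensional instance of the Moser-type argument already indicated: preservation of the total mass $\int_\gamma\rho\,\ud x$ permits the density evolution to be realized as transport along a time-dependent vector field on $\gamma$.

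The only genuine subtlety I would flag is that the direction carrying the content, namely (ii), requires exhibiting the advecting field \emph{concretely}; here this is automatic, as the established 1D correspondence supplies $v=2\tau$, and no appeal to the abstract Moser realization is needed. The real difficulty—identifying the analogue of $2\tau$, i.e.\ the field $v$ advecting $\rho=\|\mathbf{MC}\|^2$ in the continuity equation—is exactly what makes the higher-dimensional conjecture hard, and it is precisely what the one-dimensional Frenet geometry hands us for free.
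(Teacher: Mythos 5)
Your proposal is correct in substance but runs in the opposite direction from the paper's own proof, so it is worth comparing the two. The paper proves (i) directly: differentiating $\mathcal{W}(\gamma)=\int_\gamma|\gamma''|^2\,\ud x$ along the flow $\dot\gamma=\gamma'\times\gamma''$, integrating by parts, and using $(\gamma'\times\gamma'')'=\gamma'\times\gamma'''$, which is orthogonal to $\gamma'''$, gives $\frac{\ud}{\ud t}\mathcal{W}(\gamma)=0$ in two lines; property (ii) is then obtained abstractly from the Moser-type equivalence stated just before the proposition. You instead establish (ii) first, by reading the continuity equation off the Hasimoto--Madelung dictionary $(\rho,v)=(k^2,2\tau)$, and then integrate over the closed curve to deduce (i). What your route buys is the explicit transporting field $v=2\tau$ --- precisely the datum that the Moser argument does not produce and whose higher-dimensional analogue is the open problem posed in the paper --- so it is arguably more in the spirit of the surrounding discussion. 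What it costs is twofold. First, the dictionary you invoke is asserted but not proved in the text (it is the composition of the Hasimoto transform with the inverse Madelung transform), so the substantive computation is outsourced rather than carried out; a referee could reasonably ask you to verify the continuity equation from the Betchov--Da Rios evolution of $k$ and $\tau$, at which point your proof is no shorter than the paper's. Second, and more seriously, that dictionary requires nowhere-vanishing curvature: the torsion $\tau$, the Hasimoto phase $\int\tau\,\ud x$, and the Madelung correspondence (which, as the paper emphasizes, needs $\rho=|\psi|^2>0$) are all undefined at points where $k=0$. The paper's direct computation of (i) holds with no such restriction, and Moser then supplies some transporting field $v$ in general. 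So as written your argument proves the proposition only for curves of nowhere-vanishing curvature; to remove that hypothesis you would need either the paper's direct computation or a separate limiting argument.
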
 
\begin{proof}
In 1D the conservation of the Willmore energy is the time invariance of the integral
$\mathcal{W}(\gamma)=\int_\gamma k^2 \,\ud x$ or, equivalently, in the arc-length parameterization, 
of the integral 
$\int_\gamma |\gamma''|^2\,\ud x$. 
The latter invariance follows from the following straightforward computation 
\begin{align*} 
\frac{d}{dt}\mathcal{W}(\gamma) %=\frac{d}{dt}\int_\gamma (\gamma'',\gamma'')\,\ud x
&= 
2\int_\gamma (\dot\gamma'',\gamma'')\,\ud x 
= 
-2\int_\gamma (\dot\gamma',\gamma''')\,\ud x 
\\ 
&= 
-2\int_\gamma ((\gamma'\times \gamma'')',\gamma''')\,\ud x=0. 
\end{align*} 
\end{proof} 
It would be very interesting to find a higher-dimensional analog of the torsion $\tau$ 
for co-dimension 2 membranes.
Note that the integral of the torsion has to play the role of an angular coordinate 
in the tangent spaces to $\Sigma$. The torsion would be the gradient part of the field $v$ 
transporting the density $\rho=\|{\bf MC}\|^2$. 
Essentially, the question is how to encode a co-dimension 2 surface by its mean curvature and torsion. 
Presumably $\nabla\theta$ as an analog of $\tau$ can be regarded as an angle of rotation  (``the phase") of 
the vector $\bf MC$, i.e.  it might play a role of an exact 1-form. 

\begin{question}
Is such a surface $\Sigma \subset \RR^n$ of co-dimension 2 reconstructable (modulo isometries) 
from the vectors $\bf MC$, i.e. from their magnitude $\|{\bf MC}\|$ and an ``angle of rotation", 
an exact 1-form $d\theta$?
\end{question}

Finally, note that such a higher-dimensional Hasimoto map should inherit the Poisson properties 
of the Madelung transform. 
The heuristic argument of Remark \ref{rem:LP} concerning the relation of the symplectic structure 
in the two-dimensional normal bundle and the space of wave functions should work in any dimension. 
The Madelung transform between complex-valued wave functions and pairs consisting of densities 
and gradient potentials has been already shown to be symplectic, see Section \ref{def:madelung}.

%%%%%%%%%%%%%%%%%%%%%%%%%%%%%%%%%%%%
\section{Madelung transform as an isometry of K\"ahler manifolds} 
\label{sub:kahler_properties_of_madelung}

\subsection{Metric properties} %of the Madelung transform}
In this section we prove that the Madelung transform is an isometry and a K\"ahler map 
between the lifted Fisher-Rao metric on the cotangent bundle $T^{*}\Dens^s(M)$ and the K\"ahler structure 
corresponding to the Fubini-Study metric on the infinite-dimensional projective space $\mathbb PH^s(M,\CC)$.
% \todo[author=KM]{Actually, is the mapping well-defined between Sobolev spaces?}
% \bk{Insert later: }

\begin{definition}\label{def:fisher_rao}
The Fisher-Rao metric on the density space $\Dens^s(M)$ is given by 
\begin{equation}\label{eq:fisher_rao} 
\MetF_{\rho}(\dot\rho,\dot\rho) 
= 
\frac{1}{4}\int_M \frac{\dot\rho^2}{\rho}\,\vol. 
\end{equation} 
\end{definition} 
This metric is invariant under the action of the diffeomorphism group. 
It is, in fact, the \emph{only} Riemannian metric on $\Dens^s(M)$ with this property, 
cf. e.g., \cite{BaBrMi2016}.

Next, observe that an element of $TT^{*}\Dens^s(M)$ is a 4-tuple $(\rho,[\theta],\dot\rho,\dot\theta)$, 
where $\rho\in\Dens^s(M)$, $[\theta] \in H^s(M)/\RR$, $\dot\rho \in H^s_0(M)$ 
and $\dot\theta \in H^s(M)$ subject to the constraint 
\begin{equation} 
\int_M \dot\theta \rho\,\vol = 0. 
\end{equation} 
\begin{definition} 
The lift of the Fisher-Rao metric to the cotangent bundle $T^{*}\Dens^s(M)$ has the form 
\begin{equation}\label{eq:sasaki_FR_metric} 
\MetF^*_{(\rho,[\theta])}\big( (\dot\rho,\dot\theta),(\dot\rho,\dot\theta) \big) 
= 
\frac{1}{4}\int_{M} \bigg( \frac{\dot\rho^2}{\rho} 
+ 
\dot\theta^2\rho \bigg) \vol. 
\end{equation} 
We will refer to this metric as the \emph{Sasaki-Fisher-Rao} metric.
\end{definition}

Next, recall that the canonical (weak) \emph{Fubini-Study metric} on the complex projective space 
$\mathbb PH^s(M,\CC) \subset \mathbb PL^2(M,\CC)$ is given by
\newcommand{\FS}{\mathsf{FS}} 
\begin{equation}\label{eq:fubini_study} 
\FS_\psi(\dot\psi,\dot\psi) 
= 
\frac{\inner{\dot\psi,\dot\psi}_{L^2}}{\inner{\psi,\psi}_{L^2}} 
- 
\frac{\inner{\psi,\dot\psi}_{L^2} \inner{\dot\psi,\psi}_{L^2}}{\inner{\psi,\psi}_{L^2}^{2}}. 
\end{equation} 
\begin{theorem}\label{thm:madelung_isometry} 
The Madelung transform ${\bf\Phi}\colon T^{*}\Dens^s(M)\to \mathbb PH^s(M,\CC)$ is an isometry 
with respect to the Sasaki-Fisher-Rao metric~\eqref{eq:sasaki_FR_metric} on $T^{*}\Dens^s(M)$ 
and the Fubini-Study metric~\eqref{eq:fubini_study} on $\mathbb P H^s(M,\CC\backslash \{0\})$.
\end{theorem}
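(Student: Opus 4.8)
The plan is to leverage Theorem~\ref{thm:madelung_symplectomorphism}, which already establishes that ${\bf\Phi}$ is a diffeomorphism; it then remains only to check that it pulls back the Fubini--Study metric~\eqref{eq:fubini_study} to the Sasaki--Fisher--Rao metric~\eqref{eq:sasaki_FR_metric} on each tangent space. I would reuse the differential computed in the proof of Theorem~\ref{thm:madelung_symplectomorphism}, namely
\begin{equation*}
\dot\psi \coloneqq T_{(\rho,[\theta])}{\bf\Phi}(\dot\rho,[\dot\theta]) = \frac{1}{2}\Big( \frac{\dot\rho}{\rho} + \ii\dot\theta \Big)\psi, \qquad \psi = {\bf\Phi}(\rho,[\theta]),
\end{equation*}
together with the fact, also from that proof, that $\norm{\psi}_{L^2}=1$. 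Then the denominators $\inner{\psi,\psi}_{L^2}$ in~\eqref{eq:fubini_study} equal $1$ and the Fubini--Study expression collapses to $\FS_\psi(\dot\psi,\dot\psi) = \inner{\dot\psi,\dot\psi}_{L^2} - \inner{\psi,\dot\psi}_{L^2}\inner{\dot\psi,\psi}_{L^2}$.

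First I would evaluate the leading term. Using $\abs{\psi}^2 = \rho$ and $\abs{\dot\rho/\rho + \ii\dot\theta}^2 = \dot\rho^2/\rho^2 + \dot\theta^2$, one obtains directly
\begin{equation*}
\inner{\dot\psi,\dot\psi}_{L^2} = \frac{1}{4}\int_M \Big( \frac{\dot\rho^2}{\rho^2} + \dot\theta^2 \Big)\rho\,\vol = \frac{1}{4}\int_M \Big( \frac{\dot\rho^2}{\rho} + \dot\theta^2\rho \Big)\vol,
\end{equation*}
which is exactly $\MetF^*_{(\rho,[\theta])}$ evaluated on $(\dot\rho,\dot\theta)$. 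Thus the whole content of the theorem reduces to showing that the correction term $\inner{\psi,\dot\psi}_{L^2}\inner{\dot\psi,\psi}_{L^2} = \abs{\inner{\psi,\dot\psi}_{L^2}}^2$ vanishes.

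This vanishing is the crux, and it is precisely where the geometry of the cotangent bundle enters. Computing $\inner{\psi,\dot\psi}_{L^2} = \tfrac{1}{2}\int_M (\dot\rho/\rho - \ii\dot\theta)\rho\,\vol = \tfrac{1}{2}\big(\int_M \dot\rho\,\vol - \ii \int_M \dot\theta\rho\,\vol\big)$, I would observe that the real part vanishes because $\dot\rho \in H^s_0(M)$ and the imaginary part vanishes because of the defining constraint $\int_M \dot\theta\rho\,\vol = 0$ on tangent vectors to $T^*\Dens^s(M)$. Geometrically this says that the representative $\dot\psi$ is already $L^2$-orthogonal to $\psi$, i.e.\ horizontal for the fibration onto the projective space, so the Fubini--Study subtraction is trivial. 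I expect this to be the main conceptual point rather than a computational obstacle: the algebra is light, and the only care needed is to track the Hermitian inner-product convention so that the $\ii\dot\theta$ term lands in the imaginary part. The genuine content is recognizing that the two a priori unrelated constraints (zero mean of $\dot\rho$ and the cotangent constraint) conspire to annihilate exactly the radial and phase components of $\dot\psi$.

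Finally, I would combine this with Theorem~\ref{thm:madelung_symplectomorphism} to obtain the K\"ahler statement of the Main Theorem. Having shown that ${\bf\Phi}$ preserves (up to the overall scaling by $4$) both the symplectic form and the Riemannian metric, the compatibility relation $\Omega(\cdot,\cdot) = \met(J\,\cdot,\cdot)$ on the K\"ahler target forces ${\bf\Phi}$ to intertwine the complex structures as well, so that it is a K\"ahler morphism.
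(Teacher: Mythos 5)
Your proposal is correct and follows essentially the same route as the paper's own proof: compute the differential $\dot\psi = \tfrac{1}{2}(\dot\rho/\rho + \ii\dot\theta)\psi$, use $\norm{\psi}_{L^2}=1$ to reduce the Fubini--Study expression, identify $\inner{\dot\psi,\dot\psi}_{L^2}$ with the Sasaki--Fisher--Rao metric, and kill the correction term via the two constraints $\int_M \dot\rho\,\vol = 0$ and $\int_M \dot\theta\rho\,\vol = 0$. The only additions beyond the paper's argument are your (correct) framing of the vanishing of $\inner{\psi,\dot\psi}_{L^2}$ as horizontality for the projective fibration and the closing remark on the K\"ahler property, which the paper records separately as a corollary.
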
 
\begin{proof}
% Let, as before, $\varrho = \rho\,\vol$ and $\dot\varrho = \dot\rho \vol$.
We have 
\begin{equation*} 
T_{(\rho,[\theta])}{\bf\Phi} (\dot\rho,\dot\theta) 
= 
\frac{\dot\rho}{2\sqrt{\rho}} \, e^{\ii\theta/2} + \frac{\ii\dot\theta \sqrt{\rho}}{2} \, e^{\ii\theta/2} 
= 
\frac{1}{2}\left( \frac{\dot\rho}{\rho} + \ii \dot\theta\right)\psi\,,
\end{equation*} 
where $\psi = {\bf\Phi}(\rho,[\theta])$. Since $\|\psi\|_{L^2}^2 = 1$, setting 
$\dot\psi = T_{(\rho,\theta)}{\bf\Phi} (\dot\rho,\dot\theta)$ 
we obtain 
\begin{equation*} 
\FS_\psi(\dot{\psi},\dot{\psi}) 
= 
\inner{\dot\psi,\dot\psi}_{L^2} - \inner{\dot\psi,\psi}_{L^2} \inner{\psi,\dot\psi}_{L^2} \,,
\end{equation*} 
where 
\begin{equation*} 
\inner{\dot\psi,\dot\psi}_{L^2} 
= 
\frac{1}{4}\int_{M} \left| \frac{\dot\rho}{\rho} + \ii \theta \right|^{2} \rho\,\vol 
= 
\frac{1}{4}\int_M \bigg( \frac{\dot\rho^{2}}{\rho^{2}} + \dot\theta^{2} \bigg) \rho 
= 
\MetF^{*}_{(\rho,\theta)}(\dot\rho,\dot\theta) 
\end{equation*} 
and 
\begin{equation*} 
\inner{\dot\psi,\psi}_{L^2} 
= 
\frac{1}{2} \int_{M} \bigg( \frac{\dot\rho}{\rho} + \ii\dot\theta  \bigg) \rho \,\vol 
= 
\frac{1}{2} \int_M \dot\rho \,\vol+ \frac{\ii}{2} \int_M \dot\theta\rho\,\vol = 0 \,,
\end{equation*} 
which proves the theorem. 
\end{proof}

The metric property in \autoref{thm:madelung_isometry} combined with the symplectic property 
in \autoref{thm:madelung_symplectomorphism} yields the following. 
\begin{corollary} 
The cotangent bundle $T^*\Dens^s(M)$ is a Kähler manifold with the Sasaki-Fisher-Rao metric  \eqref{eq:sasaki_FR_metric} and the canonical symplectic structure \eqref{eq:scaled_can_sympl} 
scaled by $1/4$. 
The corresponding integrable almost complex structure is given by
\begin{equation}\label{eq:complex_structure} 
J_{(\rho,[\theta])}(\dot\rho,\dot\theta) = \bigg(\dot\theta\rho,-\frac{\dot\rho}{\rho} \bigg). 
\end{equation} 
\end{corollary}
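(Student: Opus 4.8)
The plan is to transport the K\"ahler structure from the target $\mathbb PH^s(M,\CC\backslash \{0\})$ back to $T^*\Dens^s(M)$ along the Madelung transform. The target is a genuine K\"ahler manifold: its Fubini--Study metric \eqref{eq:fubini_study}, the symplectic form \eqref{eq:wPH}, and the integrable complex structure $J^{\mathbb PH}$ induced by multiplication by $\ii$ on $H^s(M,\CC)$ together satisfy the K\"ahler compatibility relations. By \autoref{thm:madelung_symplectomorphism} the map ${\bf\Phi}$ is a diffeomorphism pulling back the Fubini--Study symplectic form to $\tfrac14\Omega^{T^*\Dens}$, and by \autoref{thm:madelung_isometry} it pulls back the Fubini--Study metric to the Sasaki--Fisher--Rao metric \eqref{eq:sasaki_FR_metric}. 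Thus two of the three K\"ahler data on $T^*\Dens^s(M)$---the metric $\MetF^*$ and the symplectic form $\tfrac14\Omega^{T^*\Dens}$---are already realized as pullbacks of the corresponding data on $\mathbb PH^s$, and it only remains to supply the compatible complex structure.

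First I would define the candidate almost complex structure on $T^*\Dens^s(M)$ as the pullback $J := (T{\bf\Phi})^{-1}\circ J^{\mathbb PH}\circ T{\bf\Phi}$. Since $J^{\mathbb PH}$ is the restriction of the constant complex structure (multiplication by $\pm\ii$) on the ambient space $H^s(M,\CC)$, it is manifestly integrable, and integrability is preserved under the diffeomorphism ${\bf\Phi}$ (the Nijenhuis tensor is natural with respect to diffeomorphisms); hence $J$ is an integrable almost complex structure with $J^2=-\id$. The K\"ahler compatibility relations $\MetF^*(JX,JY)=\MetF^*(X,Y)$ and $\tfrac14\Omega^{T^*\Dens}(X,Y)=\MetF^*(JX,Y)$ then follow immediately, because all three structures are pullbacks under the single map ${\bf\Phi}$ of structures satisfying the analogous relations on $\mathbb PH^s$. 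This establishes that $(\MetF^*,\tfrac14\Omega^{T^*\Dens},J)$ is a K\"ahler structure.

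It then remains to compute $J$ explicitly. Using the tangent map
$$
T_{(\rho,[\theta])}{\bf\Phi}(\dot\rho,\dot\theta)=\tfrac12\Big(\tfrac{\dot\rho}{\rho}+\ii\dot\theta\Big)\psi
$$
from the proof of \autoref{thm:madelung_isometry}, one multiplies by $\ii$ and reads off the preimage coordinates, solving $T{\bf\Phi}\big(J(\dot\rho,\dot\theta)\big)=\pm\ii\,T{\bf\Phi}(\dot\rho,\dot\theta)$. Matching real and imaginary parts gives $J(\dot\rho,\dot\theta)=(\dot\theta\rho,-\dot\rho/\rho)$; the overall sign is fixed by requiring the compatibility convention $\tfrac14\Omega^{T^*\Dens}(X,Y)=\MetF^*(JX,Y)$, which a one-line substitution into \eqref{eq:scaled_can_sympl} and \eqref{eq:sasaki_FR_metric} confirms, and one checks directly that $J^2=-\id$.

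The main obstacle is \emph{integrability}. In finite dimensions a metric-compatible almost complex structure on a symplectic manifold is by no means automatically integrable, so the content of the statement is precisely that $J$ inherits integrability from the target. The resolution is that $J^{\mathbb PH}$ is not an abstract compatible structure but the honest complex structure of the projective Hilbert (or Fr\'echet) space, and ${\bf\Phi}$ identifies $T^*\Dens^s(M)$ holomorphically with an open subset of it; transport of structure along a diffeomorphism then does all the work, sidestepping any appeal to an infinite-dimensional Newlander--Nirenberg theorem.
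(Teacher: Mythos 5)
Your proposal is correct and takes essentially the same route as the paper: the corollary there is obtained precisely by combining \autoref{thm:madelung_symplectomorphism} and \autoref{thm:madelung_isometry} to transport the K\"ahler structure of $\mathbb PH^s(M,\CC\backslash\{0\})$ back to $T^*\Dens^s(M)$, with $J$ arising as the pullback of multiplication by $\pm\ii$ under $T{\bf\Phi}$. You in fact supply more detail than the paper (naturality of the Nijenhuis tensor for integrability, and the compatibility check $\tfrac14\Omega^{T^*\Dens}(X,Y)=\MetF^*(JX,Y)$ fixing the sign), all of which is sound.
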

This result can be compared with the result of Molitor \cite{Mo2015b} who described 
a similar construction using (the cotangent lift of) the $L^2$ Wasserstein metric in optimal transport 
but obtained an almost complex structure on $T^\ast\Dens^s(M)$ which is not integrable. 
It appears that the Fisher-Rao metric is a more natural choice for such constructions: its lift to 
$T^\ast\Dens^s(M)$ admits a compatible complex (and K\"ahler) structure. 
It would be interesting to write down K\"ahler potentials for all metrics compatible 
with \eqref{eq:complex_structure} 
and identify which of these are invariant under the action of the diffeomorphism group.

% We now introduce the complex variable $\psi \coloneqq \sqrt{\rho}\,\ee^{i\theta} \in L^2(M,\CC)$.
% The transformation $T^*\Dens(M) \ni (\varrho,\theta) \mapsto \psi \in L^2(M,\CC)$ is symplectic, since the pointwise transformation $T^*\RR^+ \ni (r,t) \mapsto \sqrt{r} \ee^{i t} \in \CC$ is symplectic.
% Indeed, the symplectic form on $\CC$ is given by $i\,\ud z\wedge \ud\bar z$, and
% \begin{equation}\label{eq:pointwise_symplectic}
% 	\begin{split}
% 		i\, \ud z \wedge \ud \bar z &= i\, \ud (\sqrt{r}\ee^{i t}) \wedge \ud (\sqrt{r}\ee^{-i t}) \\
% 		&= i\left( \frac{1}{2\sqrt{r}}\ud r + i \sqrt{r}\ud t \right)\wedge\left(\frac{1}{2\sqrt{r}}\ud r - i \sqrt{r}\ud t \right) \\
% 		&= i\left( -\frac{i}{2}\ud r\wedge\ud t + \frac{i}{2}\ud t\wedge\ud r  \right)
% 		\\ 
% 		&= \ud r\wedge\ud t .
% 	\end{split}
% \end{equation}

%%%%%%%%% 
\subsection{Geodesics of the Sasaki-Fisher-Rao metric}

As an isometry the Madelung transform maps geodesics of the Sasaki metric to geodesics 
of the Fubini-Study metric. 
The latter
% , geodesics in the Fubini-Study metric, 
are projective lines in the projective space of wave functions. 
To see which submanifolds are mapped to projective lines by the Madelung transform 
we need to describe geodesics of the Sasaki-Fisher-Rao metric.
\begin{proposition}\label{prop:geodesic_eq_SFR}
Geodesics of the Sasaki-Fisher-Rao metric~\eqref{eq:sasaki_FR_metric} on the cotangent bundle 
$T^{*}\Dens^s(M)$ satisfy the system 
\begin{equation*}
		\left\{
		\begin{aligned}
		\frac{\ud}{\ud t}\left( \frac{\dot\rho}{\rho} \right) &= - \frac{1}{2}\left(\frac{\dot\rho}{\rho}\right)^{2} + \frac{\dot\theta^{2}}{2}\,, \\
		\frac{\ud}{\ud t}\left( \dot\theta \rho \right) &= 0\,.			
		\end{aligned}
		\right.
\end{equation*} 
\end{proposition}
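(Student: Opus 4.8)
The plan is to obtain the two identities as the Euler--Lagrange equations of the energy functional of the Sasaki--Fisher--Rao metric, treating $\rho$ and $\theta$ as the two field variables of a curve $t\mapsto(\rho(t),[\theta(t)])$. Concretely, a geodesic is a critical point of
\[
E(\rho,\theta)=\frac12\int_0^1\MetF^*_{(\rho,[\theta])}\big((\dot\rho,\dot\theta),(\dot\rho,\dot\theta)\big)\,\ud t=\frac18\int_0^1\!\int_M\Big(\frac{\dot\rho^2}{\rho}+\dot\theta^2\rho\Big)\vol\,\ud t ,
\]
so it suffices to vary $E$ separately in $\theta$ and in $\rho$.

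Varying in $\theta$ is immediate: the integrand depends on $\theta$ only through $\dot\theta$, so $\theta$ is a cyclic variable and its conjugate momentum density $\tfrac14\dot\theta\rho$ is conserved. This is precisely the Noether law for the symmetry $\theta\mapsto\theta+\mathrm{const}$ (the same symmetry that makes $[\theta]$ a coset), and it yields the second equation $\frac{\ud}{\ud t}(\dot\theta\rho)=0$. For the first equation I would compute the functional derivatives $\delta L/\delta\dot\rho=\tfrac14\,\dot\rho/\rho$ and $\delta L/\delta\rho=\tfrac18(\dot\theta^2-\dot\rho^2/\rho^2)$, so that $\frac{\ud}{\ud t}(\delta L/\delta\dot\rho)=\delta L/\delta\rho$, after multiplying by $4$, becomes $\frac{\ud}{\ud t}(\dot\rho/\rho)=\tfrac12\dot\theta^2-\tfrac12(\dot\rho/\rho)^2$, which is the first equation.

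The point that needs care --- and which I expect to be the main obstacle --- is the constraint structure hidden in the definition of $T^*\Dens^s(M)$: the base variable $\rho$ is confined to the affine hyperplane $\int_M\rho\,\vol=1$ (so admissible $\delta\rho$ lie in $H^s_0(M)$), while $\dot\theta$ is the representative singled out by $\int_M\dot\theta\rho\,\vol=0$. A priori the $\rho$-variation therefore only determines the equation up to a spatially constant Lagrange multiplier, and one must verify that the displayed equations are the correct reduced form. The cleanest way I would settle this is through the Madelung transform itself. Before projectivizing (equivalently, on the cone of positive densities obtained by dropping the normalization), differentiating as in Theorems \ref{thm:madelung_symplectomorphism} and \ref{thm:madelung_isometry} gives $T\Phi(\dot\rho,\dot\theta)=\tfrac12(\dot\rho/\rho+\ii\dot\theta)\psi$, whence $\int_M|\dot\psi|^2\vol=\MetF^*_{(\rho,[\theta])}$; thus $\Phi$ carries the Sasaki--Fisher--Rao metric to the flat $L^2$ Hermitian metric on wave functions, whose geodesics are the straight lines $\psi(t)=\psi_0+t\xi$, i.e.\ $\ddot\psi=0$. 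Writing $\psi=A\,\ee^{\ii B}$ with $A=\sqrt\rho$ and $B=\theta/2$, the real and imaginary parts of $\ddot\psi=0$ read $\ddot A=A\dot B^2$ and $\frac{\ud}{\ud t}(A^2\dot B)=0$; re-expressing these in $(\rho,\theta)$ reproduces the two equations verbatim and is how I would confirm their precise, multiplier-free form. This flat-model computation is the step I would rely on to pin down the geodesic equations.
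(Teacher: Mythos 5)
Your first paragraph is, almost word for word, the paper's own proof: the paper takes $L=\MetF^*$ as the Lagrangian, records the four variational derivatives $\delta L/\delta\dot\rho$, $\delta L/\delta\dot\theta$, $\delta L/\delta\rho$ and $\delta L/\delta\theta=0$, and reads off the two displayed equations from the unconstrained Euler--Lagrange equations --- nothing more. So up to that point you have faithfully reproduced the published argument, including the observation that $\theta$ is cyclic.

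The problem is the step you declare load-bearing. Your worry about the constraint $\int_M\rho\,\vol=1$ is exactly right, but the flat-model argument does not settle it --- it removes the constraint rather than accounting for it. Dropping the normalization changes the manifold: the Madelung image of the unnormalized cone is the set of non-vanishing wave functions with the flat $L^2$ metric, whose geodesics are indeed straight lines; but $T^*\Dens^s(M)$ corresponds to the unit sphere (and, after the phase quotient, the projective space), and the sphere is \emph{not} totally geodesic in flat $L^2$: a line $\psi_0+t\xi$ with $\Re\inner{\xi,\psi_0}_{L^2}=0$ has $\norm{\psi_0+t\xi}_{L^2}^2=1+t^2\norm{\xi}_{L^2}^2$ and leaves the sphere at once. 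Equivalently, in the $(\rho,\theta)$ variables one checks that along solutions of the displayed system
\begin{equation*}
\frac{\ud}{\ud t}\int_M\dot\rho\,\vol
=\frac{1}{2}\int_M\Big(\frac{\dot\rho^2}{\rho}+\dot\theta^2\rho\Big)\vol\;>\;0
\end{equation*}
unless the curve is constant, so this flow does not preserve $\Dens^s(M)$: what you (and the displayed system) describe are geodesics of the cone, not of $T^*\Dens^s(M)$. The genuine Sasaki--Fisher--Rao geodesics are, by \autoref{thm:madelung_isometry}, the Fubini--Study geodesics, i.e.\ horizontal great circles $\ddot\psi=-\norm{\dot\psi}_{L^2}^2\,\psi$ rather than straight lines; writing these in polar form gives
\begin{equation*}
\frac{\ud}{\ud t}\Big(\frac{\dot\rho}{\rho}\Big)=-\frac{1}{2}\Big(\frac{\dot\rho}{\rho}\Big)^{2}+\frac{\dot\theta^{2}}{2}-2E,
\qquad
\frac{\ud}{\ud t}\big(\dot\theta\rho\big)=0,
\end{equation*}
where $E=\MetF^*_{(\rho,[\theta])}\big((\dot\rho,\dot\theta),(\dot\rho,\dot\theta)\big)$ is the conserved energy. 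In other words, the spatially constant Lagrange multiplier you anticipated is $-2E$ and it does \emph{not} vanish; the same constant appears for the Fisher--Rao metric \eqref{eq:fisher_rao} itself, whose geodesics are great circles under $\rho\mapsto\sqrt\rho$. To be fair to you, the paper's proof performs the identical unconstrained computation and silently drops this multiplier, so the proposition as printed shares the defect your second paragraph was designed to rule out; but as a piece of mathematics your ``confirmation'' step is precisely where the constraint --- and with it the multiplier --- gets lost.
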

\begin{proof} 
The Lagrangian is given by the metric 
$L(\rho,\theta,\dot\rho,\dot\theta ) = \MetF^{*}_{(\rho,\theta)}((\dot\rho,\dot\theta),(\dot\rho,\dot\theta))$. 
The variational derivatives are obtained from the formulas 
\begin{align*}
		\frac{\delta L}{\delta \dot\rho} &= \frac{1}{2}\frac{\dot\rho}{\rho}\,, & \frac{\delta L}{\delta \dot\theta } &= \frac{1}{2}\dot\theta\rho\,, \\
		\frac{\delta L}{\delta \rho} &= - \frac{1}{4}\left(\frac{\dot\rho}{\rho}\right)^2 + \frac{1}{4}\dot\theta^{2}\,, & \frac{\delta L}{\delta\theta} &= 0 \,,
\end{align*} 
which yield the equations of motion as stated. 
\end{proof}
\begin{remark} 
The natural projection $(\rho,[\theta]) \mapsto \rho$ is a Riemannian submersion between 
$T^{*}\Dens^s(M)$ equipped with the Sasaki-Fisher-Rao metric \eqref{eq:sasaki_FR_metric} 
and $\Dens^s(M)$ equipped with the Fisher-Rao metric \eqref{eq:fisher_rao}. 
The corresponding horizontal distribution on $T^*\Dens^s(M)$ is given by 
\begin{equation*} 
\mathrm{Hor}_{(\rho,[\theta])} 
= 
\big\{ (\dot\rho,\dot\theta)\in T_{(\rho,[\theta])}\Dens^s(M) \mid \dot\theta = 0 \big\}. 
\end{equation*} 
Indeed, if $\dot\theta = 0$ then the equations of motion of \autoref{prop:geodesic_eq_SFR}, 
restricted to $(\rho,\dot\rho)$, yield the geodesic equations for the Fisher-Rao metric. 
One can think of this as a zero-momentum symplectic reduction corresponding to 
the abelian gauge symmetry 
$(\rho,[\theta]) \mapsto (\rho,[\theta + f])$ for any function $f\in H^s(M)$. 
\end{remark} 
\subsection{Example: 2-component Hunter-Saxton equation}

This is a system of two equations 
\begin{equation}\label{eq:two_HS_eq}
\left\{
\begin{array}{l}
		\dot v'' = -2 v' v'' - v v''' + \sigma\sigma'\,, \\
		\dot\sigma = - (\sigma v)' \,,
\end{array} \right. 
\end{equation} 
where $v=v(t,x)$ and $\sigma=\sigma(t,x)$ are time-dependent periodic functions on the line 
and the prime stands for the $x$-derivative.
It can be viewed as a high-frequency limit of the 2-component Camassa-Holm equation, 
cf.\ \cite{HaWu2011}. 

It turns out that this system is closely related to the K\"ahler geometry of the Madelung transform 
and the Sasaki-Fisher-Rao metric~\eqref{eq:sasaki_FR_metric}. 
Consider the semi-direct product $\mathcal{G} = \Diff_0^{s+1}(S^1)\ltimes H^s(S^1,S^1)$, 
where $\Diff_0^{s+1}(S^1)$ is the group of circle diffeomorphisms that fix a prescribed point 
and $H^s(S^1,S^1)$ is the space of Sobolev $S^1$-valued maps of the circle. 
The group multiplication is given by 
\begin{equation*} 
(\varphi,\alpha)\cdot(\eta,\beta) = (\varphi\circ\eta,\beta + \alpha\circ\eta). 
\end{equation*} 
Define a right-invariant Riemannian metric on $\mathcal G$ at the identity element by 
\begin{equation} \label{eq:lenells_metric} 
% \Met_{(\id,0)}\big( (u,\sigma), (v,\tau) \big) =  
\inner{ (v,\sigma), (v,\sigma) }_{\dot{H}^1} 
= 
\frac{1}{4} \int_{S^1} \left( (v')^2 +  \sigma^2\right) \ud x. 
\end{equation} 
If $t \to (\varphi(t), \alpha(t))$ is a geodesic in $\mathcal{G}$ then 
$v = \dot\varphi\circ\varphi^{-1}$ and $\sigma = \dot\alpha\circ\varphi^{-1}$ 
satisfy equations \eqref{eq:two_HS_eq}.
% , cf. \cite{Ko2011}. \bk{Do we need this ref?}
Lenells \cite{Le2013b} showed that the map 
\begin{equation}\label{eq:lenells_map} 
(\varphi,\alpha) \mapsto \sqrt{\varphi' \, \mathrm{e}^{\ii\alpha}} 
\end{equation} 
is an isometry from $\mathcal G$ to a subset of 
$\{ \psi \in H^s(S^1,\CC) \mid \norm{\psi}_{L^2} = 1 \}$. 
Moreover, solutions to \eqref{eq:two_HS_eq} satisfying $\int_{S^1} \sigma  \, \ud x = 0$ 
correspond to geodesics in the complex projective space $\mathbb PH^s(S^1,\CC)$ 
equipped with the Fubini-Study metric. 
Our results show that this isometry is a particular case of \autoref{thm:madelung_isometry}. 
% to the setting in \autoref{sub:kahler_properties_of_madelung}.
% 
\begin{proposition}\label{prop:2HS_as_Sasaki}
The 2-component Hunter-Saxton \arxivorpnas{equation}{} \eqref{eq:two_HS_eq} with initial data satisfying
$\int_{S^1} \sigma \,\ud x = 0$ is equivalent to the geodesic equation of the Sasaki-Fisher-Rao metric 
\eqref{eq:sasaki_FR_metric} on $T^\ast\Dens^s(S^1)$. 
\end{proposition}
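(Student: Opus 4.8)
The plan is to realize the Lenells map \eqref{eq:lenells_map} as the composition of the Madelung transform ${\bf\Phi}$ with a natural Riemannian submersion $\Pi\colon\mathcal{G}\to T^*\Dens^s(S^1)$, and then to read the proposition off from the isometry property of ${\bf\Phi}$ established in \autoref{thm:madelung_isometry}. Concretely, I would set $\Pi(\varphi,\alpha)=(\varphi',[\alpha])$, noting that $\varphi'>0$ with $\int_{S^1}\varphi'\,\ud x=1$ is a probability density and that $\alpha$ enters only modulo an additive constant, so that $(\varphi',[\alpha])$ is a genuine point of $T^*\Dens^s(S^1)=\Dens^s(S^1)\times H^s(S^1)/\RR$. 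With this definition the two square-root formulas coincide: writing $p\colon\{\psi\mid\norm{\psi}_{L^2}=1\}\to\mathbb PH^s(S^1,\CC)$ for the natural projection, the Lenells map $\Lambda(\varphi,\alpha)=\sqrt{\varphi'\ee^{\ii\alpha}}$ and the Madelung transform satisfy $p\circ\Lambda={\bf\Phi}\circ\Pi$, since both send $(\varphi,\alpha)$ to the class of $\sqrt{\varphi'\ee^{\ii\alpha}}$.

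I would then carry out the following steps. First, a direct computation at the identity $(\varphi,\alpha)=(\id,0)$, where $\dot\rho=v'$ and $\dot\theta=\sigma$, shows that the $\dot H^1$-metric \eqref{eq:lenells_metric} equals the Sasaki-Fisher-Rao metric \eqref{eq:sasaki_FR_metric}; the vertical directions of $\Pi$ are exactly the constant reparametrizations $(v,\sigma)=(0,\mathrm{const})$, so the horizontal (i.e.\ $\dot H^1$-orthogonal) subspace is precisely $\{\int_{S^1}\sigma\,\ud x=0\}$. Second, $\Lambda$ is an isometry onto its image by Lenells and it carries $\Pi$-vertical vectors to the phase direction $\ii\psi$, which is vertical for $p$ (a Riemannian submersion from the $L^2$-sphere to the Fubini-Study space). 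Since ${\bf\Phi}$ is an isometry by \autoref{thm:madelung_isometry}, chasing $p\circ\Lambda={\bf\Phi}\circ\Pi$ on horizontal vectors shows that $\Pi$ is itself a Riemannian submersion from the $\dot H^1$-metric to the Sasaki-Fisher-Rao metric. Third, because $\frac{\ud}{\ud t}\int_{S^1}\sigma\,\ud x=-\int_{S^1}(\sigma v)'\,\ud x=0$ by the second equation of \eqref{eq:two_HS_eq}, the constraint $\int_{S^1}\sigma\,\ud x=0$ is preserved in time, so a solution of \eqref{eq:two_HS_eq} with this initial condition is a horizontal geodesic of the $\dot H^1$-metric. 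By O'Neill's theorem such a geodesic projects under $\Pi$ to a geodesic of the Sasaki-Fisher-Rao metric, and conversely every Sasaki-Fisher-Rao geodesic lifts to such a solution; comparison with \autoref{prop:geodesic_eq_SFR} yields the stated equivalence.

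The main obstacle is bookkeeping rather than analysis: one must keep careful track of the passage between the Lagrangian group variables $(\varphi,\alpha)\in\mathcal{G}$ and the Eulerian cotangent variables $(\rho,[\theta])\in T^*\Dens^s(S^1)$, and verify that the quotient by the constant-phase direction performed by $\Pi$ matches both the projectivization $p$ on the wave-function side and the horizontality constraint $\int_{S^1}\sigma\,\ud x=0$ on the fluid side. The cleanest route is to avoid recomputing $(\dot\rho,\dot\theta)$ in terms of $(v,\sigma)$ altogether, letting the identity $p\circ\Lambda={\bf\Phi}\circ\Pi$ transport the metric, so that equality of the metrics away from the identity is inherited from the two isometries $\Lambda$ and ${\bf\Phi}$ rather than checked by hand. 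A minor technical point to confirm is that $\Pi$ is a smooth submersion in the chosen ($H^s$ or Fr\'echet) topology, which follows from smoothness of $\varphi\mapsto\varphi'$ and of the product and exponential maps already invoked in \autoref{thm:madelung_symplectomorphism}.
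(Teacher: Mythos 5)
Your proposal is correct, and it reaches the conclusion by a somewhat different route than the paper. The shared core is the same key observation: the Lenells map \eqref{eq:lenells_map} factors as ${\bf\Phi}\circ\Pi$ through the Madelung transform, and the constraint $\int_{S^1}\sigma\,\ud x=0$ is exactly horizontality for $\Pi$. From there the paper proceeds computationally: it verifies by a change-of-variables calculation at an \emph{arbitrary} point $(\varphi,\alpha)$ that the pullback of the Sasaki--Fisher--Rao metric under $(\varphi,\alpha)\mapsto(\pi(\varphi),[\theta])$ reproduces $\tfrac14\int_{S^1}\left((v')^2+\sigma^2\right)\ud x$ on constrained velocities, and then concludes by citing Lenells' result that constrained solutions of \eqref{eq:two_HS_eq} are Fubini--Study geodesics, transported to $T^*\Dens^s(S^1)$ via the isometry of \autoref{thm:madelung_isometry}. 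You instead check the metric identification only at the identity, promote it to all of $\mathcal G$ abstractly by chasing $p\circ\Lambda={\bf\Phi}\circ\Pi$ through the two isometries and the Hopf-type submersion $p$ (so that $\Pi$ becomes a Riemannian submersion), and then finish with O'Neill's theorem together with the conservation of $\int_{S^1}\sigma\,\ud x$ along the flow. Your route buys two things: it needs only the \emph{isometry} part of Lenells' theorem, not his geodesic correspondence with $\mathbb{P}H^s(S^1,\CC)$, and it makes explicit both the submersion structure of $\Pi$ and the propagation of the constraint in time, which the paper leaves implicit inside the citation of \cite{Le2013b}. The paper's route buys concreteness: the hands-on pullback computation, with $v=\dot\varphi\circ\varphi^{-1}$ and $\sigma=\dot\alpha\circ\varphi^{-1}$, is what ties the Lagrangian group variables to the Eulerian ones without invoking submersion theory. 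One caveat applies equally to both arguments: $\alpha$ is circle-valued, so passing to the real-valued potential $\theta(x)=\int_0^x\alpha'(s)\,\ud s$ (or to your coset $[\alpha]$) tacitly restricts to winding number zero.
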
 
\begin{proof}
First, observe that the mapping \eqref{eq:lenells_map} can be rewritten as 
$(\varphi,\alpha) \mapsto {\bf\Phi}(\pi(\varphi),\alpha)$, 
where ${\bf\Phi}$ is the Madelung transform and $\pi$ is the projection 
$\varphi\mapsto \varphi^*\vol$ specialized to the case $M=S^1$. 

Next, observe that the metric \eqref{eq:lenells_metric} in the case $\int_{S^1}\sigma \ud x = 0$ 
is the pullback of the Sasaki metric \eqref{eq:sasaki_FR_metric} by the mapping 
\begin{equation*} 
\Diff_0^{s+1}(S^1)\ltimes H^s(S^1,S^1) 
\ni 
(\varphi,\alpha) \mapsto (\pi(\varphi),[\theta]) \in T^*\Dens^s(S^1), 
\end{equation*} 
where $\theta(x) = \int_0^x \alpha'(s)\ud s$. 
Indeed, we have 
\begin{align*} 
\MetF^*_{(\pi(\varphi),[\theta])}\bigg( \frac{\ud}{\ud t}\pi(\varphi), [\dot\alpha] \bigg) 
&= 
\frac{1}{4}\int_{S^1} \bigg( \Big( \frac{\dot\varphi'}{\varphi'} \Big)^2 
+ 
\dot\alpha^2 \bigg) \varphi' \,\ud x 
%\\ &= 
%\frac{1}{4}\int_{S^1} \left( \left(\frac{\dot\varphi_x\circ\varphi^{-1}}{\varphi_x\circ\varphi^{-1}}\right)^2 
%+ 
%(\dot\alpha\circ\varphi^{-1})^2 \right) \ud x 
\\ &= 
\frac{1}{4}\int_{S^1} \left( \left( \pd_x(\dot\varphi\circ\varphi^{-1}) \right)^2 
+ 
(\dot\alpha\circ\varphi^{-1})^2 \right) \ud x 
\\ &= 
\frac{1}{4}\int_{S^1} \left(  (v')^2 + \sigma^2 \right) \ud x. 
\end{align*} 

It follows from the change of variables formula by the diffeomorphism $\varphi$ that the condition 
$\int_{S^1}\sigma\ud x = 0$ corresponds to $\int_{S^1}\dot\alpha \varphi'\ud x = 0$. 
Hence, the description of the 2-component Hunter-Saxton equation as a geodesic equation 
on the complex projective $L^2$ space is a special case of that on $T^*\Dens^s(M)$ 
with respect to the Sasaki-Fisher-Rao metric \eqref{eq:sasaki_FR_metric}. 
\end{proof}

\begin{remark}
Observe that if $\sigma=0$ at $t=0$ then $\sigma(t)=0$ for all $t$ and 
the 2-component Hunter-Saxton \arxivorpnas{equation}{} \eqref{eq:two_HS_eq} 
reduces to the standard Hunter-Saxton equation. 
This is a consequence of the fact that horizontal geodesics on $T^*\Dens^s(M)$ 
with respect to the Sasaki-Fisher-Rao metric descend to geodesics on $\Dens^s(M)$ 
with respect to the Fisher-Rao metric.
% a consequence of the Sasaki-Fisher-Rao metric descending to the Fisher-Rao metric, as described in \autoref{rmk:sasaki_riemannian_submersion}.
\end{remark} 
% 

%%%%%%%%%%%%%%%%%%%%%%%%%%%%%%%%%%%%%%%%
\section{Madelung transform as a momentum map} 
\label{sec:momentum} 

In \autoref{sec:madelung}
we described the Madelung transform as a symplectomorphism 
from $T^*\Dens(M)$ to $\mathbb PH^s(M,\CC\backslash\{0\})$
which associates a wave function $\psi=\sqrt{\rho}e^{\ii\theta/2}$ (modulo a phase factor $e^{\ii \tau}$) 
to a pair $(\rho, [\theta])$ consisting of a density $\rho$ of unit mass 
and a function $\theta$ (modulo an additive constant). 
Here, we start by outlining (following \cite{Fu2017}) another approach, which shows that 
it is natural to regard the inverse Madelung transform as a momentum map 
from the space $\mathbb PH^s(M,\CC)$ of wave functions $\psi$ to the set of pairs 
$(\rho\,\ud\theta,\rho)$ regarded as elements of the dual space $\mathfrak s^*$ of a certain Lie algebra. 
The latter is a semidirect product $\mathfrak s=\Xcal(M)\ltimes H^s(M)$ corresponding to 
the Lie group $S=\Diff(M)\ltimes H^s(M)$. 
(In this section $\Diff(M)$ stands for Sobolev diffeomorphisms $\Diff^{s+1}(M)$
and $\Xcal(M)$ for vector fields $\Xcal^{s+1}(M)$.)

Furthermore, this construction generalizes to the vector-valued case 
$S_{(\ell)}=\Diff(M)\ltimes H^s(M,\mathbb{C}^\ell)$. 
For $\ell=2$ this group appears naturally in the description of general compressible fluids 
including transport of both density and entropy. 
The case $\ell>1$ provides also a setting for quantum systems with spin degrees of freedom. 
For example, $\ell=2$ (a rank-1 spinor) describes fermions with spin $1/2$ 
(such as electrons, neutrons, and protons). 
% \todo[inline,author=KM]{From a quantum mechanical point-of-view, the setting $l=2$ implies that it describes a fermion with spin $1/2$ (such as electrons, neutrons, and protons).}

In \autoref{sect:general-semi} below we present a unifying point of view which explains the origin 
of the Madelung transform as the momentum map in a semidirect product reduction. 

%%%%%%%%%%

\begin{subsection}{A group action on the space of wave functions}
We start by defining a group action on the space of wave functions. 
First, observe that it is natural to think of $H^s(M,\CC)$ as a space of complex-valued 
half-densities on $M$. Indeed, $\psi\in H^s(M,\CC)$ is assumed to be square-integrable 
and $|\psi|^2$ is interpreted as a probability measure.
Half-densities are characterized by how they are transformed under diffeomorphisms 
of the underlying space: the pushforward $\varphi_{*}\psi$ of a half-density $\psi$ on $M$ 
by a diffeomorphism $\varphi$ of $M$ is given by the formula 
$$ 
\varphi_*\psi = \sqrt{|\mathrm{Det}(D\varphi^{-1})|}\, \psi \circ \varphi^{-1}. 
$$ 

This formula explains the following natural action of a semidirect product group on the vector space 
of half-densities.
\begin{definition}\cite{Fu2017}\label{def:action}
	The semidirect product group $S=\mathrm{Diff}(M)\ltimes H^s(M)$ {\it acts on the  space} 
	$H^s(M,\CC)$  as follows: 
	for a group element $(\varphi,a)\in S$ the action on wave functions $\psi$ is
	\begin{equation}\label{eq:grpact}
			% (\varphi,a): \psi \mapsto 
			(\varphi,a) \circ \psi = \sqrt{|\mathrm{Det}(D\varphi^{-1})|}\, e^{-\ii a/2} (\psi \circ \varphi^{-1}).
\end{equation} 
This action descends to the space of cosets $[\psi]\in \mathbb P H^s(M,\CC)$.
	 % then all representatives $\psi\in[\psi]$ map to one and the same coset, so the action on $H^s(M,\CC)$ also induces an action on $\mathbb P H^s(M,\CC)$.
\end{definition}
% \todo[inline,author=KM]{We need to check that it is an action on $\mathbb P H^s(M,\CC)$ (not just $H^s(M,\CC)$).
% Probably this is true but I didn't check it carefully.
% }

Thus, a wave function $\psi$ is pushed forward under the diffeomorphism $\varphi$ as a complex-valued half-density, followed by a pointwise phase adjustment by $e^{-\ii a/2}$. 
An easy computation gives the following Lie algebra action 
\begin{proposition} 
The infinitesimal action of $\xi=(v,\alpha) \in \mathfrak{s}$ on $H^s(M,\CC)$ corresponding to 
the action \eqref{eq:grpact} is represented by the vector field $V_\xi$ on $H^s(M,\CC)$ 
defined at each point $\psi$ by
\begin{equation}\label{eq:infact} 
%		V_\xi(\psi) = -\frac{1}{2} \psi \nabla \cdot v - \ii\alpha \psi - \nabla \psi \cdot v. 
V_\xi(\psi) = -\frac{1}{2} \psi \divv(v)  - \frac{\ii}{2} \alpha \psi - \iota_v\ud \psi. 
%		\LieD_u\rho \right)\vol = -\divv(\rho u)\vol. 
\end{equation} 
\end{proposition}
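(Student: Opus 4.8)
The plan is to obtain $V_\xi$ by differentiating the action \eqref{eq:grpact} along a one-parameter family in $S$ that realizes $\xi=(v,\alpha)$ at the identity. I would choose the path $t\mapsto(\varphi_t,a_t)$ with $\varphi_t$ the flow of $v$ and $a_t=t\alpha$, so that $(\varphi_0,a_0)=(\id,0)$ and $\frac{\ud}{\ud t}\big|_{t=0}(\varphi_t,a_t)=(v,\alpha)$. Since the infinitesimal generator of a group action depends only on the velocity of the path through the identity, this choice suffices, and by definition $V_\xi(\psi)=\frac{\ud}{\ud t}\big|_{t=0}\big[(\varphi_t,a_t)\circ\psi\big]$. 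The point of the computation is that \eqref{eq:grpact} presents this image as a product of three factors: the half-density Jacobian $J_t=|\mathrm{Det}(D\varphi_t^{-1})|^{1/2}$, the phase $e^{-\ii a_t/2}$, and the transported wave function $\psi\circ\varphi_t^{-1}$.

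The core step is to apply the Leibniz rule to this triple product at $t=0$, where the three factors equal $1$, $1$, and $\psi$ respectively. The single input driving every term is that the inverse flow has initial velocity $\frac{\ud}{\ud t}\big|_{t=0}\varphi_t^{-1}=-v$, obtained by differentiating $\varphi_t\circ\varphi_t^{-1}=\id$. For the phase factor this yields $\frac{\ud}{\ud t}\big|_{t=0}e^{-\ii a_t/2}=-\frac{\ii}{2}\alpha$, contributing $-\frac{\ii}{2}\alpha\psi$. For the transport factor the chain rule gives $\frac{\ud}{\ud t}\big|_{t=0}(\psi\circ\varphi_t^{-1})=\ud\psi(-v)=-\iota_v\ud\psi$. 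For the Jacobian factor I would use the standard variational (Liouville) formula $\frac{\ud}{\ud t}\big|_{t=0}\mathrm{Det}(D\eta_t)=\divv(w)$, valid for any flow $\eta_t$ with $\eta_0=\id$ and initial velocity $w$; applied with $\eta_t=\varphi_t^{-1}$ and $w=-v$, and combined with differentiating the square root at the value $1$, this produces $\dot J_0=\frac{1}{2}\divv(-v)=-\frac{1}{2}\divv(v)$, hence the term $-\frac{1}{2}\psi\divv(v)$. Summing the three contributions gives precisely \eqref{eq:infact}.

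Two minor points deserve care but pose no real obstacle. First, for $t$ near $0$ the flow $\varphi_t$ is orientation-preserving, so $\mathrm{Det}(D\varphi_t^{-1})>0$ and both the absolute value and the square root are smooth there, legitimizing the differentiation of $J_t$. Second, the divergence and the Jacobian determinant must be taken with respect to the same reference volume form $\vol$ for the Liouville-type identity to hold as stated. The only genuinely delicate bookkeeping is the propagation of the minus sign from $\frac{\ud}{\ud t}\big|_{t=0}\varphi_t^{-1}=-v$ into both the divergence and the transport terms, together with confirming that the factor $\frac{1}{2}$ survives differentiation of the square root; once these signs are fixed the result follows by direct substitution.
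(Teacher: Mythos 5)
Your proof is correct: differentiating the action \eqref{eq:grpact} along the path $(\varphi_t, t\alpha)$ with $\varphi_t$ the flow of $v$, and applying the Leibniz rule to the three factors with $\frac{\ud}{\ud t}\big|_{t=0}\varphi_t^{-1}=-v$ and the Liouville formula for the Jacobian, yields exactly \eqref{eq:infact}. This is precisely the ``easy computation'' the paper alludes to but does not write out, so your argument matches the intended proof; your added care about the sign propagation, the smoothness of the square root near $t=0$, and taking $\divv$ with respect to the same reference volume form as the determinant are all appropriate.
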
 
% 
% For $M=\RR^n$ one gets $V_\xi(\psi) = -\frac{1}{2} \psi \nabla \cdot v - \ii \alpha \psi - \nabla \psi \cdot v.$
\end{subsection}

%%%%%%%%%%%%%
\begin{subsection}{The inverse of the Madelung transform} 

Consider the following alternative definition of the inverse  Madelung transform, 
which will be our primary object here. 
Let $\Omega^1(M)$ denote the space of 1-forms on $M$ of Sobolev class $H^s$. 
Recall the definition~\eqref{eq:madelung_def} of the Madelung transform: 
$(\rho,\theta) \mapsto \psi = \sqrt{\rho e^{i\theta}}$, 
where $\rho>0$.

\begin{proposition}{\rm \cite{Fu2017}}
The map  
\begin{align}\label{eq:mad}
{\bf M}\colon &H^s(M,\CC) \to \Omega^1(M)\times\Dens^s(M) 
\end{align} 
given by 
$$ 
\psi\mapsto(m, \rho) = \big( 2\,\Im ( \bar \psi\,\ud \psi) ,\bar \psi \psi \big) 
$$ 
is the inverse of the Madelung transform~\eqref{eq:madelung_def} in the following sense:
if $\psi = \sqrt{\rho e^{\ii\theta}}$ then ${\bf M}(\psi) = (\rho\,\ud\theta , \rho)$. 
\end{proposition}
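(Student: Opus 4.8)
The plan is to verify the asserted identity by a direct substitution, since the content of the statement is exactly that the explicit formula for ${\bf M}$ reproduces the cotangent data on the image of the Madelung transform \eqref{eq:madelung_def}. First I would take a nowhere-vanishing $\psi$ in polar form $\psi = \sqrt{\rho}\,\ee^{\ii\theta/2}$ with $\rho>0$ and compute the two components of ${\bf M}(\psi)$ separately. The density component is immediate: $\bar\psi\psi = \abs{\psi}^2 = \rho$. For the momentum one-form I would differentiate $\psi$ exactly as in \eqref{eq:grad_psi}, now at the level of the exterior derivative, obtaining
\[
\ud\psi = \ee^{\ii\theta/2}\Big( \ud\sqrt{\rho} + \tfrac{\ii}{2}\sqrt{\rho}\,\ud\theta \Big),
\]
so that upon multiplying by $\bar\psi = \sqrt{\rho}\,\ee^{-\ii\theta/2}$ the phase cancels and
\[
\bar\psi\,\ud\psi = \sqrt{\rho}\,\ud\sqrt{\rho} + \tfrac{\ii}{2}\,\rho\,\ud\theta = \tfrac{1}{2}\ud\rho + \tfrac{\ii}{2}\,\rho\,\ud\theta.
\]
Since $\rho$ is real, the first term is real and contributes nothing to the imaginary part; hence $2\,\Im(\bar\psi\,\ud\psi) = \rho\,\ud\theta = m$, which is precisely the claim.

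Before concluding I would record the well-definedness of the codomain. Positivity of $\rho = \abs{\psi}^2$ follows from $\psi$ being nowhere vanishing, and the normalization $\int_M \rho\,\vol = \norm{\psi}_{L^2}^2 = 1$ holds on the unit $L^2$-sphere, so $\rho\in\Dens^s(M)$ there. I would also track regularity: $\rho$ is a product of $H^s$ functions and is therefore $H^s$ for $s>n/2$, whereas $m$ involves $\ud\psi$ and so naturally lives one derivative lower; this is harmless, but for $m\in\Omega^1(M)$ as stated one either accepts the derivative shift or assumes $\psi\in H^{s+1}$.

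There is no genuine analytic obstacle here; the only point that requires care is the precise sense in which ${\bf M}$ is an \emph{inverse}. Because the Madelung transform is not injective on functions (see the remark following Definition \ref{def:madelung}), ${\bf M}$ cannot recover $\theta$ itself, only the one-form $m = \rho\,\ud\theta$, equivalently $\ud\theta = m/\rho$ on the locus where $\rho>0$. What makes the formula intrinsic is that $m = 2\,\Im(\bar\psi\,\ud\psi)$ is manifestly assembled from the globally defined $\psi$, with the undetermined constant phase of $\psi$ dropping out under differentiation; in particular $m$ is a globally defined one-form even when the phase of $\psi$ is only locally single-valued. Matching ${\bf M}(\psi)$ back to the cotangent data $(\rho,[\theta])$ used earlier then amounts to the observation that on the image of the Madelung transform $m/\rho$ is the exact form $\ud\theta$, so that the pair $(m,\rho)$ carries exactly the information of $(\rho,[\theta])$ --- the coset ambiguity in $\theta$ corresponding to the constant of integration lost in passing to $\ud\theta$.
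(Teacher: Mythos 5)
Your proof is correct and takes essentially the same route as the paper: a direct substitution of the polar form $\psi=\sqrt{\rho}\,\ee^{\ii\theta/2}$, with the paper computing $\Im(\bar\psi\,\ud\psi)=\rho\,\Im\,\ud(\ln\psi)=\rho\,\ud\theta/2$ via the logarithmic derivative where you use the Leibniz rule---an immaterial difference. Your closing discussion of how $(m,\rho)$ recovers $(\rho,[\theta])$ only up to the additive constant in $\theta$ matches the paper's concluding remark, and your regularity caveat, while not in the paper, is a harmless and accurate observation.
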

\begin{proof} For $\psi=\sqrt{\rho}e^{\ii\theta/2}$ one evidently has $\bar \psi \psi=\rho$.
The expression for the other component follows from the observation 
$$ 
\Im \, \bar \psi \,\ud \psi 
=
\bar \psi \psi\, \Im \,  \ud\,( \ln \psi) = \rho\, \Im \,  \ud\, ((\ln \sqrt{\rho}) + {\ii\theta/2}) 
= 
\rho \,\ud \theta/2. 
$$
These two components allow one to obtain $\rho$ and $\rho \,\ud\theta$ and hence, by integration, 
to recover $\theta$ modulo an additive constant. 
(The ambiguity involving an additive constant in the definition of $\theta$ corresponds to 
recovering the wave function $\psi$ modulo a constant phase factor). 
\end{proof}
For a positive function $\rho$ satisfying $\int_M\rho \,\vol = 1$
the pair $(\rho\,\ud\theta, \rho)$ can be identified with $(\rho,[\theta])$ in $T^*\Dens(M)$, 
where the momentum variable $m=\rho\,\ud\theta$ 
% or, rather, $m=\rho\,\ud\theta\otimes \vol$ 
is naturally thought of as an element of $\Xcal(M)^*$. 
Note, however, that this definition of the inverse Madelung works in greater generality: 
the momentum variable $m$ is defined even when $\rho$ is allowed to be zero, 
although $\theta$ cannot be recovered there. 
\begin{remark} %\todo[author=KM]{Remove remark here? Part of text instead.}
So far we viewed $\psi$ as a function on an $n$-manifold $M$.
One can also consistently regard  $\psi$ as a complex half-density $\varpi = \psi\,\vol^{1/2}$.
The set of complex half-densities on $M$ is denoted $\sqrt{\Omega^{n}}(M)\otimes \CC$ 
indicating that it is ``the square root" of the space  $\Omega^n(M)$ of $n$-forms. 
% 
% \todo[author=G]{Better $\Omega^{\frac{n}{2}}(M)\otimes \CC$?} 
% 
Then  the map ${\bf M}$ in \eqref{eq:mad}
can be understood as follows. % $ 	{\bf M} : \psi\,\vol^{1/2} \mapsto(m, \rho)\otimes \vol$ 
 % = (2\,{\rm Im} \, (\bar \psi \,\ud  \psi) ,\bar \psi \psi )$ and understood as follows. 
For a half-density $\varpi\in\sqrt{\Omega^{n}}(M)\otimes \CC$ the second component $\bar{\varpi} \varpi$ 
of the map $\bf M$ is understood as a tensor product $(\bar\psi\psi)\,\vol=\rho\,\vol$ 
of two half-densities on $M$, thus yielding the density $\rho\in \Dens^s(M)$. 
One can show that the first component $ \Im \, (\bar \varpi \,\ud \varpi )$ of ${\bf M}$ 
can be regarded as an element $m\otimes \vol=\rho\,\ud\theta\otimes \vol\in
\Omega^1(M)\otimes_{H^s(M)} \Omega^n(M)$.  Namely, given a reference density 
$\vol$, for any half-density $\varpi=f(x)\vol^{1/2}$
define its differential $\ud\varpi := \ud f(x)\otimes  \vol^{1/2}$. While the differential $\ud\varpi$ depends 
on the choice of the reference density, the momentum map does not. 
\end{remark}
\begin{proposition}
For any half density $\varpi=f(x)\vol^{1/2}\in \sqrt{\Omega^{n}}(M)\otimes \CC$ 
the momentum
$2 \Im \, (\bar \varpi \ud \varpi  ) =  2\Im \,\bar f \ud f\otimes  \vol$
is a well defined element of $\Omega^1(M)\otimes_{H^s(M)} \Omega^n(M)$
and does not depend on the choice of the reference density $\vol$.
\end{proposition}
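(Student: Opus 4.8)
The plan is to unwind the definitions and reduce everything to the single observation that the correction term produced by a change of reference density is a \emph{real} multiple of an exact form, hence invisible to $\Im$.

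First I would verify well-definedness. With $\varpi = f\,\vol^{1/2}$ and the prescribed differential $\ud\varpi = \ud f\otimes\vol^{1/2}$, a direct computation gives $\bar\varpi\,\ud\varpi = \bar f\,\ud f\otimes\vol$, so that
$$
2\Im(\bar\varpi\,\ud\varpi) = 2\,\Im(\bar f\,\ud f)\otimes\vol.
$$
Since $\Im(\bar f\,\ud f)$ is a real $1$-form of Sobolev class $H^{s-1}$ (here $s>n/2$ guarantees that the relevant Sobolev products and derivatives behave well) and $\vol$ is an $n$-form, the right-hand side indeed lies in $\Omega^1(M)\otimes_{H^s(M)}\Omega^n(M)$.

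The substance of the statement is independence of the reference density. I would pick a second reference density $\tilde\vol = h\,\vol$, where $h\in H^s(M)$ is strictly positive (any two densities on the connected oriented $M$ differ by such a factor), and re-express the \emph{same} half-density as $\varpi = \tilde f\,\tilde\vol^{1/2}$ with $\tilde f = h^{-1/2} f$. Computing the new differential by the Leibniz rule,
$$
\ud\tilde f = h^{-1/2}\,\ud f - \tfrac{1}{2}\,f\,h^{-3/2}\,\ud h,
$$
so that
$$
\bar{\tilde f}\,\ud\tilde f = h^{-1}\,\bar f\,\ud f - \tfrac{1}{2}\,\abs{f}^2\,h^{-2}\,\ud h.
$$
The crucial point---and essentially the only place where anything happens---is that the second term is a real function times the real $1$-form $\ud h$, hence purely real; it is therefore annihilated by $\Im$. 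Consequently $\Im(\bar{\tilde f}\,\ud\tilde f) = h^{-1}\,\Im(\bar f\,\ud f)$.

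Finally I would use the $H^s(M)$-linearity built into the tensor product $\otimes_{H^s(M)}$ to move the scalar $h$ across the tensor sign:
$$
\Im(\bar{\tilde f}\,\ud\tilde f)\otimes\tilde\vol = h^{-1}\,\Im(\bar f\,\ud f)\otimes(h\,\vol) = \Im(\bar f\,\ud f)\otimes\vol,
$$
which is exactly the expression obtained with the original reference $\vol$. This shows that $2\Im(\bar\varpi\,\ud\varpi)$ does not depend on the choice of $\vol$. The main obstacle is the reality of the $\ud h$-correction term; once that cancellation is recognized, the remainder is bookkeeping with the tensor product over $H^s(M)$.
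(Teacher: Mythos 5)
Your proof is correct and follows essentially the same route as the paper's: both expand the differential of the rescaled coefficient by the Leibniz rule, observe that the correction term involving $\ud h$ is a real function times a real $1$-form and hence is annihilated by $\Im$, and then move the scalar $h$ across the tensor product over $H^s(M)$. The only difference is cosmetic---the paper writes $f = g\sqrt{h}$ and converts from $\vol$ to $\nu = h\,\vol$, whereas you write $\tilde f = h^{-1/2}f$ and convert in the opposite direction.
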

\begin{proof} 
Given a different reference volume form $\nu=h(x) \vol$ with a positive function $h>0$ 
one has $\varpi=f(x)\vol^{1/2}=g(x)\nu^{1/2}=g(x)(h(x)\vol)^{1/2}$, where $f(x)=g(x)\sqrt{h(x)}$ 
and 
\begin{align*} 
\Im \, (\bar \varpi \ud \varpi  ) 
&=  
\Im \,\bar f \ud f \otimes \vol 
= 
\Im \,\bar g \sqrt{h}\,  d(g \sqrt{h}) \otimes \vol 
\\ 
&= 
\Im \,(\bar g \sqrt{h} \sqrt{h} \,\ud g + \bar g  \sqrt{h}  g \,\ud (\sqrt{h}))\otimes  \vol 
\\ 
&= 
\Im \,\bar g h \, \ud g \otimes \vol = \Im \,\bar g \,\ud g \otimes_{H^s(M)} (h \vol) 
\\ 
&= 
\Im \,\bar g  \ud g \otimes  \nu, 
\end{align*} 
where we dropped the term with $\bar g  g \sqrt{h}   \,\ud (\sqrt{h})$ since it is purely real.
\end{proof} 
\begin{remark} 
The pair $(m, \rho)\otimes \vol=( \rho\,\ud \theta\otimes \vol, \rho\, \vol)$ 
is understood as an element of the space
$\mathfrak s^*=\Omega^1(M)\otimes_{H^s(M)} \Omega^n(M) \oplus \Omega^n(M)$ 
dual to the Lie algebra $\mathfrak s = \Xcal(M)\ltimes H^s(M)$, 
while the inverse Madelung transformation is a map ${\bf M} \colon H^s(M,\CC) \to \mathfrak s^*$. 
Note that the dual space $\mathfrak s^*$ has a natural Lie-Poisson structure 
(as any dual Lie algebra).\end{remark}
\end{subsection} 

%%%%%%%%%%%% 
\begin{subsection}{A reminder on momentum maps}
In the next section we  show that the inverse Madelung transform \eqref{eq:mad} is a momentum map 
associated with the action \eqref{eq:grpact} of the Lie group $S=\mathrm{Diff}(M)\ltimes H^s(M)$ 
on $H^s(M,\CC)$. We start by recalling the definition of a momentum map. 

Suppose that a Lie algebra $\mathfrak g$ acts on a Poisson manifold $P$ 
and denote its action by $A\colon\mathfrak g \to \mathfrak{X}(P)$ where $A(\xi)=\xi_P$. 
Let $\langle \, , \rangle$ denote the pairing of $\mathfrak g$ and $\mathfrak g^*$. 
\begin{definition}\label{def:mommap}
A \textit{momentum map} associated with a Lie algebra action $A(\xi)=\xi_P$ is a map 
${\mathcal M}\colon P \to \mathfrak g^*$ such that for every $\xi \in \mathfrak g$ 
the function $H_{\xi} \colon P \to \RR$ defined by 
$H_{\xi}(p) \coloneqq \langle {\mathcal M}(p), \xi \rangle$ 
for any $p\in P$ is a Hamiltonian of the vector field $\xi_P$ on the Poisson manifold $P$, i.e., 
$X_{H_{\xi}}(p) = \xi_P(p)$. 
\end{definition}
Thus, Lie algebra actions that admit momentum maps are Hamiltonian actions 
and the pairing of the momentum map at a point $p\in P$ with an element $\xi \in \mathfrak g$ 
defines a Hamiltonian function associated with the Hamiltonian vector field $\xi_P$ at that point $p$. 

\smallskip

A  momentum map  ${\mathcal M}\colon P \to \mathfrak g^*$  of a Lie algebra $\mathfrak g$ 
is \textit{infinitesimally equivariant} if for all $\xi,\,\eta \in \mathfrak g$ one has
$H_{[\xi , \eta ]} = \{ H_{\xi} , H_\eta\}$, which means that not only for any Lie algebra vector defines 
a Hamiltonian vector field on the manifold, but also the Lie algebra bracket of two such fields 
corresponds to the Poisson bracket of  their Hamiltonians. 
\end{subsection}
% 
% 

%%%%%%%%%%%%%
\begin{subsection}{Madelung transform is a momentum map} 
\label{sec:madelung-moment} 

We now show (following Fusca~\cite{Fu2017}) that the transformation $\bf M$ is a momentum map 
associated with the action \eqref{eq:infact}. 

First note that 
the vector space  $H^s(M,\CC) \subset L^2(M,\CC)$ of Sobolev wave functions on $M$
is naturally equipped with 
the symplectic (and hence Poisson) structure 
$\{{F},{G}\}(\psi) =\inner{ \nabla F ,-\ii \nabla G }_{L^2}=\inner{ \ud F ,J \ud G }_{L^2}$. 
%\inner{\cdot,\cdot}_{L^2(M,\CC)}
This structure is related to the natural 
Hermitian inner product on $L^2(M,\CC)$:  $\inner{ f, g}_{L^2}:=\int_M f \bar g \,\vol$ and the complex structure of multiplication  by $\ii$.
Now define the Hamiltonian function 
$H_\xi \colon H^s(M,\CC) \to \RR^{}$ by 
$H_\xi(\psi) \coloneqq \langle {\bf M} (\psi) , \xi \rangle$.
\begin{theorem} \label{thm:madmoment}{\rm \cite{Fu2017}} 
For the Lie algebra $\mathfrak s = \mathfrak{X}(M)\ltimes H^s(M,\RR)$ 
its action \eqref{eq:infact} on the Poisson space $H^s(M,\CC) \subset  L^2(M,\mathbb{C})$ 
admits a momentum map. 
The inverse Madelung transformation ${\bf M} \colon H^s(M,\CC) \to \mathfrak s^*$ 
defined by \eqref{eq:mad} is, up to scaling by $4$, a momentum map associated with this Lie algebra action.
\end{theorem}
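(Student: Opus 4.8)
The plan is to verify the defining property of Definition~\ref{def:mommap} directly: for each $\xi=(v,\alpha)\in\mathfrak{s}$ I must show that the function $H_\xi(\psi)=\langle\mathbf{M}(\psi),\xi\rangle$ is a Hamiltonian for the infinitesimal generator $V_\xi$ of \eqref{eq:infact}, i.e.\ that $X_{H_\xi}=V_\xi$ up to the overall constant $4$. First I would write the pairing out explicitly. Using $\mathbf{M}(\psi)=(m,\rho)=\big(2\,\Im(\bar\psi\,\ud\psi),\,\bar\psi\psi\big)$ together with the duality between $\mathfrak{s}^{*}=\big(\Omega^1(M)\otimes_{H^s(M)}\Omega^n(M)\big)\oplus\Omega^n(M)$ and $\mathfrak{s}=\Xcal(M)\ltimes H^s(M)$, and the fact that $m(v)=2\,\Im(\bar\psi\,\iota_v\ud\psi)$, this gives
\[
H_{(v,\alpha)}(\psi)=\int_M\Big(2\,\Im\big(\bar\psi\,\iota_v\ud\psi\big)+\alpha\,\abs{\psi}^2\Big)\vol .
\]

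Next I would compute the differential $\ud H_\xi(\psi)[\delta\psi]$ by varying $\psi$. The potential term $\int_M\alpha\abs{\psi}^2\vol$ varies immediately to $2\,\Re\!\int_M\alpha\bar\psi\,\delta\psi\,\vol$, and after applying the complex structure it will contribute the $-\tfrac{\ii}{2}\alpha\psi$ piece of \eqref{eq:infact}. The transport term is the delicate one: varying $2\,\Im(\bar\psi\,\iota_v\ud\psi)$ produces contributions in which the derivative falls on the variation, namely $\iota_v\ud(\delta\psi)=\LieD_v(\delta\psi)$, and these must be integrated by parts. Here I would use Cartan's formula with $\int_M\LieD_v(h\,\vol)=0$ on the closed manifold $M$, which yields
\[
\int_M f\,\LieD_v g\,\vol=-\int_M g\,\LieD_v f\,\vol-\int_M fg\,\divv(v)\,\vol .
\]
It is precisely this integration by parts that generates the divergence term $\divv(v)$, thereby encoding the half-density transformation law built into the action \eqref{eq:grpact}.

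Having isolated the coefficient of $\delta\psi$, I would read off the $L^2$-gradient $\nabla H_\xi$ and then form the Hamiltonian vector field $X_{H_\xi}=J\,\nabla H_\xi$ from the Poisson structure $\{F,G\}=\inner{\ud F,J\ud G}_{L^2}$ recorded above, with $J$ multiplication by $\ii$ (the precise sign being fixed once and for all by the convention for $\Omega$). Collecting the three resulting contributions I expect
\[
X_{H_\xi}(\psi)=-2\,\psi\,\divv(v)-2\ii\,\alpha\psi-4\,\iota_v\ud\psi=4\,V_\xi(\psi),
\]
which is \eqref{eq:infact} up to the scaling factor $4$ announced in the statement; consequently $\tfrac14\mathbf{M}$ is a genuine momentum map. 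I would close by noting that infinitesimal equivariance, $H_{[\xi,\eta]}=\{H_\xi,H_\eta\}$, follows either from a parallel computation using the semidirect-product bracket of $\mathfrak{s}$, or more conceptually from the general reduction picture developed in Section~\ref{sect:general-semi}.

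The hard part will be the transport term: tracking the complex structure through the $\Im$ operation simultaneously with the integration by parts in $v$, and confirming that the $\divv(v)$ contribution emerges with exactly the right coefficient so that all three terms of \eqref{eq:infact} assemble with a single common factor. Everything else is routine once the pairing is written out and the sign conventions for $\Omega$ and $J$ are fixed.
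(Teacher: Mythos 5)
Your proposal is correct and follows essentially the same route as the paper's proof: write out $H_{(v,\alpha)}(\psi)$ explicitly, vary in $\psi$, integrate the transport term by parts on the closed manifold to produce the $\divv(v)$ contribution, apply the complex structure to obtain $X_{H_\xi}$, and compare with \eqref{eq:infact} to get $X_{H_\xi}=4V_\xi$. The paper executes the same computation with a test function $\phi$ and real-part/conjugation manipulations, but the decomposition of terms, the key integration by parts, and the resulting factor of $4$ are identical to what you describe.
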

\begin{proof}
The Hamiltonian vector field for the function $H_{\xi}$ is 
$X_{H_{\xi}} = -\ii \,\ud H_\xi$ where the differential is obtained from 
\begin{equation*} 
\pair{\ud H_\xi(\psi),\phi}
= 
\Re \, \inner{ \ud H_\xi(\psi),\phi }_{L^2} 
= 
\frac{\ud}{\ud t}\Big|_{\epsilon=0}H_\xi(\psi + \epsilon\phi) 
\end{equation*} 
for any $\phi$ in $H^s(M,\CC)$. 
Let $\xi = (v,\alpha)$ be an element of $\mathfrak s = \mathfrak{X}(M)\ltimes H^s(M)$ 
whose pairing with $(m, \rho)\in \mathfrak s^*$ is 
$\langle (v,\alpha), (m, \rho)\rangle:=\int_{M} (\rho \cdot \alpha + m\cdot v )\, \vol$. 
We have
\begin{align*}
H_\xi(\psi) 
&= 
\int_{M}( {\bf M} (\psi)^\rho\cdot \alpha + {\bf M} (\psi)^m \cdot v )\,\vol 
\\
&= 
\int_{M}( \bar\psi  \psi \alpha + 2\Im (\iota_v\bar\psi \,  \ud\psi ))\, \vol 
\\
&= 
\Re  \int_{M}( \bar\psi  \psi \alpha - 2\ii \,\iota_v\bar\psi \,  \ud\psi )\, \vol. 
\end{align*} 
To find the variational derivative let $\phi \in H^s(M,\CC)$ be a test function. 
Then 
\begin{align*} 
\frac{d}{d\epsilon} H_\xi(\psi + \epsilon \phi)|_{\epsilon = 0}
&=  
\Re  \int_{M} \Big( \bar\psi \phi \alpha + \bar\phi \psi \alpha 
- 
2\ii \,\iota_v\bar\phi \,  \ud\psi - 2\ii \,\iota_v\bar\psi \,  \ud\phi \Big) \vol 
\\ 
&= 
\Re  \int_{M} \Big( 2 \bar\phi \psi \alpha - 2\ii \bar\phi \,\iota_v\,\ud \psi + 2\ii \phi \divv(\bar\psi v) \Big) \vol 
\\ 
&= 
\Re  \int_{M} \Big( 2 \bar\phi \psi \alpha + \overline{2\ii \phi \iota_v \ud\bar\psi} 
+ 
2\ii \phi \bar\psi\divv(v) + 2\ii \phi \iota_v \ud\bar\psi \Big) \vol 
\\ 
&= 
\Re  \int_{M} \Big( 2 \psi \alpha - 4\ii \,\iota_v\,\ud \psi - 2\ii \psi \divv( v) \Big) \bar\phi \, \vol 
\\ 
&= 
\Re\, \inner{ 2 \psi \alpha - 4\ii \,\iota_v\,\ud \psi - 2\ii \psi \divv( v),\phi }_{L^2}, 
\end{align*} 
so that $\ud H_{\xi}(\psi) = 2 \psi \alpha - 2\ii \psi \divv( v) - 4\ii  \,\iota_v\,\ud \psi$. 
This implies that 
\begin{align*} 
X_{H_{\xi}}(\psi) = -2\ii\alpha \psi -4\iota_v\,\ud \psi  - 2 \psi \divv{v} 
\end{align*} 
and comparing with \eqref{eq:infact} one obtains $X_{H_{\xi}}(\psi) = 4V_\xi(\psi)$. 
\end{proof}
Moreover, the Madelung transform turns out to be an infinitesimally equivariant
 momentum map, as was verified in \cite{Fu2017}. (Recall that its equivariance means morphism of the 
 Lie algebras: the Hamiltonian of the Lie  bracket of two  fields is the Poisson bracket of their Hamiltonians.) 
In particular, it follows that the Madelung transform is also a Poisson map taking the Poisson structure 
on $P$ (up to scaling by 4) to the Lie-Poisson structure on $\mathfrak g$, i.e., 
the map ${\bf M} \colon H^s(M,\CC) \to \mathfrak s^*$ is infinitesimally equivariant for the action 
on $H^s(M,\CC)$ of the semidirect product Lie algebra $\mathfrak s$.
This result is expected from the symplectomorphism result in \autoref{thm:madelung_symplectomorphism} 
since $T^*\Dens^s(M)$ is a coadjoint orbit in $\mathfrak s^*$ via $(\rho,[\theta])\mapsto (\rho\,\ud\theta,\rho)$.
\end{subsection}

%%%%%%%%%%%%%
\begin{subsection}{Multi-component Madelung transform as a momentum map}\label{sec:v-madelung}

There is a natural generalization of the above approach to the space of wave vector-functions
$\psi\in H^s(M,\CC^\ell)$, notably rank 1 spinors for which $\ell =2$.
One needs to define an action of the group $S_{(\ell)}=\Diff(M)\ltimes H^s(M)^\ell$ on the  subspace of smooth vector-functions.

\begin{definition}
	The semidirect product group $S_{(\ell)}=\Diff(M)\ltimes H^s(M)^\ell$ {\it acts on the  space} 
	$\mathbb PH^s(M,\CC^{\ell})$  as follows: if $(\varphi,a)\in S_{(\ell)}$ is a group element, where $\varphi$ is a diffeomorphism,  $\tilde a=(a_1,...,a_\ell)$ is a vector, and $\tilde\psi=(\psi_1,...,\psi_\ell)$ is a smooth wave vector-function, then
\begin{equation}\label{eq:m-grpact}
		(\varphi,a_1,...,a_\ell): \psi_k \mapsto (\varphi,\tilde a) \circ \psi_k := \sqrt{|\mathrm{Det}(D\varphi^{-1})|}\, e^{-ia_k/2} (\psi_k \circ \varphi^{-1})
\end{equation}
for $k=1,...,\ell$.
The corresponding Lie algebra is denoted $\mathfrak s_{(\ell)}$.
\end{definition}
% \todo[inline,author=KM]{Maybe add a remark on the link to natural pointwise (rotation) action on spinors.
% Idea: replace the additive group $H^s(M,\RR^\ell)$ by $H^s(M,\mathrm{U}(\ell))$ (or some subgroup of $\mathrm{U}(\ell,\CC)$).
% }
% 
\begin{definition}
The (inverse) {\it multicomponent Madelung transform} is the map 
${\bf M}^{(\ell)} \colon H^s(M,\CC^{\ell}) \to \mathfrak s_{(\ell)}^*$ 
defined by 
${\bf M}^{(\ell)}(\tilde \psi) = ( m  ,\tilde \rho )$, 
where 
$ 
m = 2 \sum_{k=1}^\ell \Im\,(\bar\psi_k \,\ud \psi_k)
$ 
and $\tilde \rho=(\rho_1,\ldots,\rho_\ell)$ with $\rho_k\coloneqq\bar\psi_k \psi_k$. 
\end{definition} 
Here, as before, we have 
$ 
m\otimes \vol\in \Omega^1(M)\otimes_{H^s(M)} \Omega^n(M) 
$ 
while for each $k=1, \dots \ell$ we have $\rho_k\in H^s(M)$, so that 
$( m,\tilde \rho ) \otimes \vol\in \mathfrak s_{(\ell)}^*$. 

Similarly, the space $H^s(M,\CC^{\ell})$ has a natural symplectic (and hence Poisson) structure 
and one can prove a multicomponent version of \autoref{thm:madmoment}. 
\begin{theorem}\label{thm:m-madmoment}
For the Lie algebra $\mathfrak s_{(\ell)} = \mathfrak{X}(M)\ltimes H^s(M)^\ell$, 
its action \eqref{eq:m-grpact} on the Poisson space $H^s(M,\CC^{\ell})$ admits a momentum map. 
The map ${\bf M}^{(\ell)} \colon H^s(M,\CC^{\ell}) \to \mathfrak s_{(\ell)}^*$  is a momentum map 
associated with this Lie algebra action. 
The Madelung transform is (up to scaling by $4$) a Poisson map taking the bracket 
on $H^s(M,\CC^{\ell})$ to the Lie-Poisson bracket on the dual $\mathfrak s_{(\ell)}^*$ 
of the semidirect product Lie algebra $\mathfrak s_{(\ell)}$.
\end{theorem}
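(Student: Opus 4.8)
The plan is to prove Theorem~\ref{thm:m-madmoment} by a direct generalization of the single-component argument in Theorem~\ref{thm:madmoment}, exploiting the fact that for $\ell$ components the action \eqref{eq:m-grpact} acts diagonally on each $\psi_k$ and the momentum map ${\bf M}^{(\ell)}$ is built additively from the individual components. The key observation is that the symplectic structure on $H^s(M,\CC^\ell)$ is the orthogonal direct sum of $\ell$ copies of the structure on $H^s(M,\CC)$, with Poisson bracket $\{F,G\}(\tilde\psi) = \sum_{k=1}^\ell \Re\inner{\ud_k F, J\,\ud_k G}_{L^2}$, where $\ud_k$ denotes the variation in the $k$-th slot. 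This means the Hamiltonian vector field of $H_\xi$ decomposes componentwise, and the computation reduces to $\ell$ parallel copies of the one-component estimate.

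\textbf{First I would} set up the pairing. For $\xi=(v,\tilde\alpha)\in\mathfrak s_{(\ell)}$ with $\tilde\alpha=(\alpha_1,\dots,\alpha_\ell)$, the pairing of $\xi$ with $(m,\tilde\rho)\in\mathfrak s_{(\ell)}^*$ is $\langle (v,\tilde\alpha),(m,\tilde\rho)\rangle = \int_M \big(\sum_{k=1}^\ell \rho_k\alpha_k + m\cdot v\big)\vol$, so that the Hamiltonian $H_\xi(\tilde\psi)=\langle {\bf M}^{(\ell)}(\tilde\psi),\xi\rangle$ becomes
\begin{equation*}
H_\xi(\tilde\psi) = \Re\int_M \sum_{k=1}^\ell \big( \bar\psi_k\psi_k\alpha_k - 2\ii\,\iota_v\bar\psi_k\,\ud\psi_k \big)\vol.
\end{equation*}
Next I would differentiate $H_\xi$ in each slot $\psi_k$ against a test function $\phi_k$, integrating by parts exactly as in the proof of Theorem~\ref{thm:madmoment}. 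Since the infinitesimal action \eqref{eq:m-grpact} on the $k$-th component is
\begin{equation*}
V_\xi(\tilde\psi)_k = -\tfrac{1}{2}\psi_k\divv(v) - \tfrac{\ii}{2}\alpha_k\psi_k - \iota_v\ud\psi_k,
\end{equation*}
the single-component calculation carries over verbatim to each $k$, yielding $\ud_k H_\xi(\tilde\psi) = 2\psi_k\alpha_k - 2\ii\psi_k\divv(v) - 4\ii\,\iota_v\ud\psi_k$ and hence $X_{H_\xi}(\tilde\psi)_k = 4V_\xi(\tilde\psi)_k$ for every $k$. This establishes the momentum map property up to the scaling factor $4$.

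\textbf{The hard part will be} the bookkeeping of the Lie-Poisson and equivariance statement, rather than any genuinely new estimate. To conclude that ${\bf M}^{(\ell)}$ is a Poisson map I would verify infinitesimal equivariance, i.e.\ $H_{[\xi,\eta]}=\{H_\xi,H_\eta\}$ for all $\xi,\eta\in\mathfrak s_{(\ell)}$; this requires spelling out the bracket of the semidirect product $\mathfrak s_{(\ell)}=\Xcal(M)\ltimes H^s(M)^\ell$, namely $[(v,\tilde\alpha),(w,\tilde\beta)] = ([v,w], \iota_v\ud\tilde\beta - \iota_w\ud\tilde\alpha)$ acting diagonally on the $\ell$ scalar slots. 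Because both the action and the momentum map respect the direct-sum decomposition into components, the equivariance identity follows component-by-component from the single-component case already verified in \cite{Fu2017}, with the only genuinely coupled term being the shared vector-field part $v$ acting through $\iota_v\ud\psi_k$; I would check that the cross terms combine correctly into the semidirect-product bracket. Once infinitesimal equivariance is in hand, the fact that ${\bf M}^{(\ell)}$ intertwines the scaled Poisson bracket on $H^s(M,\CC^\ell)$ with the Lie-Poisson bracket on $\mathfrak s_{(\ell)}^*$ is a standard consequence (cf.\ the discussion following Theorem~\ref{thm:madmoment}), completing the proof.
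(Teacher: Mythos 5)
Your proposal is correct and takes essentially the same approach the paper intends: the paper states \autoref{thm:m-madmoment} without a separate proof, presenting it as a "multicomponent version" of \autoref{thm:madmoment}, and your componentwise reduction—using that the Hamiltonian $H_\xi$, the symplectic structure on $H^s(M,\CC^\ell)$, and the semidirect-product bracket all decompose diagonally over the $\ell$ slots sharing the single vector field $v$—is precisely the fill-in of that argument, including the correct scaling $X_{H_\xi}=4V_\xi$ and the component-by-component verification of infinitesimal equivariance.
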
%\todo[author=KM]{Check details carefully.}
\begin{remark} \label{rmk:SUsemidirect} 
More generally, for any subgroup $G$ of $\mathrm{U}(\ell)$ one has an action on $H^s(M,\CC^\ell)$
of the semidirect product $\widetilde S_{(\ell)}=\mathrm{Diff}(M)\ltimes H^s(M,G)$ 
of diffeomorphisms $\varphi$ with $G$-valued $H^s$-functions $A\in H^s(M,G)=H^s(M)\otimes G$ 
on $M$. It is given by 
$$ 
(\varphi, A): \tilde \psi \mapsto   \sqrt{|\mathrm{Det}(D\varphi^{-1})|}\, A (\tilde \psi \circ \varphi^{-1}). 
$$ 
In particular, if $\ell=2$ (or $\ell=4$) the subgroup $G=\SU(\ell)$ acts by rotation of spinors 
(this may have some relevance for hydrodynamic formulations of the Pauli (or Dirac) equations). 
Note that the action of $\widetilde S_{(\ell)}$ preserves the Hermitian and symplectic structures 
on $H^s(M,\CC^\ell)$ and admits a momentum map.
\end{remark} 
\begin{remark} 
From the viewpoint of Hamiltonian dynamics specifying a larger $\ell$ 
(and considering the corresponding semi-direct product groups $S_{(\ell)}$) 
corresponds to ``exploring a larger chunk'' of the phase space 
$T^*\Diff(M)\simeq \Diff(M)\times \Xcal(M)^*$ (cf.\ next section). 
Indeed, for $\ell=1$ the corresponding equations on $T^*\Dens^s(M)$ only allows for momenta 
of the form $m=\rho\,\ud\theta$ (corresponding to potential-type solutions of the barotropic Euler equations). 
By choosing $\ell>0$ we allow for momenta of the form $m=\sum_{k=1}^\ell \rho_k\,\ud\theta_k$ 
thus filling out a larger portion of $\Xcal(M)^*$.
\end{remark}
% 

%%%%%%%%%%%%%%%%

\subsection{Example: general compressible fluids}

For general compressible (non\-baro\-tropic) inviscid fluids the equation of state describes the pressure 
as a function $P(\rho, \sigma)$ of both density $\rho$ and entropy $\sigma$. 
Thus, the corresponding equations of motion include the evolution of all three quantities: 
the velocity $v$ of the fluid, its density $\rho$ and the entropy $\sigma$. 
\begin{equation*} 
\left\{
 \begin{array}{l}
			\dot v + \nabla_v v + \frac{1}{\rho}\nabla P(\rho, \sigma) = 0 \,, \\
			\dot\rho + \divv(\rho v) = 0 \,,\\
                         \dot \sigma +\LieD_v \sigma=0\,.\\
\end{array} \right. 
%\qquad ({\rm barotropic~fluid~equation})\,.
\end{equation*} 
In the case the entropy is constant or the pressure is independent of $\sigma$ 
this system describes a barotropic flow, see equations \eqref{eq:barotropic2}. 
Note that, while the density evolves as an $n$-form, the entropy evolves as a function. 
However, according to the continuity equation, passing to the entropy density 
$\varsigma=\sigma\rho$ one can regard the corresponding group as the semidirect product
$S_{(2)}=\Diff(M)\ltimes (H^s(M)\oplus H^s(M))$, 
which leads to a Hamiltonian picture on the dual $\mathfrak s_{(2)}^*$. 
By applying the multicomponent Madelung transform ${\bf M}^{(2)}$ one can rewrite and interpret this system 
on the space of rank-1 spinors $H^s(M,\CC^2)$. 
Indeed, the evolution of the momentum $m\otimes\vol=v^\flat\otimes\vol$ is 
\begin{equation*}
\dot m \otimes\vol + \LieD_v(m\otimes\vol) 
+ 
\ud \frac{\delta U}{\delta \rho} \otimes \rho\,\vol 
+ 
\ud \frac{\delta U}{\delta \varsigma} \otimes \varsigma\,\vol 
= 0. 
\end{equation*} 
Observe that an invariant subset of solutions is given by those with momenta 
$m=\rho\,\ud\theta+\varsigma\,\ud\tau$, 
where $\theta,\tau \in H^s(M)/\RR$. 
They can be regarded as analogs of potential solutions of the barotropic fluid equations. 
We thereby obtain a canonical set of equations on $T^*H^s(M)^2$ given by 
\begin{equation*} 
\left\{
\begin{array}{lcl}
	\dot\rho = \frac{\delta H}{\delta\theta} & 	\dot\varsigma = \frac{\delta H}{\delta\tau} \\
	\dot\theta = -\frac{\delta H}{\delta\rho} &\quad	\dot\tau = -\frac{\delta H}{\delta\varsigma} 
\end{array} \right. 
\end{equation*} 
% 
%\begin{align*}
%	\dot\rho = \frac{\delta H}{\delta\theta} &\qquad
%	\dot\varsigma = \frac{\delta H}{\delta\tau} \\
%	\dot\theta = -\frac{\delta H}{\delta\rho} &\qquad
%	\dot\tau = -\frac{\delta H}{\delta\varsigma} 
%\end{align*}
for a Hamiltonian of the form 
\begin{equation*}
H(\rho,\varsigma,\theta,\tau) 
= 
\frac{1}{2}\int_M (\abs{\nabla\theta}^2\rho + \abs{\nabla\tau}^2\varsigma)\vol + U(\rho,\varsigma). 
\end{equation*} 
Using the multicomponent Madelung transform 
\begin{equation}
(\rho,\varsigma,\theta,\tau) \mapsto \Big( \sqrt{\rho}\ee^{\ii\theta/2}, \sqrt{\varsigma}\ee^{\ii\tau/2} \Big), 
\end{equation} 
and \autoref{thm:m-madmoment} 
this gives (up to scaling by 4) a Hamiltonian system for the spinor 
$\tilde\psi = (\psi_1,\psi_2)\in H^s(M,\CC^2)$ 
with the Hamiltonian given by 
\begin{equation*} 
H(\tilde\psi) = \frac{1}{2} \big\| \nabla\tilde\psi \big\|_{L^2}^2 + W\big( \abs{\psi_1}^2,\abs{\psi_2}^2 \big). 
\end{equation*} 
for the potential 
\begin{equation*}
W(\rho,\varsigma) 
= 
- \frac{1}{2} \big\| \nabla \sqrt{\rho} \big\|_{L^2}^{2} 
- 
\frac{1}{2} \big\| \nabla \sqrt{\varsigma} \big\|_{L^2}^{2} 
+ 
\frac{1}{4}U(\rho,\varsigma), 
\end{equation*} 
where the functional $U$ is related to the pressure function $P(\rho,\sigma)$ 
of the compressible Euler equation. 
%involving the Fisher information functionals
The corresponding Schr\"{o}dinger equation reads 
\begin{align*} 
\ii\dot{\tilde\psi} 
= 
-\Delta \tilde\psi 
+ 
\begin{pmatrix}\frac{\delta W}{\delta\rho}&0 \\ 0&\frac{\delta W}{\delta\varsigma}\end{pmatrix}\tilde\psi. 
\end{align*} 

Conversely, one can work backwards to obtain a fluid formulation of various quantum-mechanical
spin Hamiltonians, such as the Pauli equations for spin $1/2$ particles of a given charge.

%\todo[author=KM]{I think the Pauli example is quite nice (I didn't find a hydrodynamical formulation of it before). I'll see if I can write the full equations, or maybe this is actually a separate short paper.}

%On the other hand, if we chose the Pauli spin Hamiltonian on $H^s(M,\CC^2)$ for a magnetic vector potential $\mathbf{A}\in \Xcal^{s+1}(M)$ and an electric scalar potential $\phi\in H^{s+1}(M)$
%\begin{equation*}
%	H(\tilde\psi)  = \frac{1}{2}\norm{\mathbf{\Sigma}\cdot(\nabla\tilde\psi-q\mathbf{A}\otimes\tilde\psi)}_{L^2}^2 	+ \frac{1}{2}\inner{q\phi,\abs{\psi_1}^2+\abs{\psi_2}^2}_{L^2} 
 % + \frac{1}{4}U(\abs{\psi_1}^2,\abs{\psi_2}^2)
%\end{equation*}
%we can work backwards to obtain a fluid formulation of the Pauli equations for spin $1/2$ particles of charge $q$. See also \autoref{rmk:SUsemidirect} above. (The rank-3 tensor $\mathbf{\Sigma}$ can be thought of as a row-vector containing the Pauli matrices.)

% wave vector-functions $\Psi^{(2)} $.

% \begin{remark}
% 	Derivation of the Pauli equation (in coordinates).
% 	\begin{align}
% 		\frac{1}{2m}\left( 
% 			[\sigma_x,\sigma_y,\sigma_z]\cdot[\partial_x-qA_x,\partial_y-qA_y,\partial_z-qA_z]
% 		\right)^2\tilde\psi
% 		\\
% 		= 
% 		\frac{1}{2m}\left( 
% 			\sigma_x(\partial_x-qA_x)+\sigma_y(\partial_y-qA_y)+\sigma_z(\partial_z-qA_z)
% 		\right)^2\tilde\psi
% 	\end{align}
% \end{remark}

\end{subsection}

%%%%%%%%%%%%%%%%% 
\subsection{Geometry of semi-direct product reduction} 
\label{sect:general-semi} 
In this section we present the geometric structure behind the semi-direct product reduction 
which reveals the origin of the Madelung transform as the moment map above. 

\newcommand{\subG}{N}
\newcommand{\algG}{\mathfrak{n}}

Let $\subG$ be a Lie subgroup of a Lie group $G$. 
Assume that $G$ acts from the left on a linear space $V$ (a left representation of $G$). 
The quotient space of left cosets $G/\subG$ is acted upon from the left by $G$. 
Assume now that $G/\subG$ can be embedded as an orbit in $V$ and 
let $\gamma\colon G/\subG \to V$ denote the embedding. 
Since the action of $G$ on $V$ induces a linear left dual action on $V^*$ 
we can construct the semi-direct product $S=G\ltimes V^*$. 
\begin{proposition} 
The quotient $T^*G/\subG$ is naturally embedded via a Poisson map in the Lie-Poisson space 
$\mathfrak{s}^*$ 
(the dual of the corresponding semi-direct product algebra). 
\end{proposition}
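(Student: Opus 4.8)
The plan is to write down an explicit map $\Psi\colon T^*G\to\mathfrak s^*$, show it is invariant under the reducing action, descends to the quotient, and is a Poisson embedding. Write $v_0=\gamma(\subG)\in V$ for the image of the base coset, so that $\subG=\{g\in G\mid g\cdot v_0=v_0\}$ is the stabilizer and $\gamma(g\subG)=g\cdot v_0$ identifies $G/\subG$ with the orbit $\mathcal O=G\cdot v_0\subset V$. Identify $\mathfrak s^*=\mathfrak g^*\oplus V$, where $V$ is paired with the $V^*$-summand of $\mathfrak s=\mathfrak g\ltimes V^*$. On $T^*G$ I use the canonical symplectic form and the two commuting cotangent-lifted actions of $G$ by left and right translations, with equivariant momentum maps the spatial momentum $J_L(\alpha_g)=(T_eR_g)^*\alpha_g$ and the body momentum $J_R(\alpha_g)=(T_eL_g)^*\alpha_g$. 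The reduction is by the right action of $\subG\subset G$, whose orbit space on $G$ is exactly $G/\subG$; assuming this action is free and proper (at least on the regular part), $T^*G/\subG$ inherits a Poisson structure as the quotient of a symplectic manifold by a symplectic action.

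Define $\Psi(\alpha_g)=\bigl(J_L(\alpha_g),\,g\cdot v_0\bigr)\in\mathfrak g^*\oplus V$. First I would check $\subG$-invariance: the spatial momentum $J_L$ is invariant under the right action because the left and right actions commute, and $g\cdot v_0$ is invariant because $(gn)\cdot v_0=g\cdot(n\cdot v_0)=g\cdot v_0$ for $n\in\subG$. Hence $\Psi$ descends to $\bar\Psi\colon T^*G/\subG\to\mathfrak s^*$. For injectivity, if $\Psi(\alpha_{g_1})=\Psi(\alpha_{g_2})$ then $g_2^{-1}g_1\cdot v_0=v_0$, so $g_1=g_2n$ with $n\in\subG$; equality of the spatial momenta then forces $\alpha_{g_1}$ to equal the image of $\alpha_{g_2}$ under the lifted right translation by $n$, since on each fiber $J_L$ is a linear isomorphism and so the base point together with the spatial momentum determines a covector uniquely. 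Thus $\alpha_{g_1}$ and $\alpha_{g_2}$ lie on the same $\subG$-orbit. Noting that for fixed $g$ the spatial momentum sweeps out all of $\mathfrak g^*$ while $g\cdot v_0$ sweeps out $\mathcal O$ (a dimension count also confirms this), $\bar\Psi$ is an embedding onto $\mathfrak g^*\times\mathcal O\subset\mathfrak s^*$.

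The heart of the argument is that $\Psi$ is Poisson; since the reduction projection $T^*G\to T^*G/\subG$ is a surjective Poisson submersion and $\Psi=\bar\Psi\circ(\text{proj})$, it suffices to check that $\Psi$ itself intertwines the canonical bracket on $T^*G$ with the Lie--Poisson bracket on $\mathfrak s^*$. I would test this on the linear coordinate functions $\ell_{(\xi,u)}(\mu,a)=\langle\mu,\xi\rangle+\langle u,a\rangle$ with $(\xi,u)\in\mathfrak s$, whose brackets already generate the Lie--Poisson structure. Here $\ell_{(\xi,u)}\circ\Psi=J_L^{\xi}+f_u$, where $J_L^{\xi}(\alpha_g)=\langle\alpha_g,\xi_G(g)\rangle$ generates the left $\mathfrak g$-action and $f_u(\alpha_g)=\langle u,g\cdot v_0\rangle$ is a configuration function. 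The canonical bracket then splits into four pieces: $\{J_L^{\xi_1},J_L^{\xi_2}\}=J_L^{[\xi_1,\xi_2]}$ by equivariance of the cotangent momentum map, reproducing the $\mathfrak g^*$-term $\langle\mu,[\xi_1,\xi_2]\rangle$; the two cross terms $\{J_L^{\xi_1},f_{u_2}\}=\langle u_2,\xi_1\cdot(g\cdot v_0)\rangle$, arising because the Hamiltonian flow of $J_L^{\xi_1}$ differentiates $f_{u_2}$ along the infinitesimal left action, which combine via the dual pairing into the semidirect cocycle term $\langle a,\xi_1\cdot u_2-\xi_2\cdot u_1\rangle$; and $\{f_{u_1},f_{u_2}\}=0$, since configuration functions Poisson-commute, matching the fact that $V^*$ is an abelian ideal of $\mathfrak s$. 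Collecting these recovers the semidirect-product Lie--Poisson bracket, up to the overall sign fixed by the left/right convention.

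The main obstacle is precisely this last bookkeeping: keeping the sign and pairing conventions consistent between the cotangent momentum map of the left action, the dual $\mathfrak g$-action on $V^*$, and the chosen sign of the Lie--Poisson bracket, so that the three surviving pieces assemble into the correct semidirect bracket rather than a near-miss. As a reassuring cross-check I would note that $\Psi$ is nothing but the momentum map of the combined Hamiltonian $S$-action on $T^*G$---the left $G$-translations together with the abelian action of $V^*$ by the fiber translations $\alpha_g\mapsto\alpha_g+\ud f_u(g)$---so that its Poisson property is automatic once one verifies that these two actions genuinely generate a semidirect $S$-action. In infinite dimensions one must, as elsewhere in the paper, restrict to the regular parts and smooth duals and assume $\mathcal O$ is embedded, which is exactly the content of the standing hypothesis that $\gamma$ is an embedding.
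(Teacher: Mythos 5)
Your proof is correct, and it takes a genuinely different route from the paper's. You argue upstairs on $T^*G$: define $\Psi(\alpha_g)=(J_L(\alpha_g),\,g\cdot v_0)$, check right-$N$-invariance and descent, prove injectivity onto $\mathfrak{g}^*\times\mathcal{O}$, and then verify the Poisson property directly by computing canonical brackets of the pulled-back linear functionals $J_L^{\xi}+f_u$. The paper argues downstairs: it writes the same map in the trivializations $T^*G/N\simeq G/N\times\mathfrak{g}^*$ and $\mathfrak{s}^*\simeq\mathfrak{g}^*\times V$, computes the coadjoint action of $S$ on $\mathfrak{s}^*$ and its infinitesimal version, observes that the image $\mathfrak{g}^*\times\gamma(G/N)$ is invariant under that action because the $V$-component is moved only by $G$ inside the orbit, and then identifies the embedding as a momentum map, using that the momentum map of $S$ acting on $\mathfrak{s}^*$ is the identity. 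Your closing cross-check --- $\Psi$ as the momentum map of the combined $S$-action on $T^*G$ by cotangent-lifted left translations together with fiber translations $\alpha_g\mapsto\alpha_g+\mathrm{d}f_u(g)$, which do assemble into an $S$-action since $f_u\circ L_{h^{-1}}=f_{h\cdot u}$ --- is the closest point of contact with the paper's reasoning. What your route buys: it is self-contained, it supplies the bracket verification that the paper leaves implicit (the paper never actually checks that the quotient Poisson structure on $T^*G/N$ coincides with the Lie-Poisson structure induced on the image), and it treats injectivity and the embedding property explicitly. What the paper's route buys: the explicit $\mathrm{Ad}^*$ and $\mathrm{ad}^*$ formulas on $\mathfrak{s}^*$, which are reused immediately afterwards for the zero-momentum reduction and for embedding $T^*(G/N)$ as a symplectic leaf. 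One point worth making precise in your write-up: with the spatial momentum $J_L$ one gets $\{J_L^{\xi_1},J_L^{\xi_2}\}=-J_L^{[\xi_1,\xi_2]}$, and since the dual action satisfies $\langle u,\xi\cdot a\rangle=-\langle\xi\cdot u,a\rangle$ the cross terms carry the same minus sign; hence all three pieces flip coherently and $\Psi$ is a Poisson map onto $\mathfrak{s}^*$ equipped with the \emph{minus} Lie-Poisson bracket --- a single consistent overall sign, as you anticipated, reflecting the left/right-coset convention, which the paper likewise leaves unfixed.
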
 
\begin{proof} 
The Poisson embedding is given by 
\begin{equation}\label{eq:semi_direct_map} 
([g],m) \mapsto (m,\gamma([g])) 
\end{equation} 
where we use that 
$T^*G/\subG \simeq G/\subG \times \mathfrak{g}^*$ 
and 
$\mathfrak{s}^* \simeq \mathfrak{g}^*\times V$. 
Now, the action of $S$ on $\mathfrak{s}^*$ is 
\begin{equation} 
(g,a)\cdot (m,b) 
= 
\mathrm{Ad}^*_{(g,a)}(m,b) 
= 
\big( \mathrm{Ad}^*_{g}(m) - {\mathcal M}(a,b), g\cdot b \big), 
\end{equation} 
where $ {\mathcal M}\colon V^*\times V \to \mathfrak{g}^*$ is the momentum map 
associated with the cotangent lifted action of $G$ on $V^*$. 
The corresponding infinitesimal action of $\mathfrak{s}$ is
\begin{equation} 
\label{eq:inf_ham_action} 
(\xi,\dot a)\cdot (m,b) 
= 
\mathrm{ad}^*_{(\xi,\dot a)}(m,b) 
= 
\big( \mathrm{ad}^*_{\xi}(m) - {\mathcal M}(\dot a,b), \xi\cdot b \big). 
\end{equation} 
Since the second component is only acted upon by $g$ (or $\xi$), but not $a$ (or $\dot a$), 
it follows from the embedding of $G/\subG$ as an orbit in $V$ that we have a natural Poisson action 
of $S$ (or $\mathfrak{s}$) on $T^*G/\subG$ via the Poisson embedding~\eqref{eq:semi_direct_map}. 
Notice that the momentum map of $S$ (or $\mathfrak{s}$) acting on $\mathfrak{s}^*$ is the identity: 
this follows since the Hamiltonian vector field on $\mathfrak{s}^*$ 
for $H(m,b) = \pair{m,\xi} + \pair{b,\dot a}$ is given by~\eqref{eq:inf_ham_action}. 
\end{proof} 

We now return to the standard symplectic reduction (without semi-direct products). 
The dual $\algG^*$ of the subalgebra $\algG\subset \mathfrak{g}$ is naturally identified with 
affine cosets of $\mathfrak{g}^*$ such that 
\begin{equation}
m \in [m_0] \iff \pair{m-m_0,\xi} = 0 \qquad \forall\, \xi\in\algG. 
\end{equation} 
The momentum map of the subgroup $\subG$ acting on $\mathfrak{g}^*$ by $\Ad^*$ 
is then given by 
%\begin{equation}\label{eq:zero_mom_reduction_mom_map}
$m \mapsto [m],$ 
%\end{equation}
since the momentum map of $G$ acting on $\mathfrak{g}^*$ is the identity. 
If $\pair{m,\algG} = 0$, i.e., $m\in (\mathfrak{g}/\algG)^*$, then $m\in [0]$ is in the zero momentum coset. 
% Noticing that $T^*(G/H)\simeq G/H\times (\mathfrak{g}/\mathfrak{h})^*$, the mapping \eqref{eq:zero_mom_reduction_mom_map} then 
Since we also have $T^*(G/\subG) \simeq G/\subG\times (\mathfrak{g}/\algG)^*$ 
this gives us an embedding as a symplectic leaf in $T^*G/\subG\simeq G/\subG\times \mathfrak{g}^*$. 
% \begin{equation}
% 	([g],m) \mapsto ([g],m).
% \end{equation}
The restriction to this leaf is called \emph{zero-momentum symplectic reduction}. 

Turning to the semi-direct product reduction, we now have Poisson embeddings of 
$T^*(G/\subG)$ in $T^*G/\subG$ and of $T^*G/\subG$ in $\mathfrak{s}^*$. 
The combined embedding of $T^*(G/\subG)$ as a symplectic leaf in $\mathfrak{s}^*$ 
is given by the map 
\begin{equation}\label{eq:embed} 
([g],a)\mapsto ( {\mathcal M}(a,\gamma([g])),\gamma([g])) 
\end{equation} 
This implies that we have a Hamiltonian action of $S$ (or $\mathfrak{s}$) 
on the zero-momentum symplectic leaf $T^*(G/\subG)$ sitting inside $T^*G/\subG$, 
which in turn sits inside $\mathfrak{s}^*$.
%Notice that the embedding $T^*(G/H)\to \mathfrak{s}^*$ is given by $([g],a)\mapsto (\Phi(a,\gamma([g])),\gamma([g]))$.

Since $S$ has a natural symplectic action on $\mathfrak{s}^*$ and since $G/\subG$ 
is an orbit in $V \simeq V^{**}$, we have, by restriction, a natural action of $S$ on $T^*G/\subG$. 
Furthermore, since the momentum map associated with $S$ acting on $\mathfrak{s}^*$ is the identity, 
the Poisson embedding map \eqref{eq:semi_direct_map} is the momentum map for $S$ 
acting on $T^*G/\subG$. 
Thus, the momentum map of $S$ acting  on $T^*(G/\subG)$ is given by~\eqref{eq:embed}. 

The above considerations are summarized in the following theorem.
\begin{theorem} 
The inverse of the Madelung transform viewed as a momentum map 
(Section \ref{sec:madelung-moment}) can be regarded as the semi-direct product reduction 
and a Poisson embedding $T^*(G/\subG) \to \mathfrak{s}^*$ 
as described above for the groups $G=\Diff(M)$ and $\subG=\Diffvol(M)$.
\end{theorem}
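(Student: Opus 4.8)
The plan is to match the concrete data of Section~\ref{sec:madelung-moment} with the abstract template of the preceding discussion and then simply invoke the universal embedding~\eqref{eq:embed}. First I would take $G=\Diff(M)$ and $\subG=\Diffvol(M)$, so that the orbit map $\pi\colon\phi\mapsto\phi_*\vol$ identifies the homogeneous space $G/\subG$ with the density space $\Dens^s(M)$; this is exactly Moser's description of $\Dens^s(M)$ as a single $\Diff(M)$-orbit of the reference volume. Accordingly I would let $V=\Omega^n(M)$ be the space of densities, on which $\Diff(M)$ acts linearly from the left by push-forward $\phi\cdot\nu=\phi_*\nu$, and let $\gamma\colon G/\subG\to V$ be the resulting inclusion $[\phi]\mapsto\phi_*\vol=\rho$. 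Dualizing the pairing $\langle\nu,a\rangle=\int_M a\,\nu$ identifies $V^*$ with $H^s(M)$, the induced $\Diff(M)$-action on $V^*$ being $a\mapsto a\circ\phi^{-1}$. Hence the semidirect product of the template is $S=G\ltimes V^*=\Diff(M)\ltimes H^s(M)$ with Lie algebra $\mathfrak{s}=\Xcal(M)\ltimes H^s(M)$, precisely the group and algebra of \autoref{thm:madmoment}.

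Second, I would compute the momentum map $\mathcal{M}\colon V^*\times V\to\g^*$ of the cotangent-lifted $\Diff(M)$-action on $V^*$. Since $V^*$ is linear one has $T^*V^*\simeq V^*\times V$, and the infinitesimal generator of $v\in\Xcal(M)=\g$ on $a\in V^*$ is $-\iota_v\ud a$; the cotangent-lift formula then gives $\langle\mathcal{M}(a,\rho),v\rangle=-\int_M\rho\,\iota_v\ud a\,\vol$, that is $\mathcal{M}(\theta,\rho)=\rho\,\ud\theta$ (up to sign) as an element of $\Xcal(M)^*=\g^*$. Substituting into~\eqref{eq:embed} yields $([\phi],\theta)\mapsto(\mathcal{M}(\theta,\rho),\rho)=(\rho\,\ud\theta,\rho)$ with $\rho=\gamma([\phi])$, which is verbatim the formula for the inverse Madelung transform ${\bf M}$ of~\eqref{eq:mad} and the coadjoint-orbit embedding $(\rho,[\theta])\mapsto(\rho\,\ud\theta,\rho)$ of $T^*\Dens^s(M)$ into $\mathfrak{s}^*$ noted at the end of Section~\ref{sec:madelung-moment}. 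Thus the inverse Madelung map is recovered as this special case of semidirect-product reduction, and its momentum-map property reduces to the general fact that~\eqref{eq:semi_direct_map} is the momentum map of $S$ acting on $T^*G/\subG$.

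The step that requires genuine care, and where I expect the main obstacle, is matching the zero-momentum symplectic reduction of the template with $T^*\Dens^s(M)$. Abstractly the fiber of $T^*(G/\subG)$ is $(\g/\algG)^*$, the annihilator of $\algG=\Xcalvol(M)$ (the divergence-free fields) inside $\g^*$; but this description is literal only over the reference density, while over a general $\rho$ the correct fiber is the image $\{\rho\,\ud\theta\}$ of $a\mapsto\mathcal{M}(a,\rho)$. I would reconcile the two by checking directly that $\rho\,\ud\theta$ annihilates exactly the vertical directions, i.e. the $\rho$-preserving fields $v$ with $\divv(\rho v)=0$: integrating by parts,
\[
\langle\rho\,\ud\theta,v\rangle=\int_M\rho\,\iota_v\ud\theta\,\vol=-\int_M\theta\,\divv(\rho v)\,\vol,
\]
which vanishes precisely on that subspace. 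This pins down $\{\rho\,\ud\theta\}$ as the $\rho$-dependent zero-momentum leaf and confirms that~\eqref{eq:embed} restricts correctly to $T^*\Dens^s(M)$. The remaining points---tracking the left/right and sign conventions, the factor of $4$ already present in \autoref{thm:madmoment}, and the half-density interpretation of $H^s(M,\CC)$ linking the symplectic space of wave functions to the Lie--Poisson space $\mathfrak{s}^*$---are bookkeeping that I would relegate to remarks once the structural identification above is in place.
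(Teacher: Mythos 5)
Your proposal is correct and is essentially the paper's own argument: the theorem is stated there as a summary of the general semi-direct-product framework developed just before it, and your proof is exactly its specialization to $G=\Diff(M)$, $N=\Diffvol(M)$, $V$ the space of densities and $V^*=H^s(M)$, under which the universal embedding \eqref{eq:embed} becomes $(\rho,[\theta])\mapsto(\rho\,\ud\theta,\rho)$, i.e.\ the inverse Madelung transform of \eqref{eq:mad}. The details you supply beyond the paper's text --- the cotangent-lift computation giving $\mathcal{M}(\theta,\rho)=\rho\,\ud\theta$ (up to sign) and the integration-by-parts check that the zero-momentum fiber over a general $\rho$ is the annihilator $\{\rho\,\ud\theta\}$ of the $\rho$-divergence-free fields, rather than the fixed annihilator of $\Xcalvol(M)$ --- are carried out correctly and fill in precisely what the paper leaves implicit.
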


%%%%%%%%%%%%%%%%%%%%%%%%%%%%%%%%%%%%%%%%%%

\medskip

\appendix
%%%%%%%%%%%%%%%%%%%%%
\section{The functional-analytic setting} \label{sec:analysis} 

The infinite-dimensional geometric constructions in this paper can be rigorously carried out 
in any reasonable function space setting in which the topology is at least as strong as $C^1$, 
satisfies the functorial axioms of Palais~\cite{Pa1968} and admits a Hodge decomposition. 
The choice of the Sobolev spaces is very convenient for the purposes of this paper 
because many of the technical details which were used (explicitly or implicitly) in the proofs 
can be readily traced in the literature. 
We briefly review the main points below. 
 
As introduced in the main body of the paper the notation $\Diff^s(M)$ stands for the completion of 
the group of smooth $C^\infty$ diffeomorphisms of an $n$-dimensional compact Riemannian manifold $M$ 
with respect to the $H^s$ topology where $s>n/2+1$. This puts the Sobolev lemma at our disposal and 
thus equipped $\Diff^s(M)$ becomes a smooth Hilbert manifold whose tangent space at the identity 
$T_e\Diff^s(M)$ consists of all $H^s$ vector fields on $M$, see e.g., \cite{EbMa1970}, Section 2. 

Using the implicit function theorem the subgroup 
$\Diff^{s}_\vol(M) = \{ \eta \in \Diff^{s}(M) : \eta^\ast \vol = \vol \}$ 
consisting of those diffeomorphisms that preserve the Riemannian volume form $\vol$ 
can then be shown to inherit the structure of a smooth Hilbert submanifold 
with $T_e\Diff^s_\vol(M) = \{ v \in T_e\Diff^s : \mathrm{div}\, v = 0 \}$, 
cf. e.g., \cite{EbMa1970}, Sections 4 and 8. 

Standard results on compositions and products of Sobolev functions ensure that 
both $\Diff^{s}$ and $\Diff^{s}_\vol$ 
are topological groups with right translations $\xi \to \xi\circ\eta$ (resp., left translations $\xi \to \eta\circ\xi$ 
and inversions $\xi \to \xi^{-1}$) being smooth (resp., continuous) as maps in the $H^s$ topology, 
cf. \cite{Pa1968}; Chapters 4 and 9. 
Furthermore, the natural projection 
$$ 
\pi : \Diff^{s+1}(M) \to \Diff^{s+1}(M)/\Diff^{s+1}_\vol(M) 
\simeq 
\mathrm{Dens}^s(M) 
$$ 
given by $\eta \to \pi(\eta) = \eta^\ast \vol$ 
extends to a smooth submersion between $\Diff^{s+1}(M)$ and the space of right cosets 
which can be identified with the space of probability densities on $M$ of Sobolev class $H^{s}$ 
(cf.\ Section \ref{sec:madelung} above). 
More technical details, as well as proofs of all these facts, can be found in \cite{EbMa1970, Pa1968} and their bibliographies.

%%%%%%%%%%%%%%
\section{A comment on rescaling constants} 

%Let us fix a symplectic structure on the cotangent bundle of the space of densities. 
First, recall that for any $(\dot\rho,[\dot\theta]) \in T_{(\rho,[\theta])}T^*\Dens^s(M)$ 
we can pick a representative $\dot\theta\in[\dot\theta]$ such that 
$\int_M \dot\theta\rho\,\vol = 0$. 
The canonical symplectic structure on $T^\ast\Dens^s$ is then given by  
\begin{equation*} 
\Omega_{(\rho,[\theta])}\big((\dot\rho_1,\dot\theta_1),(\dot\rho_2,\dot\theta_2)\big) 
= 
\int_M  \Big( \dot\theta_2\dot\rho_1 - \dot\theta_1\dot\rho_2 \Big)\vol. 
\end{equation*} 
Furthermore, for any $\alpha > 0$ we have a complex structure 
\begin{equation*} 
J_{(\rho,[\theta])}(\dot\rho,\dot\theta) 
= 
\left( -\alpha\dot\theta\rho, \frac{1}{\alpha}\frac{\dot\rho}{\rho} \,\right). 
\end{equation*} 
Combining the two structures in a standard manner yields a K\"{a}hler metric on $T^\ast\Dens^s(M)$ 
\begin{align*} 
\MetF^*_{(\rho,[\theta])}\big((\dot\rho_1,\dot\theta_1),(\dot\rho_2,\dot\theta_2)\big) 
&= 
\Omega_{(\rho,[\theta])}\big((\dot\rho_1,\dot\theta_1), J_{(\rho,[\theta])}(\dot\rho_2,\dot\theta_2)\big) 
\\ 
&= 
\int_M \left( \frac{1}{\alpha}\frac{\dot\rho_1\dot\rho_2}{\rho} 
+ 
\alpha \dot\theta_1\dot{\theta}_2\rho \right)\vol. 
\end{align*} 
% 

%\todo{G: Check the constants in the main paper and compare with PNAS version.} 

Next, we turn to the Madelung transform which, for a fixed constant $\gamma \neq 0$, 
is
\begin{equation}\label{eq:mad_with_constants} 
\psi = \sqrt{\rho \ee^{\ii\theta/\gamma}} 
\end{equation} 
and whose derivative is
\begin{equation}\label{eq:mad_deriv} 
\dot\psi 
= 
\frac{\psi}{2}\left( \frac{\dot\rho}{\rho} + \frac{\ii}{\gamma}\dot\theta \right). 
\end{equation} 
% 
%Let
%\begin{equation}
%	\pair{\psi_1,\psi_2}_{L^2} = \int_M \psi_1\overline{\psi_2}\,\vol.
%\end{equation}
Given $\beta > 0$ the (scaled) Fubini-Study Hermitian structure on $\mathbb PC^\infty(M,\CC)$ 
is given by 
\begin{equation} \label{eq:H-beta} 
\FS_{\psi}(\dot\psi_1,\dot\psi_2) 
= 
\beta\frac{\inner{ \dot\psi_1,\dot\psi_2}_{L^2}}{\inner{\psi,\psi }_{L^2}} 
- 
\beta\frac{\inner{\dot\psi_1,\psi}_{L^2}\inner{\psi,\dot\psi_2}_{L^2}}{\inner{\psi,\psi}_{L^2}^2} 
\end{equation} 
where $\inner{\phi,\psi}_{L^2}= \int_M\phi \overline\psi \vol$. 
If $\dot\psi_1,\dot\psi_2$ are of the form \eqref{eq:mad_deriv}, then it follows from
$\int_M \rho\,\vol = 1$ and $\int_M\dot\rho_i\vol = \int_M\dot\theta_i\rho\,\vol = 0$ 
that $\norm{\psi}_{L^2} = 1$ and $\inner{\dot\psi_i,\psi}_{L^2} = 0$.
In this case 
\begin{align*} 
\FS_{\psi}(\dot\psi_1,\dot\psi_2) 
&= 
\beta\inner{\dot\psi_1,\dot\psi_2}_{L^2} 
= 
\frac{\beta}{4}\int_M \abs{\psi}^2 \left(\frac{\dot\rho_1}{\rho} 
+ 
\frac{\ii}{\gamma}\dot\theta_1 \right)\left(\frac{\dot\rho_1}{\rho} - \frac{\ii}{\gamma}\dot\theta_1 \right) \vol 
\\
&= 
\frac{\beta}{4\gamma} \int_M \left\{ 
\left(\gamma\frac{\dot\rho_1}{\rho}\dot\rho_2 
+ 
\frac{\dot{\theta}_1\dot{\theta}_2}{\gamma}\rho \right) 
- 
\ii \big( \dot\theta_2\dot{\rho}_1 - \dot\theta_1\dot{\rho}_2 \big) \right\} \vol
\end{align*} 
and the associated symplectic structure is 
\begin{align*} 
\hat\Omega_{\psi}(\dot\psi_1,\dot\psi_2) 
&= 
\Re\, \FS_{\psi}(\ii\dot\psi_1,\dot\psi_2) 
= 
-\Im\, \FS_{\psi}(\dot\psi_1,\dot\psi_2) 
\\ 
&= 
\frac{\beta}{4\gamma}\int_M \big( \dot\theta_2\dot\rho_1 - \dot\theta_1\dot\rho_2 \big)\vol 
= 
\frac{\beta}{4\gamma}\Omega_{(\rho,[\theta])}(\dot\rho_1,\dot\theta_1,\dot\rho_2,\dot\theta_2). 
\end{align*} 
Thus, the Madelung transform as defined by \eqref{eq:mad_with_constants} is a symplectomorphism 
up to a rescaling by the constant $\beta/4\gamma$. 

Similarly, the Riemannian metric associated with \eqref{eq:H-beta} is 
\begin{align*} 
\hat \MetF_{\psi}^*(\dot\psi_1,\dot\psi_2) 
&= 
\Re\, \FS_{\psi}(\dot\psi_1,\dot\psi_2)  
\\ 
&= 
\frac{\beta}{4\gamma}\int_M \left(\gamma\frac{\dot\rho_1 \dot\rho_2}{\rho} 
+ 
\frac{\dot{\theta}_1\dot{\theta}_2}{\gamma}\rho \right) \vol.
% = \frac{\beta}{4\gamma}\Omega_{(\rho,[\theta])}(\dot\varrho_1,\dot\theta_1,\dot\varrho_2,\dot\theta_2). 
\end{align*} 
Thus, to make the Madelung transform defined by \eqref{eq:mad_with_constants} an isometry 
(up to rescaling by $\beta/4\gamma$) we require that $\alpha=\gamma$. 
If, in addition, it is to be a K\"{a}hler morphism, then we also require $\beta = 4\gamma$. 
Note that in this paper we set $\gamma=1$ while in his original work Madelung used $\gamma = \hbar/2$ 
(as did \citet{Re2012}).

\IfFileExists{/Users/moklas/Documents/Papers/References.bib}{
\bibliographystyle{amsplainnat}
\bibliography{/Users/moklas/Documents/Papers/References} 
}{
\IfFileExists{madelung.bbl}{
\def\cprime{$'$}

}{}
}
 
\end{document}